\documentclass[11pt]{article}
\usepackage[a4paper, total={6in, 8in}]{geometry}

\usepackage{url}
\usepackage{graphicx,wrapfig,lipsum}
\usepackage[charter]{mathdesign}
\usepackage[T1]{fontenc}

\usepackage{caption}
\usepackage{amsmath,amsthm,amsfonts}
\usepackage{faktor}
\usepackage{tikz}
\usepackage{enumitem}
\usepackage{comment}
\usepackage{lipsum}
\usepackage{authblk}
\makeatletter
\title{The intersections of the lower central series and the subgroups of finite $p$-index of Generalized Baumslag-Solitar tree groups}
\author{V. Metaftsis} 
\author{D. Tsipa\thanks{
	The research work was supported by the Hellenic Foundation for
			Research Innovation (HFRI) under the 3rd Call for HFRI PhD
			Fellowships.  (Fellowship Number: 5161) \\  \protect\includegraphics[width=0.17\textwidth]{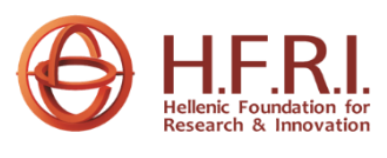} 	}} 
\affil[ ]{Department of Mathematics, University of the Aegean, Karlovassi, 832 00 Samos, Greece}
\affil[ ]{\textit {vmet@aegean.gr, dtsipa@aegean.gr}}
\date{}
\usepackage[overload]{empheq}
\theoremstyle{theorem} 
\newtheorem{theorem}{Theorem}[section]
\newtheorem{corollary}[theorem]{Corollary}
\newtheorem{lemma}[theorem]{Lemma}
\newtheorem*{lemma*}{Lemma}
\newtheorem*{theorem*}{Theorem}
\newtheorem{proposition}[theorem]{Proposition}
\newtheorem*{proposition*}{Proposition}
\theoremstyle{definition}

\newtheorem*{example*}{Example}    

\newtheorem*{remark*}{Remark}

\newcommand{\Z}{\mathbb{Z}}

\newcommand{\om}{\omega}

\def\g{\gamma}
\def\d{\delta}
\def\a{\alpha}
\def\b{\beta}
\def\g{\gamma}
\def\e{\epsilon}

\def\f{\phi}
\def\om{\omega}
\def\k{\kappa}
\def\l{\lambda}
\def\<{\langle}
\def\>{\rangle}
\newcommand\restrict[1]{\raisebox{-.5ex}{$|$}_{#1}}
\newcommand{\bigslant}[2]{{\raisebox{.2em}{$#1$}\left/\raisebox{-.2em}{$#2$}\right.}}

\begin{document}
	
	\maketitle
	\begin{abstract}
	For a Generalized Baumslag-Solitar group $G$ with underling graph a tree, we calculate the intersection $\gamma_{\om}(G)$ of the lower central series and the intersection $(N_{p})_{\om}(G)$ of the subgroups of finite index some power of a prime $p$.
	\end{abstract}
\section{Introduction}
Generalized Baumslag-Solitar groups (GBS-groups) are fundamental groups of finite graphs of groups with infinite cyclic vertex and edge groups. These groups have interesting group-theoretic properties and have been the subject of recent research (see for example \cite{Delga, Fore,LevittA, Robi}).

In the case where the underlying graph is a simple loop we have the Baumslag-Solitar groups which are the $1$-relator groups
$$BS(m,n)=\langle t, a \hspace{1mm}|\hspace{1mm}ta^{m}t^{-1}=a^{n}\rangle.$$
where $m, n$ are non-zero integers. 

 Baumslag-Solitar groups seem to have first appeared in the literature in \cite{BS} as they were defined by Gilbert Baumslag and Donald Solitar in order to provide examples of non-Hopﬁan groups. These groups have played a really central r\^ole in combinatorial and geometric group theory. In several situations they have provided examples which mark boundaries between different classes of groups and also they often provide a test for various theories.
 
 	 Let $G$ be a group. We say that $G$  is residually a $\mathcal{P}$ group if for every element $1 \neq g \in G$ there is a homomorphism $f$ from $G$ to a group with the  property $\mathcal{P}$ such that $f(g)$ is not trivial. 
 	 
 	  For elements $a,b$ of $G$, we write $[a,b]$ for the commutator of $a$ and $b$, that is $[a,b]=a^{-1}b^{-1}ab$. For subgroups $A$ and $B$ of $G$ we write $[A,B]=\langle [a,b]\hspace{1mm}:\hspace{1mm}a\in A, b\in B\rangle$. For a positive integer $i$, let $\g_{i}(G)$ be the $i-$th term of the lower central series of $G$. We denote by $\gamma_{\omega}(G)$ the intersection of all terms of the lower central series of $G$, that is $\gamma_{\omega}(G)=\bigcap\limits_{i\geq 1} \g_{i}(G)$. A group $G$ is residually nilpotent if and only if $\gamma_{\omega}(G)$ is trivial.

 	 We denote the intersection of all finite index normal subgroups of $G$ with index some power of a prime number $p$ by $(N_{p})_\omega(G)$. A group is residually a finite $p$-group if and only if $(N_{p})_\omega(G)$ is trivial. 
 	 
 	 In his semial work in \cite{Gruen},  Gruenberg proved various results concerning the residual properties of groups. Also, in \cite{Mold} (see also \cite{Mold-ru}) Moldavanskii calculated the intersection $(N_{p})_\omega(G)$ for a Baumslag-Solitar group $G$ while in \cite{KMP} Kofinas, Metaftsis and Papistas calculated the intersection $\gamma_{\om}(G)$. Also, Sokolov, in \cite{sok} provides criteria for a GBS group to be residually nilpotent,  residually torsion-free nilpotent and residually free..
 
 In the present work we calculate the intersection $\gamma_{\om}(G)$ and the intersection $(N_{p})_{\om}(G)$ for a GBS tree group,  that is a GBS group with underlying graph is a tree. In fact, in our main result Theorem \ref{amalgamatedprod} we prove that $\g_{\om}(G)$ is generated by $\g_{\om}(H_{ij})$ for every $H_{ij}$ a subgroup of $G$ generated by any two vertex groups $\<x_i,x_j\>$. We prove a similar result also for $(N_p)_{\om}(G)$ (Theorem \ref{Npw_tree}).
 
 Although the final generating sets are rather complicated, our results show that both $\g_{\om}(G)$ and $(N_p)_{\om}(G)$ are generated in a rather natural way.  Finally,  at the end of the paper we provide an appendix with a specific example for which we demonstrate the generating sets for both $\gamma_{\om}(G)$ and $(N_3)_{\om}(G)$.

	\section{Definitions and auxiliary results}

	Let $G$ be a group. A $G$-tree is a tree with a $G$-action by automorphisms, without inversions. A $G$-tree is proper if every edge stabilizer is a subgroup of the vertex stabilizers for the vertices that correspond to this edge. It is minimal if there is no proper $G$-invariant subtree and cocompact if the quotient graph is finite. The quotient of the action is the graph of groups with vertex groups the vertex stabilizers and edge groups the edge stabilizers.

	Given a $G$-tree $X$, an element $g\in G$ is called elliptic if it stabilizes a vertex of $X$ and hyperbolic otherwise. If $g$ is hyperbolic then there is a unique $G$-invariant line in $X$, the axis of $g$, on which $g$ acts as a translation. A subgroup $H$ of $G$ is elliptic if it fixes a vertex.
	
	A generalized Baumslag-Solitar tree is a $G$-tree whose vertex stabilizers and edge stabilizers are all infinite cyclic. Such a group $G$ is called generalized Baumslag-Solitar group (GBS group). The quotient graph of groups has all vertex and edge groups isomorphic to $\Z$ and the inclusion maps are multiplication by various non-zero integers.
	
	The group $G$ is torsion free since any torsion group intersects each conjugate of a vertex group trivially which implies that is free and therefore trivial.  Moreover, all elliptic elements are commensurable, i.e. they have a common power.
	
	GBS-groups are presented by labeled graphs, that is finite graphs where each oriented edge $e$ has label $\lambda_e\neq 0$. If $\Gamma$ is a finite graph with vertex and edge groups isomorphic to $\Z$ then we have a graph of groups with fundamental group a GBS group.  A presentation for a GBS group $G$ can be obtained as follows. Choose a maximal tree $T\subset \Gamma$. There is one generator $g_v$ for every vertex of $\Gamma$ and one relator $t_e$ for every edge in $\Gamma\setminus T$ (non-oriented). Each non-oriented edge contributes  a relation in the presentation. The relations are
	$$(g_{\alpha(e)})^{\lambda_e}=(g_{\alpha(\overline{e})})^{\lambda_{\overline{e}}}\ \ \mbox{if}\ \ \  e\in T$$
	$$t_e(g_{\alpha(e)})^{\lambda_e}t_e^{-1}=(g_{\alpha(\overline{e})})^{\lambda_{\overline{e}}}\ \   \mbox{if}\ \ e\in\Gamma\setminus T$$
	The above is the standard presentation of $G$ and the generating set is the standard generating set associated to $\Gamma$ and $T$.
	
	Notice that the group $G$ represented by $\Gamma$ does not change if we change the sign of all labels near a vertex $v$ or the labels carried by a non-oriented edge. These are called admissible sign changes. Notice also that there may be infinitely many labeled graphs representing the same $G$. If $\Gamma$ is a loop, then $G$ is the known Baumslag-Solitar group.
	In case $\Gamma$ is a tree we will call $G$ a generalized Baumslag-Solitar tree group.

	We will now present some auxiliary results that will help us calculate the intersections $\gamma_{\omega}(G)$ and $(N_{p})_{\om}(G)$ in the case where the underlying graph of the GBS is a segment on which is the basis for calculating the general case of the GBS tree group. 

	Let $Z(G)$ be the center of a group $G$.
	\begin{lemma}\label{intersectionGBS}
	Let $G$ be a GBS group, where the underlying graph is a tree $X$. Then $G'\cap Z(G)$ is trivial.
	\end{lemma}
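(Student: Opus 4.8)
The plan is to realise $G$ as an amalgamated free product over an edge group of the tree and to combine a ``the centre lies in the edge group'' argument with an explicit homomorphism to $(\mathbb{Q},+)$ built from the tree. If $G$ is infinite cyclic then $G'=1$ and the claim is immediate, so I assume $G$ is non-cyclic. Since $X$ is a tree, I choose an edge $e$ whose removal splits $X$ into two subtrees $T_1,T_2$; writing $A,B$ for the GBS subgroups carried by $T_1,T_2$ and $C=\<c\>$ for the (infinite cyclic) edge group, this gives $G=A*_{C}B$. Using Tietze transformations I may eliminate any vertex joined to the rest of the tree by a unit-labelled edge, i.e.\ one where the edge group coincides with a vertex group; this shrinks the tree without changing $G$, so after finitely many such reductions either $G$ becomes infinite cyclic (and we are done) or the chosen separating edge is nondegenerate, meaning $C\neq A$ and $C\neq B$ with both inclusions proper.

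Next I locate the centre. Let $G$ act on the Bass--Serre tree of the splitting $A*_{C}B$, with $\tilde v_A,\tilde v_B$ the base vertices stabilised by $A,B$. For $z\in Z(G)$ and any $a\in A$ we have $a(z\tilde v_A)=z(a\tilde v_A)=z\tilde v_A$, so $A$ fixes $z\tilde v_A$; since $C\neq A$, the subgroup $A$ is the full stabiliser of $\tilde v_A$ and fixes no other vertex, whence $z\tilde v_A=\tilde v_A$ and $z\in A$. Symmetrically $z\in B$, so $z\in A\cap B=C$. Thus $Z(G)\subseteq C=\<c\>$, where $c=x_v^{\l_e}$ for an endpoint $v$ of $e$.

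Finally I build a homomorphism detecting $c$. Because $X$ is a tree I may root it, set the value at the root vertex to $1$, and propagate values $q_v\in\mathbb{Q}$ along edges by the rule $q_{\al(\overline{e})}=(\l_e/\l_{\overline{e}})\,q_{\al(e)}$; acyclicity guarantees consistency, so the assignment $x_v\mapsto q_v$ respects each defining relation $x_{\al(e)}^{\l_e}=x_{\al(\overline{e})}^{\l_{\overline{e}}}$ and extends to a homomorphism $\phi\colon G\to(\mathbb{Q},+)$ with every $q_v\neq 0$. In particular $\phi(c)=\l_e\,q_v\neq 0$. Since $\mathbb{Q}$ is abelian, $G'\subseteq\ker\phi$; and since $\phi(c)\neq 0$ with $C\cong\Z$, the map $\phi$ is injective on $C$, hence on $Z(G)\subseteq C$. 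Therefore $G'\cap Z(G)\subseteq\ker\phi\cap Z(G)=\{1\}$, as required.

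I expect the main obstacle to be the centre-location step in the presence of degenerate edges: one must ensure that a nondegenerate separating edge actually exists (or that $G$ is cyclic), and this is precisely where the tree hypothesis enters twice --- once to supply an amalgam decomposition forcing $Z(G)\subseteq C$, and once to construct $\phi$ with no cycle obstruction. For a loop, that is an ordinary Baumslag--Solitar group, the product of the labels around the cycle obstructs the existence of such a $\phi$, which is exactly why the conclusion may fail outside the tree case.
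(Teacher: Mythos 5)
Your proof is correct and is essentially the paper's argument: the homomorphism $\phi\colon G\to(\mathbb{Q},+)$ obtained by propagating the labels along the tree is, after clearing denominators, exactly the paper's map $\f\colon G\to\langle y\rangle\cong\Z$, $x_{v}\mapsto y^{r/r_{v}}$, and both proofs conclude by observing that this map kills $G'$ while being injective on the cyclic elliptic subgroup containing $Z(G)$. The only real difference is cosmetic: you locate $Z(G)$ inside an edge group via Bass--Serre theory and a reduction of degenerate edges, whereas the paper locates it inside the vertex groups by raising the defining relations to common powers.
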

\begin{proof}
Choose an orientation for $X$. Then the group $G$ has the following presentation. 
	$$G=\< x_{v_{i}} \hspace{1mm}|\hspace{1mm}  x_{\a(e_{j})}^{\l_{e_{j}}}= x_{\om(e_{j})}^{\bar{\l}_{{e}_{j}}} \>, \hspace{1mm} v_{i}\in V(X), e_{j}\in E(X)$$
	Raising the relations of $G$ to appropriate powers we can find integers $r_{v_{i}}$ such that $x_{v_{i}}^{r_{v_{i}}}=x_{v_{i'}}^{r_{v_{i'}}}$ for all $v_{i},v_{i'}\in V(X)$ . Then $Z(G)=\< x_{v_{i}}^{r_{v_{i}}}\>$,
		that is, the center of $G$ is a subgroup of each vertex group $G_{v_{i}}=\< x_{v_{i}}\>$. 
		  Notice that in the case where $v_{i}$ and $v_{i'}$ are adjacent vertices  incident with an edge $e$ then $r_{v_{i}}=\l_{e}N_{e}$ and $r_{v_{i'}}=\bar{\l}_{{e}}N_{e}$ for the same integer $N_{e}$.
 
Let $r=lcm(r_{v_{i}})$, $v_{i}\in V(X)$, and $H$ be the infinite cyclic group $\<y\>\cong \mathbb{Z}$. We define the map $\f: G \to H$ with $\f(x_{v_{i}})= y^{\frac{r}{r_{v_{i}}}}$, for all $v_{i}\in V(X)$. The map $\f$ is a homomorphism since it preserves the relations of $G$ (it follows from the previous observation) and every vertex group embeds in to $\<y\>$.  Since $G'\subseteq Ker\f$,  the result follows. 
\end{proof}
\begin{lemma}\label{intersectionfinite}
	Let $G$ be the fundamental group of a graph of groups $(\mathcal{G},X)$ where $X$ is a tree with vertex groups finite cyclic $p$-groups. Then $G'\cap Z(G)$ is trivial.
\end{lemma}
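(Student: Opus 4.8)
The plan is to mimic the proof of Lemma~\ref{intersectionGBS}: present $G$, pin down its centre as a single cyclic $p$-subgroup, and then produce a homomorphism onto an \emph{abelian} $p$-group that is injective on the centre. Concretely, since $X$ is a tree, $G$ has no stable letters and is an iterated amalgamated product of the vertex groups $G_{v_i}=\<x_{v_i}\>\cong\Z/p^{n_i}$ along the cyclic edge groups, with presentation $G=\<x_{v_i}\mid x_{\a(e)}^{\l_e}=x_{\om(e)}^{\bar\l_e},\ x_{v_i}^{p^{n_i}}=1\>$. Raising the edge relations to suitable powers, exactly as in Lemma~\ref{intersectionGBS}, produces integers $r_{v_i}$ and a common element $w=x_{v_i}^{r_{v_i}}$, the same for every $i$; being a common power of the (abelian) vertex generators, $w$ commutes with every generator and hence $\<w\>\subseteq Z(G)$. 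I would then record that $\<w\>$ is cyclic of order $p^{s}$ for some $s$ (possibly $s=0$, in which case the statement is immediate).

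For the reverse inclusion I would use the amalgamated-product structure furnished by the tree. Deleting an edge $e$ splits $X$ into two subtrees and exhibits $G$ as an amalgam $G=G_1*_{C_e}G_2$ over the edge group $C_e$; assuming the graph is reduced (so each $C_e$ is a proper subgroup of its two vertex groups, hence of both factors), the centre of an amalgam satisfies $Z(G)\subseteq C_e$. Intersecting over all edges gives $Z(G)\subseteq\bigcap_e C_e=\<w\>$, so in fact $Z(G)=\<w\>$. Equivalently, one sees on the Bass--Serre tree that a central element must be elliptic, hence lies in the kernel of the action, which is $\<w\>$.

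It remains to build an abelian quotient injective on $\<w\>$. Set $N=\max_i n_i$ and let $H=\<y\>\cong\Z/p^{N}$. I would look for a homomorphism $\f\colon G\to H$ of the form $\f(x_{v_i})=y^{\,p^{N-n_i}d_i}$, where each $d_i$ is prime to $p$. The order relations are then automatic, since $\f(x_{v_i})^{p^{n_i}}=y^{p^{N}}=1$. Writing $\l_e=p^{n_i-m_e}u_e$ and $\bar\l_e=p^{n_j-m_e}\bar u_e$ for the edge $e=(v_i,v_j)$, with $u_e,\bar u_e$ coprime to $p$ and $p^{m_e}=|C_e|$, the edge relation is preserved precisely when $d_i u_e\equiv d_j\bar u_e\pmod{p^{m_e}}$. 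Here is the one genuinely new point compared with the GBS case: in $\Z$ the only units are $\pm1$ and these congruences are trivial, whereas modulo $p^{m_e}$ they are real constraints. Because $X$ is a tree I can satisfy them all at once: root $X$, set $d_{v_0}=1$, and propagate outward by solving $d_j\equiv d_i u_e\bar u_e^{-1}\pmod{p^{m_e}}$ along each edge and lifting to a unit modulo $p^{n_j}$. The absence of cycles guarantees consistency, and the invertibility of $\bar u_e$ modulo $p^{m_e}$ guarantees solvability.

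Finally, $H$ is abelian, so $G'\subseteq\ker\f$; and expressing $w$ as a power of $x_{v_0}$ one computes $\f(w)=y^{\,p^{N-s}v}$ with $v$ prime to $p$, which has order exactly $p^{s}$, so $\f$ is injective on $Z(G)=\<w\>$. Therefore $G'\cap Z(G)\subseteq\ker\f\cap\<w\>=\{1\}$, as required. I expect the crux of the argument to be exactly the well-definedness of $\f$ into the finite cyclic $p$-group: one must reconcile the edge-label units $u_e,\bar u_e$ while simultaneously keeping $\f(w)$ of full order $p^{s}$, and it is precisely the tree hypothesis that makes this balancing possible.
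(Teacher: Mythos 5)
Your proposal is correct and follows the same overall strategy as the paper: exhibit a homomorphism $\f$ from $G$ onto a finite cyclic $p$-group $H=\<y\>\cong\Z_{p^{N}}$ that kills $G'$ and embeds each vertex group (hence the centre), the paper taking $\f(x_{v_i})=y^{p^{N}/p^{n_i}}$ and restricting to a vertex group of maximal order. The one substantive difference is your unit correction factors $d_i$, and this is a genuine improvement rather than a cosmetic one: the paper justifies that the edge relation $x_{\a(e)}^{\l_e}=x_{\om(e)}^{\bar\l_e}$ is preserved only by observing that the two images have the same order, which shows they generate the same subgroup of $H$ but not that they are equal, and without the units this can actually fail (e.g.\ for $x^{3}=z^{6}$ with both vertex groups $\Z_{9}$, the naive map sends the two sides to $y^{3}$ and $y^{6}$). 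Your congruences $d_iu_e\equiv d_j\bar u_e\pmod{p^{m_e}}$, solved by rooting the tree and propagating outward, repair exactly this point, and the absence of cycles is correctly identified as what makes the system consistent. Your identification of $Z(G)$ via the amalgam decompositions is also more careful than the paper's bare assertion; and even if one does not insist that $Z(G)$ equals $\<w\>$ on the nose, your $\f$ is injective on every vertex group (since each $d_i$ is a unit), so injectivity on the centre follows from $Z(G)$ being contained in a vertex group. The argument is complete.
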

\begin{proof}
	Assume that $Z(G)$ is not trivial (otherwise the result is obvious). Since  $G$ is the fundamental group of a tree of groups where the vertex groups are finite cyclic $p$-groups then the center $Z(G)$ is a non-trivial cyclic subgroup $\langle x_{v_{i}}^{r_{i}} \rangle$ of each edge group. If $G_{v_{i}}= \langle x_{v_i} \rangle$ we have that $Z(G)=\<x_{v_{i}}^{m_{i}}\>$ for all $v_{i}\in V(X)$.
	
	The group $G$ has the following presentation. 
	$$G=\< x_{v_{i}} \hspace{1mm}|\hspace{1mm} x_{v_{i}}^{p^{k_{i}}}=1, x_{\a(e_{j})}^{\l_{e_{j}}}= x_{\om(e_{j})}^{\bar{\l}_{{e}_{j}}} \>, \hspace{1mm} v_{i}\in V(X), e_{j}\in E(X)$$
	
	Let $k=max\{k_{i}\}$ and thus for some $v_{r}\in V(X)$ we have  $ord(x_{v_{r}})=p^{k}$. Let $H$ be the group $H=\< y\>$ with $ord(y)=p^{k}$, and thus $\< y\>\cong \mathbb{Z}_{p^{k}}$. Therefore $\<x_{v_{r}}\>\cong \< y\>$. Each vertex group $\<  x_{v_{i}} \>$ can be embedded to the group $H$ and it's image will be a subgroup of order $d_{i}$, where $d_{i}$ is a divisor of $p^{k}$, i.e. $d_{i}\in \{p^{\a}\}_{\a=1}^{r}$. Therefore we can define the map $\f : G \to H $ such that $\f(x_{v_{i}})=y^{\frac{p^{k}}{p^{k_{i}}}}$, for all $v_{i}\in V(X)$. The map $\f$ is a homomorphism, since it preserves all relations. Indeed $\f(x_{v_{i}}^{p^{k_{i}}})=y^{p^{k}}=1$, for all $v_{i}\in V(X)$. Moreover, notice that the relation $x_{\a(e_{j})}^{\l_{e_{j}}}= x_{\om(e_{j})}^{\bar{\l}_{{e}_{j}}}$ is preserved since $ord(x_{\a(e_{j})}^{\l_{e_{j}}})=ord(x_{\om(e_{j})}^{\bar{\l}_{{e}_{j}}})$ and due to the fact that there is a one-to-one correspondence between the subgroups of a finite cyclic group and their order. 
	
	Now assume that there is a non trivial element $z\in Z(G)$ such that $z\in G'$. We have that $z=(x_{v_{r}})^{s}$, for some $s\in \mathbb{Z}$. We have that $G'\subseteq Ker\f$ and thus $z\in Ker\f$. Let $\f\restrict{{G_{v_{r}}}}$ be the restriction of $\f$ to the group  $G_{v_{r}}=\< x_{v_{r}} \>$. Then $z\in Ker\f\restrict{{G_{v_{r}}}}$, which is a contradiction, since the map $\f\restrict{{G_{v_{r}}}}$ that maps $x_{v_{r}}$ to $y$ is an isomorphism. 
\end{proof}
	\begin{lemma}\label{Lemma0}
	Let $G$ be a group and $N$ be a normal subgroup of $G$. Then $\gamma_{i}(G/N)=\g_{i}(G)N/N$, for every positive integer $i$.
\end{lemma}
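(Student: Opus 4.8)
The plan is to argue by induction on $i$, using the canonical projection $\pi\colon G\to G/N$ together with the general principle that the lower central series is compatible with surjective homomorphisms. Throughout I will use that $\gamma_i(G)$ is characteristic, hence normal, in $G$, so that $\gamma_i(G)N$ is a (normal) subgroup of $G$ and the quotient $\gamma_i(G)N/N$ makes sense; note also that $\gamma_i(G)N/N = \pi(\gamma_i(G))$.

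For the base case $i=1$, I would simply observe that $\gamma_1(G/N)=G/N$, while $\gamma_1(G)N/N = GN/N = G/N$, since $N\leq G$ forces $GN=G$. Thus both sides coincide. For the inductive step, assume $\gamma_i(G/N)=\gamma_i(G)N/N$. By definition $\gamma_{i+1}(G/N)=[\gamma_i(G/N),G/N]$, and substituting the inductive hypothesis rewrites this as $[\pi(\gamma_i(G)),\pi(G)]$.

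The key step is the commutator–image identity $\pi([A,B])=[\pi(A),\pi(B)]$, valid for any surjective homomorphism $\pi$ and subgroups $A,B\leq G$. This holds because $\pi([a,b])=[\pi(a),\pi(b)]$ for all $a,b$, so the generating commutators of the two sides correspond under $\pi$, and a homomorphism carries the subgroup generated by a set precisely to the subgroup generated by the images. Applying this with $A=\gamma_i(G)$ and $B=G$ gives
$$[\pi(\gamma_i(G)),\pi(G)]=\pi\big([\gamma_i(G),G]\big)=\pi\big(\gamma_{i+1}(G)\big)=\gamma_{i+1}(G)N/N,$$
which closes the induction and proves the claim. The only point demanding any care—and it is hardly an obstacle—is this commutator–image identity; once it is in hand, the result is immediate.
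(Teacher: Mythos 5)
Your proposal is correct and follows essentially the same route as the paper: induction on $i$, with the inductive step reducing to the fact that commutators of generating cosets correspond under the projection. The paper spells out the identity $\pi([A,B])=[\pi(A),\pi(B)]$ as an explicit double inclusion on generators rather than invoking it as a named principle, but the content is identical.
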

\begin{proof}
	We will prove this Lemma using induction on $i$.
	
	For $i=1$ we have that $\gamma_{1}(G/N)=G/N=\g_{1}(G)N/N$. Now assume that for some $i>1$ we have that $\gamma_{i}(G/N)=\g_{i}(G)N/N$.
	
	Then $\g_{i+1}(G/N)=[\g_{i}(G/N), G/N]$ which by the inductive hypothesis is equal to \\ $[\g_{i}(G)N/N,G/N]$ $=$ $\langle [aN,bN]\hspace{1mm} :\hspace{1mm} a\in \g_{i}(G), b\in G \rangle$. Let $[aN,bN]\in\g_{i+1}(G/N)$, $a\in \g_{i}(G),$ $ b\in G$. Since  $[aN,bN]=[a,b]N\in \g_{i+1}(G)N/N$ we have that $\gamma_{i+1}(G/N)$ $\subseteq \g_{i+1}(G)N/N$.
	
	For the converse we have $\g_{i+1}(G)N/N=[\g_{i}(G),G]N/N$ $=$ $\langle [a,b] \hspace{1mm}: \hspace{1mm} a\in \g_{i}(G), b\in G\rangle N/N$. Let $[a,b]N\in \g_{i+1}(G)N/N$. Thus $[a,b]N=[aN,bN]$, where $a\in \g_{i}(G), b\in G$. Therefore $[a,b]N\in \gamma_{i+1}(G/N)$ and consequently we have the equality $\gamma_{i+1}(G/N)=\g_{i+1}(G)N/N$.
\end{proof}
	\begin{lemma}\label{Lemma1}
		Let $G$ be the fundamental group of a graph of groups where the vertex groups are all either infinite cyclic groups (and thus $G$ is a GBS group) or finite cyclic $p$-groups. Then $\gamma_{\omega}(G)\cong\gamma_{\omega}(G/Z(G))$.
	\end{lemma}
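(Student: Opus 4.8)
The plan is to realize $\gamma_\omega(G/Z(G))$ as the image of $\gamma_\omega(G)$ under the natural projection $\pi\colon G\to G/Z(G)$ and to show that this projection is injective on $\gamma_\omega(G)$. Write $Z=Z(G)$. The two ingredients I would assemble are: (i) the fact, supplied by Lemma \ref{intersectionGBS} or Lemma \ref{intersectionfinite} according to whether the vertex groups are infinite cyclic or finite cyclic $p$-groups, that $G'\cap Z$ is trivial; and (ii) the identity $\gamma_i(G/Z)=\gamma_i(G)Z/Z$ from Lemma \ref{Lemma0}, which upon intersecting over all $i$ gives $\gamma_\omega(G/Z)=\bigcap_{i}\bigl(\gamma_i(G)Z/Z\bigr)$.

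For injectivity: since $\gamma_\omega(G)\subseteq\gamma_2(G)=G'$, the kernel of $\pi$ restricted to $\gamma_\omega(G)$ is $\gamma_\omega(G)\cap Z\subseteq G'\cap Z=1$. Hence $\pi$ maps $\gamma_\omega(G)$ isomorphically onto $\pi(\gamma_\omega(G))=\gamma_\omega(G)Z/Z$. By the first isomorphism theorem it then suffices to prove the equality $\gamma_\omega(G)Z/Z=\gamma_\omega(G/Z)$.

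One inclusion is immediate: as $\gamma_\omega(G)\subseteq\gamma_i(G)$ for every $i$, we get $\gamma_\omega(G)Z/Z\subseteq\gamma_i(G)Z/Z=\gamma_i(G/Z)$ for all $i$, and hence $\gamma_\omega(G)Z/Z\subseteq\gamma_\omega(G/Z)$. The reverse inclusion is where the hypothesis $G'\cap Z=1$ does the real work, and I expect this to be the main obstacle. Take $gZ\in\gamma_\omega(G/Z)$; then $g\in\gamma_i(G)Z$ for every $i$, so for each $i\geq 2$ we may write $g=c_iz_i$ with $c_i\in\gamma_i(G)$ and $z_i\in Z$. For any $i,j\geq 2$ both $c_i,c_j$ lie in $\gamma_2(G)=G'$, so $c_i^{-1}c_j\in G'$; on the other hand the equation $c_iz_i=c_jz_j$ gives $c_i^{-1}c_j=z_iz_j^{-1}\in Z$. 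Thus $c_i^{-1}c_j\in G'\cap Z=1$, forcing all the $c_i$ (for $i\geq 2$) to coincide with a single element $c$. This $c$ lies in $\gamma_i(G)$ for every $i\geq 2$, hence in $\gamma_\omega(G)$, and $g=cz_2\in\gamma_\omega(G)Z$. Therefore $\gamma_\omega(G/Z)\subseteq\gamma_\omega(G)Z/Z$.

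Combining the two inclusions gives $\gamma_\omega(G)Z/Z=\gamma_\omega(G/Z)$, and together with the injectivity of $\pi$ on $\gamma_\omega(G)$ this yields $\gamma_\omega(G)\cong\gamma_\omega(G/Z)$, as required. The only care needed is the bookkeeping that intersection commutes with passage to $G/Z$ for the descending chain of subgroups $\gamma_i(G)Z$, all of which contain $Z$; this becomes routine once the coincidence of the representatives $c_i$ has been established.
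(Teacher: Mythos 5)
Your proof is correct and follows essentially the same route as the paper: both rest on Lemma \ref{Lemma0} together with the triviality of $G'\cap Z(G)$ supplied by Lemmas \ref{intersectionGBS} and \ref{intersectionfinite}. In fact your argument that the representatives $c_i$ all coincide, which yields $\bigcap_{i}\gamma_{i}(G)Z(G)=\gamma_{\omega}(G)Z(G)$, makes explicit the final step that the paper compresses into ``the result now follows from the definition of $\gamma_{\omega}(G)$''.
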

\begin{proof}
	Since $Z(G)$ is a normal subgroup of $G$ by Lemma \ref{Lemma0} we know that $\gamma_{i}(G/Z(G))=\g_{i}(G)Z(G)/Z(G)$ for every positive integer $i$.

	The group $\gamma_{i}(G/Z(G))=\g_{i}(G)Z(G)/Z(G)$ from the second isomorphism theorem for groups is isomorphic to $\g_{i}(G)/(\g_{i}(G)\cap Z(G))$.
	
	Now for each $i>1$, since the center of $G$ is contained in every vertex group, we have that $\g_{i}(G)\cap Z(G)\subseteq \g_{i}(G)\cap G_{v}\subseteq \g_{2}(G)\cap G_{v}$, where $G_{v}$ is the abelian vertex group of $G$. But from Lemma  \ref{intersectionGBS} and Lemma \ref{intersectionfinite} we have that $\g_{2}(G)\cap G_{v}$ is trivial and thus $\g_{i}(G)\cap Z(G)$ is trivial. 
	
	The result now follows from the definition of $\gamma_{\omega}(G)$.
\end{proof}
Let $p$ be a prime number. Using the above Lemma, we will prove that the fundamental group of a tree of finite cyclic $p$-groups is residually nilpotent and hence residually finite-$p$. The equivalence of these two properties follows from the following Proposition.
\begin{proposition}[see \cite{Varsos}]\label{Varsos}
	Let ($\mathcal{G},\Gamma$) be a graph of groups, where $\Gamma$ is a connected finite graph with vertex groups $G_{v}$, $v\in V(X)$, finite $p$-groups. The fundamental group $G=\pi_1(\mathcal{G},\Gamma)$ is residually finite-$p$ if and only if it is residually nilpotent. 
\end{proposition}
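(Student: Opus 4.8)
My plan is to treat the two implications separately. The forward implication is immediate and holds for an arbitrary group: every finite $p$-group is nilpotent, so any residually finite-$p$ group is automatically residually nilpotent. The whole content lies in the converse, so I would assume $G$ is residually nilpotent, i.e. $\gamma_{\omega}(G)=1$, and aim to deduce that it is residually finite-$p$. First note that $G$ is finitely generated, being generated by the finitely many (finite) vertex groups together with one stable letter for each edge outside a spanning tree of $\Gamma$.

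Since $\bigcap_{c}\gamma_{c}(G)=\gamma_{\omega}(G)=1$, the natural map $G\hookrightarrow\prod_{c\geq 1}G/\gamma_{c}(G)$ is injective, and each quotient $Q_{c}:=G/\gamma_{c}(G)$ is a finitely generated nilpotent group. Because residual properties are transitive, it suffices to show that every $Q_{c}$ is residually finite-$p$: given $1\neq g\in G$ one chooses $c$ with $g\notin\gamma_{c}(G)$, maps $G$ onto $Q_{c}$ keeping the image of $g$ nontrivial, and then maps $Q_{c}$ onto a finite $p$-group detecting that image. For a finitely generated nilpotent group this last step is possible precisely when its torsion subgroup is a $p$-group (a finitely generated nilpotent group is residually finite-$p$ if and only if it has no nontrivial element of order prime to $p$). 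Thus the proposition reduces to the claim that \emph{for every $c$ the torsion subgroup of $Q_{c}=G/\gamma_{c}(G)$ is a finite $p$-group}.

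To prove the claim I would pass to the associated graded Lie ring $\mathrm{gr}(G)=\bigoplus_{i\geq 1}\gamma_{i}(G)/\gamma_{i+1}(G)$. Its degree-one part is $G^{\mathrm{ab}}$, which for our $G$ splits as $\mathbb{Z}^{b}\oplus T$ with $T$ a finite abelian $p$-group: the free part comes from the stable letters, while every finite-order contribution comes from the abelianizations of the vertex groups and the identifications forced by the edge groups, all of which are finite $p$-groups. Since $\mathrm{gr}(G)$ is generated as a Lie ring in degree one, each component $\gamma_{i}/\gamma_{i+1}$ is a quotient of the degree-$i$ part of the free Lie ring on $G^{\mathrm{ab}}$, and hence of the tensor power $(G^{\mathrm{ab}})^{\otimes i}$; as the torsion of $(\mathbb{Z}^{b}\oplus T)^{\otimes i}$ is $p$-primary, so is that of $\gamma_{i}/\gamma_{i+1}$. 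If now $x\in Q_{c}$ had order prime to $p$, I would take $i$ maximal with $x\in\gamma_{i}(Q_{c})$; its image in $\gamma_{i}(Q_{c})/\gamma_{i+1}(Q_{c})\cong\gamma_{i}(G)/\gamma_{i+1}(G)$ would be nontrivial of order prime to $p$, contradicting the $p$-primality of the torsion there. Hence the torsion of $Q_{c}$ is a $p$-group, which proves the claim and the proposition.

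The main obstacle is exactly this claim. Its two ingredients, that the torsion of $G^{\mathrm{ab}}$ is $p$-primary and that this property is inherited by every lower central factor, are precisely where the hypothesis that all vertex groups are finite $p$-groups is used; without it the nilpotent quotients could acquire torsion coprime to $p$ and fail to be residually finite-$p$. An alternative to the graded-Lie-ring bookkeeping would be to invoke Gruenberg's criterion comparing the lower central and lower $p$-central series of a finitely generated group, but one would still have to verify the same $p$-primary torsion condition directly from the graph-of-groups structure.
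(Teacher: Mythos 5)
The paper offers no proof of this proposition at all --- it is quoted from Varsos's article --- so your argument has to stand on its own. The forward implication, the reduction to the nilpotent quotients $Q_c=G/\gamma_c(G)$, and the appeal to Gruenberg's characterisation (a finitely generated nilpotent group is residually finite-$p$ iff its torsion is $p$-primary) are all fine, as is the fact that $\gamma_i(G)/\gamma_{i+1}(G)$ is an epimorphic image of $(G^{\mathrm{ab}})^{\otimes i}$ via iterated commutators. The gap is the very next inference: ``as the torsion of $(\mathbb{Z}^{b}\oplus T)^{\otimes i}$ is $p$-primary, so is that of $\gamma_i/\gamma_{i+1}$.'' Being a quotient of a group with $p$-primary torsion does not bound the torsion of the quotient: $\mathbb{Z}\twoheadrightarrow \mathbb{Z}/q$ for any prime $q$. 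So as soon as $b>0$ --- that is, as soon as $\Gamma$ has an edge outside a spanning tree and $G^{\mathrm{ab}}$ has positive free rank --- the tensor power $(G^{\mathrm{ab}})^{\otimes i}$ contains free summands (e.g.\ $\mathbb{Z}^{b}\otimes\cdots\otimes\mathbb{Z}^{b}$), and nothing in your argument prevents the relations of $\mathrm{gr}(G)$ from collapsing such a summand onto torsion coprime to $p$. Knowing the generators of the graded Lie ring is not enough; one must control the leading forms of the graph-of-groups relators and show they never produce non-$p$ torsion in the lower central factors. That is exactly the work your argument skips, and it is where Varsos's proof (or, as you note, a comparison of the lower central and lower $p$-central series) earns its keep.

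It is worth recording what your argument \emph{does} prove: if $\Gamma$ is a tree, then $G^{\mathrm{ab}}$ is a quotient of $\bigoplus_v G_v^{\mathrm{ab}}$ and hence a finite $p$-group, so every $(G^{\mathrm{ab}})^{\otimes i}$ and every quotient thereof is a finite $p$-group, each $Q_c$ is a finite $p$-group, and $G$ residually nilpotent implies $G$ residually finite-$p$ with no further input. Since the present paper only ever invokes Proposition \ref{Varsos} for fundamental groups of trees of finite cyclic $p$-groups (in Propositions \ref{segment} and \ref{ptree}), your argument would suffice for the paper's purposes; it just does not prove the proposition as stated for an arbitrary connected finite graph.
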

The residual nilpotency of the fundamental group of finite cyclic $p$-groups can easily be proved in the case where the underlying graph is a segment.
\begin{proposition}\label{segment}
Let $G$ be the fundamental group of a segment of finite cyclic $p$-groups, that is the free product of two finite cyclic $p$-groups $\langle x\rangle =\mathbb{Z}_{p^{k}}$ and $\langle y\rangle =\mathbb{Z}_{p^{l}}$   amalgamated over the subgroups $\langle x^{r}\rangle=\langle y^{s}\rangle$, for some $r,s\in \mathbb{Z}$. Then $G$ is residually nilpotent and thus residually finite-$p$.
\end{proposition}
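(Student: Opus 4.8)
The plan is to strip off the center, recognise the quotient as a free product of finite cyclic $p$-groups, and then feed this into Lemma~\ref{Lemma1} and Proposition~\ref{Varsos}. Write $G=\<x\>\ast_{\<x^{r}\>=\<y^{s}\>}\<y\>$ and set $C=\<x^{r}\>=\<y^{s}\>$, the amalgamated subgroup, which is a finite cyclic $p$-group, say of order $p^{c}$. Since both factors are abelian, every element of $C$ commutes with $x$ and with $y$, so $C\subseteq Z(G)$. For the reverse inclusion I would pass to the quotient: adjoining the relation $x^{r}=y^{s}=1$ to the standard presentation collapses the amalgam to the free product $G/C\cong \Z_{p^{k-c}}\ast \Z_{p^{l-c}}$. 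When both cyclic factors are nontrivial this free product is centerless, so any central element of $G$ maps to the identity in $G/C$ and hence lies in $C$; the degenerate cases $C=\<x\>$ or $C=\<y\>$ simply make $G$ a finite cyclic $p$-group, for which the statement is immediate. Thus $Z(G)=C$ and $G/Z(G)\cong \Z_{p^{k-c}}\ast \Z_{p^{l-c}}$.

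Next I would invoke the residual behaviour of free products of $p$-groups. By Gruenberg's results on residual properties \cite{Gruen}, a free product of finite $p$-groups is again residually finite-$p$, and therefore residually nilpotent; consequently $\gamma_{\om}(G/Z(G))$ is trivial.

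To finish, Lemma~\ref{Lemma1} yields $\gamma_{\om}(G)\cong\gamma_{\om}(G/Z(G))=1$, so $G$ is residually nilpotent. Since $G$ is the fundamental group of a (segment) graph of finite cyclic $p$-groups, Proposition~\ref{Varsos} then upgrades residual nilpotency to the conclusion that $G$ is residually finite-$p$, as required. I expect the only genuinely delicate point to be the identification $Z(G)=C$, together with citing the correct form of the fact that free products of finite $p$-groups are residually finite-$p$; everything else is a direct application of the preceding lemmas.
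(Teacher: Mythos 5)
Your proposal is correct and follows essentially the same route as the paper: identify $Z(G)$ with the amalgamated subgroup, observe that $G/Z(G)$ is a free product of finite cyclic $p$-groups, apply Gruenberg's theorem to get residual $p$-finiteness of the quotient, and then conclude via Lemma~\ref{Lemma1} and Proposition~\ref{Varsos}. The only difference is that you explicitly verify $Z(G)=\langle x^{r}\rangle=\langle y^{s}\rangle$ (via the centerless free product quotient), a point the paper simply asserts.
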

\begin{proof}
By Lemma \ref{Lemma1} we have that $G$ is residually nilpotent if and only if $G/Z(G)$ is residually nilpotent. But since the center $Z(G)$ of $G$ is the amalgamated subgroup $\langle x^{r}\rangle=\langle y^{s}\rangle$ we have that $G/Z(G)\cong \Z_{r} * \Z_{s} $. 

The groups $\Z_{r}$ and $ \Z_{s}$ are also finite cyclic $p$-groups and therefore residually finite-$p$. Therefore, using Gruenberg's result we have that their free product $G/Z(G)$ is also residually finite-$p$  and thus residually nilpotent. Consequently, $G$ is residually nilpotent and thus residually finite-$p$ by Proposition \ref{Varsos}.
\end{proof}
We mention here that the above result can also be obtained by Theorem 1 in \cite{RozSok}.

\vspace{2mm}
We will now generalize the above result in the case where the underlying graph is a tree $T$. In order to prove this, we need the following Lemma.
\begin{lemma}\label{center}
		Let $(\mathcal{G},T)$ be a graph of groups, with $T$ be a finite tree and $G_{v_{i}}$ be the vertex groups which are finite cyclic $p$-groups. Then $Z(G)=G_{e}$, for some $e\in E(T)$.
\end{lemma}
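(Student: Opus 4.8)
The plan is to argue by induction on the number $n$ of edges of the tree $T$, reducing at each step to an intersection of two subgroups living inside a single cyclic $p$-group, where the crucial structural fact is that the subgroups of a finite cyclic $p$-group are totally ordered by inclusion. Throughout I may assume the graph of groups is reduced, i.e. no edge inclusion $G_e\hookrightarrow G_v$ is onto; otherwise one collapses such an edge, which changes neither $G$ nor the relevant collection of edge groups.

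For the base case $n=1$ the group is the amalgam $G=\langle x\rangle\ast_{G_e}\langle y\rangle$ of two cyclic $p$-groups over the proper common subgroup $G_e$, exactly the situation of Proposition~\ref{segment}. Since the two vertex groups are abelian and the amalgamation is proper, every element of $G_e$ commutes with both free factors and hence with all of $G$, while the centre of a nontrivial amalgamated product is contained in the amalgamated subgroup; thus $Z(G)=G_e$.

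For the inductive step I would pick a leaf $v$ of $T$, let $e$ be its unique edge, joining $v$ to a vertex $w$, and let $G'$ be the fundamental group of the tree of groups carried by $T'=T\setminus\{v\}$, so that $G=G'\ast_{G_e}G_v$ with $G_e\subseteq G_w\subseteq G'$ and the amalgamation proper. As in the base case $Z(G)\subseteq G_e$, and an element $z\in G_e$ is central in $G$ precisely when it commutes with $G_v$ (automatic, since $G_e\subseteq G_v$ and $G_v$ is abelian) and with all of $G'$ (i.e. $z\in Z(G')$, as $z$ already lies in $G'$). Hence $Z(G)=Z(G')\cap G_e$. By the induction hypothesis $Z(G')=G_{e'}$ for some edge $e'$ of $T'$, and since a nontrivial central element of $G'$ lies in every vertex group of $G'$ (as recorded in the proof of Lemma~\ref{intersectionfinite}), in particular $Z(G')\subseteq G_w$.

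Both $Z(G')=G_{e'}$ and $G_e$ are therefore subgroups of the cyclic $p$-group $G_w$, and here the main point enters: the subgroups of a cyclic $p$-group form a chain under inclusion, so $G_{e'}$ and $G_e$ are comparable and their intersection is the smaller of the two. Consequently $Z(G)=Z(G')\cap G_e$ equals either $G_{e'}$ or $G_e$, in both cases an edge group of $T$, completing the induction; if instead $Z(G')$ is trivial, then by induction $G_{e'}$ is a trivial edge group and $Z(G)=1=G_{e'}$, so no containment is needed. The one genuinely delicate issue is comparing edge groups sitting at different, possibly distant, vertices of $T$; the induction circumvents this by always confronting the previously found central subgroup with a single new edge group inside the one vertex group $G_w$ where both visibly live, at which point the chain property of $p$-groups finishes the argument.
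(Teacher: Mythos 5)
Your proof is correct and follows essentially the same route as the paper's: induction stripping off a leaf, then comparing the previously found centre with the new edge group inside the shared cyclic $p$-vertex group, where the chain property of subgroups of a cyclic $p$-group forces comparability. Your explicit identity $Z(G)=Z(G')\cap G_e$ (from the standard description of the centre of a proper amalgam) is a slightly cleaner packaging of the paper's final step, but the underlying argument is the same.
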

\begin{proof}
		We will prove the Proposition using induction on the number of the vertices of $T$.
		Notice that in the case where $V(T)=2$ the statement is obvious.
		
		Let $V(T)=3$, i.e. $G$ is a group with presentation
		$$G=\< x,y,z \hspace{1mm}|\hspace{1mm}x^{p^{k_{1}}}=1,y^{p^{k_{2}}}=1, z^{p^{k_{3}}}=1, x^{\k}=y^{\l}, y^{\mu}=z^{\nu}\>$$
		
	where $k_{1},k_{2},k_{2},\k,\l,\mu,\nu \in \mathbb{N}$.
	
	We have that $G_{e_{1}}=\< x^{\k} \> =\< y^{\l} \>$ and  $G_{e_{2}}=\< y^{\mu} \> =\< z^{\nu} \>$. Since  $\< y^{\l} \>$ and $\< y^{\mu} \>$ are both subgroups of $\< y\> \cong \mathbb{Z}_{p^{k_{2}}}$ and  since $p$ is a prime number, we have that $\< y^{\l} \> \leqslant\< y^{\mu} \>$ or $\< y^{\mu} \> \leqslant\< y^{\l} \>$. Without loss of generality we may assume that $\< y^{\l} \> \leqslant\< y^{\mu} \>$ and therefore $ y^{\l}\in \< y^{\mu} \> \Rightarrow y^{\l}=(y^{\mu})^{\rho}$, for some $\rho\in \mathbb{Z}$. Then we have that \vspace*{-2mm}
	\begin{itemize}
		\item[]$xy^{\l}x^{-1}=xx^{\k}x^{-1}=x^{\k}=y^{\l}$ \vspace*{-2mm}
		\item[]$yy^{\l}y^{-1}=y^{\l}$  \vspace*{-2mm}
		\item[]$zy^{\l}z^{-1}=z(y^{\mu})^{\rho}z^{-1}=z(z^{\nu})^{\rho}z^{-1}=(z^{\nu})^{\rho}=(y^{\mu})^{\rho}=y^{\l}$  \vspace*{-2mm} 
	\end{itemize}
Hence $Z(G)=\<y^{\l}\>=G_{e_{1}}$. Similarly, in the case where  $\< y^{\mu} \> \leqslant\< y^{\l} \>$ we have that  $Z(G)=\<y^{\mu}\>=G_{e_{2}}$.

Assume now the statement of the Lemma holds in the case where $V(T)=n$.  We will prove that the statement holds if $T$ has $n+1$ vertices.

Let $G$ be the group with presentation
$$G=\< x_{v_{i}} \hspace{1mm}|\hspace{1mm} x_{v_{i}}^{p^{k_{i}}}=1, x_{\a(e_{j})}^{\l_{e_{j}}}= x_{\om(e_{j})}^{\l_{\bar{e}_{j}}} \>, \hspace{1mm} v_{i}\in V(T), e_{j}\in E(T)$$

The edge groups are the groups $G_{e_{j}}=\<x_{\a(e_{j})}^{\l_{e_{j}}}\>=\<x_{\om(e_{j})}^{\l_{\bar{e}_{j}}}\>$ for all $e_{j}\in E(T)$.

Now let $T'$ be the subtree of $T$ that occurs after removing an external vertex and let $H$ be the fundamental graph of groups that corresponds to $T'$. 
$$H=\< x_{v_{i}} \hspace{1mm}|\hspace{1mm} x_{v_{i}}^{p^{k_{i}}}=1, x_{\a(e_{j})}^{\l_{e_{j}}}= x_{\om(e_{j})}^{\l_{\bar{e}_{j}}} \>, \hspace{1mm} v_{i}\in V(T'), e_{j}\in E(T')$$
Notice that $H$ is a subgroup of $G$. By the inductive hypothesis we have that there exists $s$ such that $Z(H)=G_{e_{s}}$.

We have that $G_{e_{s}}= \< x_{v_{i}}^{\mu_{i}} \>$ for all $v_{i}\in V(T')$ where if $v_{i}=\a(e_{s})$ then $\mu_{i}=\l_{e_{s}}$ while if $v_{i}=\om(e_{s})$ then $\mu_{i}=\l_{\bar{e}_{s}}$. 

Let $\hat{e}$ be the edge such that $\hat{e}\in T\setminus T'$ and assume that $\a(\hat{e})=v_{n}$. We have that $G_{\hat{e}}= \< x_{v_{n}}^{\l_{\hat{e}}} \>$ which is a subgroup of $\< x_{v_{n}} \>\cong \mathbb{Z}_{p^{k_{n}}}$. Moreover,  $G_{e_{s}}= \< x_{v_n}^{\mu_{n}} \>$ which is a subgroup of $\< x_{v_n} \>\cong \mathbb{Z}_{p^{k_{n}}}$. Therefore, since $p$ is a prime number, either  $\< x_{v_n}^{\l_{\hat{e}}} \> \leqslant \< x_{v_n}^{\mu_{n}} \>$ or $\< x_{v_n}^{\mu_{n}} \> \leqslant \< x_{v_n}^{\l_{\hat{e}}} \>$. Hence we have that either $Z(G)=G_{e_{s}}$ or  $Z(G)=G_{\hat{e}}$ and thus the statement holds. 
\end{proof}
\begin{proposition}\label{ptree}
	Let $G$ be the fundamental group of a tree $T$ of finite cyclic $p$-groups. Then $G$ is residually nilpotent and thus residually finite-$p$.
\end{proposition}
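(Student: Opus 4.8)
The plan is to induct on the number of vertices of $T$, peeling off at each step the central edge furnished by Lemma \ref{center} and then passing to the central quotient through Lemma \ref{Lemma1}. The base case is a single vertex, where $G$ is a finite $p$-group and hence trivially residually finite-$p$ (the two-vertex segment is exactly Proposition \ref{segment} and serves as a useful check).

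For the inductive step, assume the claim for all trees of finite cyclic $p$-groups with at most $n$ vertices and let $T$ have $n+1$ vertices. By Lemma \ref{center} there is an edge $e$ with $G_e = Z(G)$; as the proof of that lemma shows, $G_e$ is the smallest edge group, so $G_e \leq G_{e'}$ for every edge $e'$, and since the center lies in every vertex group we also have $G_e \leq G_v$ for every $v$. Removing $e$ splits $T$ into subtrees $T_1, T_2$, each with at most $n$ vertices, and accordingly $G = G_1 *_{G_e} G_2$ with $G_i = \pi_1(\mathcal{G}, T_i)$.

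By Lemma \ref{Lemma1}, $G$ is residually nilpotent if and only if $G/Z(G) = G/G_e$ is. Because $G_e$ is central it is normal in $G_1$ and $G_2$ and equal to its own normal closure, so collapsing it turns the amalgam into a free product,
$$G/G_e \;\cong\; (G_1/G_e)*(G_2/G_e).$$
Since $G_e$ sits inside every vertex and edge group of each $T_i$, the factor $G_i/G_e$ is once more the fundamental group of a tree of finite cyclic $p$-groups on $T_i$, with vertex groups $G_v/G_e$ and at most $n$ vertices. The induction hypothesis makes each $G_i/G_e$ residually nilpotent, hence residually finite-$p$ by Proposition \ref{Varsos}; Gruenberg's theorem then gives that the free product $(G_1/G_e)*(G_2/G_e)$ is residually finite-$p$, so $G/G_e$ is residually nilpotent. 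Applying Lemma \ref{Lemma1} once more yields that $G$ is residually nilpotent, and Proposition \ref{Varsos} upgrades this to residually finite-$p$.

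I expect the main obstacle to be the two structural verifications hidden in the central step: first, that collapsing $G_e$ genuinely produces a free product rather than a further amalgam — this rests on $G_e$ being central, so that its normal closure is itself and the identified subgroups disappear — and second, that each $G_i/G_e$ remains within the class to which the induction hypothesis applies. For the latter one uses minimality of $G_e$ among the edge groups to ensure every $G_v/G_e$ and $G_{e'}/G_e$ is still a (finite cyclic) $p$-group, and one must dispose of the degenerate vertices where $G_v = G_e$ collapses to the trivial group, either by contracting them or by reading the outcome as an iterated free product still covered by Gruenberg's theorem.
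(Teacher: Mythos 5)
Your proof is correct and follows essentially the same route as the paper: induct on $|V(T)|$, use Lemma \ref{center} to identify $Z(G)$ with an edge group, pass to $G/Z(G)$ as a free product of fundamental groups of smaller trees of finite cyclic $p$-groups, and conclude via the inductive hypothesis, Proposition \ref{Varsos}, Gruenberg, and Lemma \ref{Lemma1}. The only differences are cosmetic — you start the induction at one vertex rather than two, and you spell out the structural verifications (that collapsing the central edge group yields a genuine free product and that the quotient vertex groups remain finite cyclic $p$-groups) which the paper leaves implicit.
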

\begin{proof}
		We will prove the Proposition using induction on the number of the vertices of $T$.
		
		Let $V(T)=2$. Then by Proposition \ref{segment} the result holds.
		Assume now that the statement of the Proposition holds for a graph of groups with underlying tree $T$ with $|V(T)|=n$. We will prove that the statement holds if the tree has $n+1$ vertices.
		
		By Lemma \ref{center} we have that $Z(G)$ is equal to an edge group for an edge $e\in E(T)$  (that is one of the amalgamating subgroups) and therefore the group $G/Z(G)$ is isomorphic to a free product of $G_{1}$, $G_{2},\dots,G_{m}$ which are fundamental groups of $m$ subtrees $T_{1}$, $T_{2},\dots,T_{m}$ of $T$ and therefore each has at most $n$ vertices. Hence, using the inductive hypothesis $G_{1}$, $G_{2},\dots,G_{m}$ are residually nilpotent and thus by Proposition \ref{Varsos}  residually finite-$p$. Therefore, using Gruenberg's result we have that their free product $G/Z(G)$ is also residually finite-$p$  and thus residually nilpotent. Using Lemma \ref{Lemma1} the result follows.
\end{proof}
The above result can also be verified using the criterion for residual $p$-finiteness of arbitrary graphs of finite $p$-groups by G. Wilkes (see \cite{Wilkes}).
	\begin{lemma}\label{Lemma2}
	Let $G$ be a group and $a,b\in G$ such that  $a^{k}= b^{l}$, for some $k,l\in \mathbb{Z}$. If $\k,\l$ are divisors of $k$ and $l$ respectively with $k=\k s$,  $l=\l r$  and $gcd(\k,\l)=1$ then $[a^{s},b^{r}]\in \gamma_{\omega}(G)$.
\end{lemma}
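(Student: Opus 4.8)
The plan is to reduce the statement to a purely nilpotent fact and then prove that fact by induction on the nilpotency class. First I would set $c=a^{s}$ and $d=b^{r}$ and record that, since $k=\k s$ and $l=\l r$,
$$c^{\k}=a^{s\k}=a^{k}=b^{l}=b^{r\l}=d^{\l},$$
so the element $z:=c^{\k}=d^{\l}$ is simultaneously a power of $c$ and a power of $d$; in particular $z$ commutes with both $c$ and $d$. Since $\g_{\om}(G)=\bigcap_{i\geq 1}\g_{i}(G)$, it suffices to show that $[c,d]\in\g_{i}(G)$ for every $i$. Passing to the nilpotent quotient $N=G/\g_{i}(G)$, this is equivalent to proving that the images of $c$ and $d$ commute there. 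Hence everything follows once I establish the following fact: \emph{if $N$ is a nilpotent group containing elements $c,d$ with $c^{\k}=d^{\l}$ and $\gcd(\k,\l)=1$, then $[c,d]=1$.}

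I would prove this fact by induction on the nilpotency class of $N$. The abelian case is immediate. For the inductive step, let $N$ have class $n$ and put $Z=\g_{n}(N)$, which is central and such that $N/Z$ has class at most $n-1$. The images of $c,d$ in $N/Z$ still satisfy the same relation with $\gcd(\k,\l)=1$, so the inductive hypothesis gives that $w:=[c,d]$ lies in $Z$ and is therefore central in $N$. The heart of the argument is then a short computation exploiting the centrality of $w$: using the standard identity $[c^{\k},d]=[c,d]^{\k}=w^{\k}$, valid whenever $[c,d]$ is central, together with the fact that $c^{\k}=z=d^{\l}$ commutes with $d$, I obtain $w^{\k}=[c^{\k},d]=[d^{\l},d]=1$. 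Symmetrically, $[c,d^{\l}]=w^{\l}$ while $d^{\l}=z=c^{\k}$ commutes with $c$, so $w^{\l}=1$. Choosing integers $u,v$ with $u\k+v\l=1$ then yields $w=w^{u\k+v\l}=(w^{\k})^{u}(w^{\l})^{v}=1$, i.e.\ $[c,d]=1$, completing the induction.

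Applying this fact to $N=G/\g_{i}(G)$ for every $i$ gives $[c,d]\in\g_{i}(G)$ for all $i$, hence $[a^{s},b^{r}]=[c,d]\in\bigcap_{i\geq 1}\g_{i}(G)=\g_{\om}(G)$, as required. I expect the only delicate point to be the inductive step of the nilpotent fact: first one must force $w=[c,d]$ into the center (so that the power–commutator identity applies), and only then can one extract the two relations $w^{\k}=1$ and $w^{\l}=1$ from the two readings of $z$. Once $w$ is central these relations fall out immediately, and the coprimality of $\k$ and $\l$ finishes the argument cleanly.
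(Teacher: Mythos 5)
Your proof is correct, but it takes a genuinely different route from the paper's. The paper argues by contradiction: assuming $[a^{s},b^{r}]\notin\gamma_{\omega}(G)$, it maps $G$ to a nilpotent group where the commutator survives, invokes the fact that finitely generated nilpotent groups are polycyclic and hence residually finite to pass to a \emph{finite} nilpotent quotient, decomposes that quotient into its Sylow subgroups, and then runs an order computation in cyclic $p$-groups to show that $p$ must divide both $\kappa$ and $\lambda$, contradicting $\gcd(\kappa,\lambda)=1$. You instead prove directly, by induction on nilpotency class, that in \emph{any} nilpotent group the relation $c^{\kappa}=d^{\lambda}$ with $\gcd(\kappa,\lambda)=1$ forces $[c,d]=1$: the inductive hypothesis pushes $w=[c,d]$ into the centre, after which the identity $[c^{\kappa},d]=w^{\kappa}$ together with $[d^{\lambda},d]=1=[c,c^{\kappa}]$ gives $w^{\kappa}=w^{\lambda}=1$ and hence $w=1$ by B\'ezout. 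Your argument is more elementary and self-contained: it needs no residual finiteness of finitely generated nilpotent groups and no Sylow theory, it applies to the nilpotent quotients $G/\gamma_{i}(G)$ without first restricting to a finitely generated subgroup (a point the paper glosses over, since the target $N$ of the homomorphism $f$ need not be finitely generated as stated), and it sidesteps the paper's slightly imprecise claim that $\phi(x)$ and $\phi(y)$ ``belong to the same Sylow subgroup'' (strictly one should project to a Sylow factor on which the commutator survives). What the paper's approach buys is a concrete picture of where the obstruction lives (a single prime $p$ dividing both $\kappa$ and $\lambda$), which is the heuristic driving the rest of the paper; your approach buys a cleaner and strictly stronger statement about arbitrary nilpotent groups. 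Both are valid proofs of the lemma.
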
	
\begin{proof}
	Let $a^{s}=x$ and $b^{r}=y$. We want to prove that the commutator $g=[x,y]$ belongs to $\gamma_{i}(G)$ for each $i$.
	Assume on the contrary that $1\neq g\notin \gamma_{\omega}(G)$. Therefore there exists a  homomorphism $f: G \to N$, where $N$ is a nilpotent group, such that $f(g)\neq 1$.
	
	 Now since $N$ is a finitely generated nilpotent group we have that $N$ is polycyclic and thus residually finite. Therefore there exists a homomorphism $\f: G \to \tilde{N}$ such that $\f(g)=[\f(x),\f(y)]$ $\neq 1$, where $\tilde{N}$ is a finite nilpotent group. Since $\tilde{N}$ is a finite nilpotent group it can be written as the direct product of its Sylow subgroups. Moreover, since $[\f(x),\f(y)]\neq 1$ we have that $\f(x)$ and $\f(y)$ belong to the same Sylow $p$-group $P$ of $\tilde{N}$.
	
	Let $H$, $K$ be the subgroups of $P$ that are generated by $\f(x)$ and $\f(y)$ respectively. Thus $H=\langle \f(x)\rangle\cong \mathbb{Z}_{p^\nu}$ and $K=\langle \f(y)\rangle\cong \mathbb{Z}_{p^\mu}$. Let's assume that $ord(\f(x))=ord(\f(x)^{\k})$. Since $x^{\k}=y^{\l}$ we have that $\f(x)^{\k}=\f(y)^{\l}$ and thus $\langle \f(x)\rangle =\langle \f(x)^{\k} \rangle =\langle \f(y)^{\l} \rangle$. Therefore, since $\langle \f(y)^{\l} \rangle $ is a subgroup of $  \langle \f(y)\rangle$ we have that $ \f(x) \in \langle \f(y)\rangle =K$, which is contradiction, because that would imply that $[\f(x),\f(y)]= 1$ since $K$ is cyclic. Therefore $ord(\f(x))\neq ord(\f(x)^{\k})$.  With similar arguments we can deduce that $ord(\f(y))\neq ord(\f(y)^{\l})$.

	Now we know that  $ord(\f(x)^{\k})=\dfrac{p^\nu}{gcd(\k,p^{\nu})}$ and $ord(\f(x))\neq ord(\f(x)^{\k})$. Therefore $gcd(\k,p^{\nu})\neq 1$ and thus $p$ divides $\k$. Similarly, $ord(\f(y)^{\l})=\dfrac{p^\mu}{gcd(\l,p^{\mu})}$ and $ord(\f(y))\neq ord(\f(y)^{\l})$ and thus $p$ divides $\l$. But we assumed that  $gcd(\k,\l)=1$ and hence we have a further contradiction. Therefore, we conclude that $[x,y]\in \gamma_{\omega}(G)$.
\end{proof}
\begin{corollary}\label{coprime}
		Let $G$ be a group and $a,b\in G$ with finite coprime orders. Then $[a,b]\in \gamma_{\omega}(G)$.
\end{corollary}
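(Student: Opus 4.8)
The plan is to obtain this as an immediate specialization of Lemma \ref{Lemma2}, taking the common power of $a$ and $b$ to be the identity. First I would set $m=ord(a)$ and $n=ord(b)$, so that by hypothesis $\gcd(m,n)=1$. The crucial observation is that $a^{m}=1=b^{n}$, so writing $k=m$ and $l=n$ we have $a^{k}=b^{l}$; this is precisely the equation required to invoke Lemma \ref{Lemma2}.

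It then remains to choose the divisors $\k,\l$ appearing in that lemma. I would take $\k=k=m$ and $\l=l=n$, which forces $s=1$ and $r=1$ in the factorizations $k=\k s$ and $l=\l r$. The coprimality condition $\gcd(\k,\l)=\gcd(m,n)=1$ holds exactly because $a$ and $b$ were assumed to have coprime orders. Lemma \ref{Lemma2} then yields $[a^{s},b^{r}]=[a^{1},b^{1}]=[a,b]\in\gamma_{\omega}(G)$, which is the desired conclusion.

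There is essentially no obstacle here: the corollary is the degenerate case of Lemma \ref{Lemma2} in which the relation $a^{k}=b^{l}$ is supplied for free by the finiteness of the orders, and the freedom to pick $\k,\l$ as the full numbers $k,l$ makes the exponents $s,r$ collapse to $1$. The only point worth stating explicitly is why $a^{m}=b^{n}$ is available, namely that both sides are trivial, so that the hypothesis of the lemma is met without any additional structure on $G$.
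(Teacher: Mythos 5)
Your derivation is correct and is precisely the intended one: the paper states this as an immediate consequence of Lemma \ref{Lemma2} without writing out a proof, and your specialization ($k=\mathrm{ord}(a)$, $l=\mathrm{ord}(b)$, $\kappa=k$, $\lambda=l$, so $s=r=1$ and $a^{k}=1=b^{l}$) is exactly how that implication goes. Nothing to add.
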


Let $A_{1},A_{2},\dots,A_{m}$ be finitely generated abelian groups with  $|\tau(A_{i})|=n_{i}$,  $i\in\{1,\dots,m\}$, where $\tau(A_{i})$ is the torsion subgroup of each group $A_{i}$. Thus $A_{i}=\mathbb{Z}^{N_{i}}\times\tau(A_{i})$,  $i\in\{1,\dots,m\}$.

 Each subgroup $\tau(A_{i})$ can be considered as the direct product $\mathbb{Z}_{p_{i,1}^{\k_{i,1}}}\times\mathbb{Z}_{p_{i,2}^{\k_{i,2}}}\times\dots\times\mathbb{Z}_{p_{i,r_{i}}^{\k_{i,r_{i}}}}$, where $p_{i,1}^{\k_{i,1}}p_{i,2}^{\k_{i,2}}\dots p_{i,r_{i}}^{\k_{i,r_{i}}}$ is the prime factorization of each $n_{i}$. Let $x_{i,j}$ be the generator
 of each direct factor in $\tau(A_{i})$,  and $t_{i,\bar{j}}$ be the generator of each torsion-free direct factor $\mathbb{Z}^{N_{i}}$  in the groups $A_{i}$, for all $i\in\{1,\dots,m\}$. Thus for each group
 $A_{i}$ we have $$A_{i}=\langle t_{i,1}\rangle \times \dots \langle t_{i,N_{1}} \rangle \times \langle x_{i,1}\hspace{1mm}|\hspace{1mm}x_{i,1}^{p_{i,1}^{\k_{i,1}}}\rangle \times \dots \langle x_{i,r_{i}}\hspace{1mm}|\hspace{1mm}x_{i,r_{i}}^{p_{i,1}^{\k_{i,r_{i}}}}\rangle$$ 
\begin{proposition}\label{freeprodabelian}	
	Let $G$ be the free product $G=A_{1}*A_{2}*\cdots *A_{m}$ of the  abelian groups $A_{i}$, $i\in\{1,\dots,m\}$. Then $\gamma_{\omega}(G)$ is the normal closure of the subset $\{[wx_{i_{1},j_{1}}w^{-1},x_{i_{2},j_{2}}] \}$ in $G$, where $i_{1},i_{2}\in\{1,\dots,m\}$ with $i_{1}\neq i_{2}$, $j_{1}\in\{1,\dots,r_{i_{1}}\}$, $j_{2}\in\{1,\dots,r_{i_{2}}\}$ and $p_{i_{1},j_{1}}\neq p_{i_{2},j_{2}}$ and $w$ is a  word in the generators $\{t_{i,\bar{j}}\}$, where $\bar{j}\in\{1,\dots,N_{i}\}$.
\end{proposition}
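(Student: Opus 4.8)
The plan is to prove the two inclusions separately, mirroring the strategy of the earlier results: first place the displayed generating set inside $\gamma_{\om}(G)$, then divide it out and show the quotient is residually nilpotent. Write $S$ for the displayed subset and $S^{G}$ for its normal closure. For the easy inclusion $S^{G}\subseteq\gamma_{\om}(G)$, note that a conjugate $wx_{i_{1},j_{1}}w^{-1}$ has the same finite order $p_{i_{1},j_{1}}^{\k_{i_{1},j_{1}}}$ as $x_{i_{1},j_{1}}$, while $x_{i_{2},j_{2}}$ has order $p_{i_{2},j_{2}}^{\k_{i_{2},j_{2}}}$; since $p_{i_{1},j_{1}}\neq p_{i_{2},j_{2}}$ these orders are coprime, so Corollary \ref{coprime} gives $[wx_{i_{1},j_{1}}w^{-1},x_{i_{2},j_{2}}]\in\gamma_{\om}(G)$. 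As $\gamma_{\om}(G)$ is a normal subgroup of $G$ containing $S$, it contains $S^{G}$.

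For the reverse inclusion $\gamma_{\om}(G)\subseteq S^{G}$ I would reduce everything to the residual nilpotence of $\bar G:=G/S^{G}$. Indeed, by Lemma \ref{Lemma0} we have $\g_{i}(\bar G)=\g_{i}(G)S^{G}/S^{G}$, so if $\bar G$ is residually nilpotent then $\bigcap_{i}\g_{i}(G)S^{G}=S^{G}$, and hence $\gamma_{\om}(G)=\bigcap_{i}\g_{i}(G)\subseteq\bigcap_{i}\g_{i}(G)S^{G}=S^{G}$. Thus the whole proposition comes down to proving that $\bar G$ is residually nilpotent.

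To construct enough nilpotent quotients I would, for each prime $p$ occurring among the $p_{i,j}$, let $\bar A_{i}^{(p)}=\Z^{N_{i}}\times\tau_{p}(A_{i})$ be the quotient of $A_{i}$ obtained by killing every torsion generator whose order is a power of a prime different from $p$, and set $P_{p}=\bar A_{1}^{(p)}*\cdots*\bar A_{m}^{(p)}$. Sending each $t_{i,\bar j}$ to itself, each $x_{i,j}$ with $p_{i,j}=p$ to itself, and every remaining $x_{i,j}$ to $1$ defines a homomorphism $\f_{p}\colon G\to P_{p}$. Every defining relator in $S$ pairs torsion generators of two distinct primes, so at most one of them can have order a power of $p$; the other is sent to $1$ and the relator dies under $\f_{p}$. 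Hence $\f_{p}$ factors through $\bar G$. Each free factor $\bar A_{i}^{(p)}=\Z^{N_{i}}\times(\text{finite abelian }p\text{-group})$ is residually finite-$p$, so by Gruenberg's theorem \cite{Gruen} the free product $P_{p}$ is residually finite-$p$, in particular residually nilpotent.

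The remaining and \textbf{crucial} step is to show that the family $\{\f_{p}\}_{p}$ separates $\bar G$, i.e.\ $\bigcap_{p}\ker\f_{p}=1$ in $\bar G$: then any $1\neq g\in\bar G$ has $\f_{p}(g)\neq1$ for some $p$, and composing $\f_{p}$ with a projection of $P_{p}$ onto a finite $p$-group that detects $\f_{p}(g)$ shows $g\notin\gamma_{\om}(\bar G)$, completing the argument. The intended mechanism is a rewriting argument on a free-product normal form $g=a_{1}\cdots a_{n}$, $a_{k}\in A_{i_{k}}$: split each $a_{k}$ into its free-abelian part times its $p$-primary torsion parts, and use the relations of $S^{G}$ to commute torsion generators of distinct primes past one another. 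The reason $S$ is defined with arbitrary words $w$ in the $t_{i,\bar j}$ is precisely this: modulo $S^{G}$ one has the identity $x_{i_{2},j_{2}}\,w\,x_{i_{1},j_{1}}=w\,x_{i_{1},j_{1}}\,(w^{-1}x_{i_{2},j_{2}}w)$, which lets a prime-$p$ generator be carried past a prime-$q$ generator across an intervening block of free-abelian generators at the cost of conjugating that block. Iterating should sort the word, modulo $S^{G}$, into prime-homogeneous pieces, so that a nontrivial $g$ retains a nontrivial prime-$p$ piece that $\f_{p}$ preserves. The delicate point I expect to be the main obstacle is the bookkeeping of the free-abelian generators during this sorting: since the $t_{i,\bar j}$ of different factors do not commute, one must control how the conjugating blocks accumulate and verify that the surviving piece is genuinely nontrivial in $P_{p}$ rather than collapsing under free-product cancellation.
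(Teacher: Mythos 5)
Your strategy is exactly the paper's: the inclusion $S^{G}\subseteq\gamma_{\om}(G)$ via Corollary \ref{coprime}, the reduction of the reverse inclusion to residual nilpotence of $G/S^{G}$, and the separating family of quotients $P_{p}=\bar A_{1}^{(p)}*\cdots*\bar A_{m}^{(p)}$ (the paper's $H_{\beta}$, with Gruenberg supplying residual finiteness-$p$) are all there, and you have correctly isolated why $S$ must contain commutators with arbitrary conjugating words $w$. The problem is that you stop at the one step that actually carries the proof: you assert that the family $\{\f_{p}\}$ ``should'' separate $G/S^{G}$, and you explicitly leave both the sorting argument and the nontriviality of the surviving block as open obstacles. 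As written this is a correct plan, not a proof.

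Both obstacles are resolvable, and the paper addresses the first explicitly. For the sorting: conjugating the relator $[wx_{i_{1},j_{1}}w^{-1},x_{i_{2},j_{2}}]$ by an arbitrary $t$-word and re-parametrising shows that in $G/S^{G}$ \emph{every} $t$-conjugate of a torsion generator of one prime commutes with \emph{every} $t$-conjugate of a torsion generator of a different prime. This lets you permute entire conjugated syllables $w x w^{-1}$ past one another, so the conjugating blocks never accumulate, and an element whose exponent sum in each $t_{i,\bar j}$ is zero is rewritten as $W_{p_{1}}\cdots W_{p_{s}}\cdot\chi$ with each $W_{p_{j}}$ prime-homogeneous and $\chi$ a $t$-word (the paper disposes of nonzero exponent sums and of $\chi\neq 1$ by mapping onto $\Z$, respectively onto the right-angled Artin group on the $t$'s; in your setup any single $\f_{p}$ would do, since the free abelian parts survive in $P_{p}$). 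For the non-collapsing: the natural map $\iota\colon P_{p_{\beta}}\to G/S^{G}$ satisfies $\f_{p_{\beta}}\circ\iota=\mathrm{id}$, so $\f_{p_{\beta}}$ is a retraction onto the subgroup generated by the $t$'s and the $p_{\beta}$-torsion generators; moreover $\f_{p_{\beta}}(wxw^{-1})=\f_{p_{\beta}}(w)\f_{p_{\beta}}(w)^{-1}=1$ for non-$p_{\beta}$ torsion $x$, so $\f_{p_{\beta}}(g)=\f_{p_{\beta}}(W_{p_{\beta}})$, and since some $W_{p_{\beta}}$ must be nontrivial in $G/S^{G}$ (else $g=1$), the retraction identity forces $\f_{p_{\beta}}(g)\neq 1$. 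You need to write out at least this much before the final appeal to residual finiteness-$p$ of $P_{p}$ closes the argument.
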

\begin{proof}
	Notice that the subgroup of $G$ generated by $\{t_{i,\bar{j}}\}$, $i\in\{1,\dots,m\}$, $\bar{j}\in\{1,\dots,N_{i}\}$ is a right-angled Artin group. Let $R_{t}$ be the set of relations of this group and let $S$ be  the subset $\{[wx_{i_{1},j_{1}}w^{-1},x_{i_{2},j_{2}}] \}$ of $G$. Also let $g$ be a commutator $[wx_{i_{1},j_{1}}w^{-1},x_{i_{2},j_{2}}]$. By Corollary \ref{coprime} we have that  $g$ belongs to $\gamma_{\omega}(G)$.   Therefore $S\subseteq \gamma_{\omega}(G)$.
	Now in order to prove that $\gamma_{\om}(G)=S^{G}$ we will prove that the group $\hat{G}=G/S^{G}$ is residually nilpotent.

	 The group $\hat{G}$ has the following presentation $$\hat{G}=\langle x_{i,j}, t_{i,\bar{j}} \hspace{1mm}|\hspace{1mm} x_{i,j}^{p_{i,j}^{\k_{i,j}}}, R_{t}, [wx_{i_{1},j_{1}}w^{-1},x_{i_{2},j_{2}}] \rangle$$
	where $\bar{j}\in\{1,\dots,N_{i}\}$,	$i,i_{1},i_{2}\in \{1,\dots,m\}$, 	$j_{\iota}\in \{1,\dots,r_{\iota}\}$, $\iota\in \{i,i_{1},i_{2}\}$ and if $i_{1}\neq i_{2}$, $p_{i_{1},j_{1}}\neq p_{i_{2},j_{2}}$ .
	
	Let $g$ be a non-trivial element in $\hat{G}$. If there is at least one $(i,\bar{j})$ such that the exponent sum of $t_{i,\bar{j}}$ is non-zero. Then we can take the homomorphism $\psi: \hat{G} \to \mathbb{Z}$ that maps $t_{i,\bar{j}}\mapsto 1$ and all other generators to zero then $\psi(g)$ is not trivial. Since $\mathbb{Z}$ is residually nilpotent the result follows.
	
	 On the other hand, assume that the exponent sum in each generator $t_{i,\bar{j}}$ in $g$ is zero. Then using the relations of $\hat{G}$,  $g$ can be written as a word $g=\chi_{1,1}\chi_{2,1}\dots\chi_{m,1}\cdot \chi_{1,2}\chi_{2,2}\dots \chi_{m,2}\cdot \chi_{1,\nu}\chi_{2,\nu}\dots \chi_{m,\nu}\cdot\chi$, where each $\chi_{i,\alpha}$ is word in conjugates of the generators $\{x_{i,j}\}$, $j\in\{1,\dots,r_{i}\}$, by words in $t_{i,\bar{j}}$, for all $i\in \{1,\dots,m\}$ and $\alpha\in \{1,\dots,\nu\}$ and $\chi$ is a word in the generators  $t_{i,\bar{j}}$.

	 If $\chi \neq 1$ then let $\psi$ be the homomorphism that maps all the generators of finite order to zero and the generators  $t_{i,\bar{j}}$ of infinite order to themselves. Then, $\psi(g)=\chi\neq 1$ and since the group generated by $t_{i,\bar{j}}$ is a right-angled Artin group it is residually nilpotent so the result follows.

	 Assume now that $\chi=1$.	 Let $\chi_{i,\alpha}=w _{i,1}^{\alpha}x_{i,1}^{\l _{i,1}^{\alpha}}(w _{i,1}^{\alpha})^{-1}\cdot w _{i,2}^{\alpha}x_{i,2}^{\l _{i,2}^{\alpha}}(w _{i,2}^{\alpha})^{-1}\dots w_{i,r_{i}}^{\alpha}x_{i,r_{i}}^{\l _{i,r_{i}}^{\alpha}}(w_{i,r_{i}}^{\alpha})^{-1}$, where $\l _{i,j}^{\alpha}\in \mathbb{Z}$ with $|\l _{i,j}^{\alpha}|<p_{i,j}^{\k_{i,j}}$ and $w_{i,j}^{\alpha}$ are words in $t_{i,\bar{j}}$.
	
	Notice that the relations $[wx_{i_{1},j_{1}}w^{-1},x_{i_{2},j_{2}}]$ of $\hat{G}$, where $w$ is any word in the generators $\{t_{i,\bar{j}}\}$, are equivalent to the relations $[\hat{w}x_{i_{1},j_{1}}\hat{w}^{-1},\tilde{w}x_{i_{2},j_{2}}\tilde{w}^{-1}]$, where $\hat{w}$ and $\tilde{w}$ are all in the generators $\{t_{i,\bar{j}}\}$. Using these relations, each subword $\chi_{1,\alpha}\chi_{2,\alpha}\dots \chi_{m,\alpha}$ which is equal to $w _{1,1}^{\alpha}x_{1,1}^{\l _{1,1}^{\alpha}}(w _{1,1}^{\alpha})^{-1}\cdot w _{1,2}^{\alpha}x_{1,2}^{\l _{1,2}^{\alpha}}(w _{1,2}^{\alpha})^{-1}\dots w _{1,r_{1}}^{\alpha}x_{1,r_{1}}^{\l _{1,r_{1}}^{\alpha}}(w _{1,r_{1}}^{\alpha})^{-1}\cdot w _{2,1}^{\alpha}x_{2,1}^{\l _{2,1}^{\alpha}}(w _{2,1}^{\alpha})^{-1}\cdot w _{2,2}^{\alpha}x_{2,2}^{\l _{2,2}^{\alpha}}(w _{2,2}^{\alpha})^{-1}\dots$ $ w _{2,r_{2}}^{\alpha}x_{2,r_{2}}^{\l _{2,r_{2}}^{\alpha}}(w _{2,r_{2}}^{\alpha})^{-1}$ $\dots w _{m,1}^{\alpha}x_{m,1}^{\l _{m,1}^{\alpha}}(w _{m,1}^{\alpha})^{-1}\cdot$ \\ $ w _{m,2}^{\alpha}x_{m,2}^{\l _{m,2}^{\alpha}}(w _{m,2}^{\alpha})^{-1}\dots w _{m,r_{m}}^{\alpha}x_{m,r_{m}}^{\l _{m,r_{m}}^{\alpha}}(w _{m,r_{m}}^{\alpha})^{-1}$ can be written as 
$\bar{\chi}_{p_{1},\alpha}\bar{\chi}_{p_{2},\alpha}\dots\bar{\chi}_{p_{s},\alpha}$, where each $\bar{\chi}_{p_{i},\alpha}$ is a word that consists only of conjugates of generators with orders powers of the prime number $p_{i}$ and $p_{1},\dots,p_{s}$ are the distinct prime numbers that appear in the prime factorizations of $n_{i}$ at least twice.

Now $g=\bar{\chi}_{p_{1},1}\bar{\chi}_{p_{2},1}\dots\bar{\chi}_{p_{s},1}\cdot \bar{\chi}_{p_{1},2}\bar{\chi}_{p_{2},2}\dots\bar{\chi}_{p_{s},2}\dots \bar{\chi}_{p_{1},\nu}\bar{\chi}_{p_{2},\nu}\dots\bar{\chi}_{p_{s},\nu}$.
 Again using the relators $[wx_{i_{1},j_{1}}w^{-1},x_{i_{2},j_{2}}]$ of $\hat{G}$, we have that $[\bar{\chi}_{p_{i},\alpha_{1}},\bar{\chi}_{p_{j},\alpha_{2}}]=1$ for $\alpha_{1}\neq \alpha_{2}\in \{1,\dots,\nu\}$ and $i\neq j\in \{1,\dots,s\}$. Hence $g$ can be written as the word $$g=\bar{\chi}_{p_{1},1}\bar{\chi}_{p_{1},2}\dots\bar{\chi}_{p_{1},\nu}\cdot \bar{\chi}_{p_{2},1}\bar{\chi}_{p_{2},2}\dots\bar{\chi}_{p_{2},\nu}\dots \bar{\chi}_{p_{s},1}\bar{\chi}_{p_{s},2}\dots\bar{\chi}_{p_{s},\nu}.$$
 Finally, let $W_{p_{j}}$ be the word $\bar{\chi}_{p_{j},1}\bar{\chi}_{p_{j},2}\dots\bar{\chi}_{p_{j},\nu}$, for every $j\in \{1,\dots,s\}$ and thus $g={W}_{p_{1}}{W}_{p_{2}}\dots {W}_{p_{s}}$.
 
   Notice that each subword ${W}_{j}$ consists of conjugates of generators $x_{i,j}$, $i\in \{1,\dots,m\}$, which they all have order a power of the same prime number. Moreover, since $g\neq1$, at least one of the reduced words ${W}_{p_1}, {W}_{p_2}, \dots , {W}_{p_s}$ is not trivial.
	
	Let $\beta$ be an index such that ${W}_{p_{\beta}}$ is not trivial and let $\phi_{\beta}$ be the homomorphism $\phi_{\beta} : \hat{G} \to *(\mathbb{Z}_{N_{i}}\times \mathbb{Z}_{p_{\beta}^{\k_{i,j}}})=H_{\beta}$, where $\mathbb{Z}_{p_{\beta}^{\k_{i,j}}}$ are the groups generated by the generators $x_{i,j}$ that belong to the word ${W}_{p_{\beta}}$. Hence $\phi_{\beta}(\langle t_{i,\bar{j}}\rangle)=\mathbb{Z}$, for all $i\in \{1,\dots,m\}$, $\bar{j}\in \{1,\dots,N_{i}\}$ and $\phi_{\beta}(\langle x_{i,j}\rangle)=\mathbb{Z}_{p_{\beta}^{\k_{i,j}}}$ if $ord(x_{i,j})=p_{\beta}^{\k_{i,j}}$, while $\phi_{\beta}(x_{i,j})=0$ otherwise. Then $\phi_{\beta}(g)\neq 1$. Now since the group $H_{\beta}$  is residually finite $p$-group and thus residually nilpotent, the result follows.\end{proof}

Assume now we have two finite abelian groups $A$ and $B$ with $|A|=k$ and $|B|=l$. Then $A$ can be considered as the direct product (we rearrange appropriately) $$A\cong\mathbb{Z}_{p_{1}^{\k_{1}}}\times\dots\times \mathbb{Z}_{p_{s}^{\k_{s}}}\times \mathbb{Z}_{q_{s+1}^{\k_{s+1}}} \dots\times\mathbb{Z}_{q_{n}^{\k_{n}}}$$ and $B$ as the direct product $$B\cong\mathbb{Z}_{p_{1}^{\l_{1}}}\times\dots\times \mathbb{Z}_{p_{s}^{\l_{s}}}\times \mathbb{Z}_{r_{s+1}^{\l_{s+1}}} \dots\times\mathbb{Z}_{r_{n}^{\l_{m}}}$$ where $p_{1}^{\k_{1}}\dots p_{s}^{\k_s}q_{s+1}^{\k_{s+1}}\dots q_{n}^{\k_{n}}$ and  $p_{1}^{\l_{1}}\dots p_{s}^{\l_s}r_{s+1}^{\l_{s+1}}\dots r_{m}^{\l_{m}}$ are the respective prime number factorizations of $n$ and $m$. Let $x_{i}$ be the generator of each direct factor of $A$, $i\in \{1,\dots,n\}$ and  $y_{j}$ be the generator of each direct factor of $B$, $j\in \{1,\dots,m\}$. Then the above Proposition gives us the following result. 
\begin{corollary}\label{Cor1}
	Let $G$ be the free product $G=A*B$ of finite abelian groups $A$ and $B$ described above. Then $\gamma_{\omega}(G)$ is the normal closure of the subset $\{[x_{i},y_{j}] \}$ in $G$, where   $i\in\{1,\dots,n\}$, $j\in\{1,\dots,m\}$ and if $i,j\leq s$ then $i\neq j$.
\end{corollary}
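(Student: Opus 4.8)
The plan is to obtain this corollary as a direct specialization of Proposition \ref{freeprodabelian} to the case of two free factors, $m=2$, with $A_1=A$ and $A_2=B$. The point is that for finite abelian groups the general generating set produced there collapses to the simple list $\{[x_i,y_j]\}$, and essentially all that remains is bookkeeping: matching the prime-disagreement condition of the proposition with the index condition stated here.

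First I would note that since $A$ and $B$ are \emph{finite}, their torsion-free ranks are zero; in the notation preceding the statement this means $N_1=N_2=0$, so there are no generators $t_{i,\bar{j}}$. Hence the only word $w$ in these (nonexistent) generators is the empty word $w=1$, and every conjugate $wx_{i_1,j_1}w^{-1}$ appearing in the generating set of Proposition \ref{freeprodabelian} reduces to $x_{i_1,j_1}$ itself. The generating set therefore simplifies to the commutators $[x_{i_1,j_1},x_{i_2,j_2}]$ with $i_1\neq i_2$ and $p_{i_1,j_1}\neq p_{i_2,j_2}$. Since $m=2$, the constraint $i_1\neq i_2$ forces one generator to lie in $A$ and the other in $B$; writing these as $x_i$ and $y_j$ as in the setup, the list becomes $\{[x_i,y_j]\}$ subject to the requirement that the prime attached to $x_i$ differs from the prime attached to $y_j$.

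Finally I would carry out the bookkeeping that turns the prime condition into the index condition. By the chosen ordering of factors, $x_i$ has order a power of $p_i$ when $i\leq s$ and a power of $q_i$ when $i>s$, while $y_j$ has order a power of $p_j$ when $j\leq s$ and a power of $r_j$ when $j>s$. Because $p_1,\dots,p_s$ are the distinct primes common to $|A|$ and $|B|$, and the $q$'s and $r$'s lie outside $\{p_1,\dots,p_s\}$ and in disjoint sets, the orders of $x_i$ and $y_j$ fail to be coprime precisely when $i=j\leq s$. Thus the two attached primes differ (equivalently, $x_i$ and $y_j$ have coprime orders) exactly for the pairs satisfying ``if $i,j\leq s$ then $i\neq j$,'' which is the condition in the statement. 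I do not anticipate any real obstacle: the argument is purely a specialization of Proposition \ref{freeprodabelian}, with Corollary \ref{coprime} already guaranteeing that each listed commutator lies in $\gamma_{\omega}(G)$, and the only care needed is in aligning the two equivalent formulations of the coprimality condition.
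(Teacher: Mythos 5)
Your proposal is correct and matches the paper's approach: the paper derives Corollary \ref{Cor1} directly as the specialization of Proposition \ref{freeprodabelian} to two finite (hence torsion) factors, exactly as you do. Your explicit bookkeeping — $N_1=N_2=0$ forcing $w=1$, and the translation of the prime-disagreement condition into ``if $i,j\leq s$ then $i\neq j$'' — is precisely the step the paper leaves implicit.
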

\begin{remark*}
If $A=\< a \hspace{1mm}|\hspace{1mm}a^{k}=1\>$ and $B=\<b\hspace{1mm}|\hspace{1mm}b^{l}=1\>$ then we can use the following isomorphisms $\phi_{1}$ and $\phi_{2}$ in order to write  $\gamma_{\omega}(G)$ in terms of the generators $a$ and $b$. 
\vspace{2mm}

$  \begin{array}{rcl}
	\f_{1}: \mathbb{Z}_{p_{1}^{\k_{1}}}\times\dots\times \mathbb{Z}_{p_{s}^{\k_{s}}}\times \mathbb{Z}_{q_{s+1}^{\k_{s+1}}} \dots\times\mathbb{Z}_{q_{n}^{\k_{n}}} &  \xrightarrow{\cong} & \mathbb{Z}_{k} \\ x_{i} &
	\mapsto & a^{\frac{k}{p_{i}^{\k_{i}}}}  \hspace{2mm} \text{ if } \hspace{2mm}  1\leq i\leq s \\ x_{i} &
	\mapsto & a^{\frac{k}{q_{i}^{\k_{i}}}} \hspace{2mm} \text{ if } \hspace{2mm}  s+1\leq i\leq n
\end{array}   $

\vspace{3mm}

$  \begin{array}{rcl}
	\f_{2}: \mathbb{Z}_{p_{1}^{\l_{1}}}\times\dots\times \mathbb{Z}_{p_{s}^{\l_{s}}}\times \mathbb{Z}_{r_{s+1}^{\l_{s+1}}} \dots\times\mathbb{Z}_{r_{n}^{\l_{m}}} & \xrightarrow{\cong} & \mathbb{Z}_{l} \\ y_{i} &
	\mapsto & b^{\frac{l}{p_{i}^{\l_{i}}}}  \hspace{2mm} \text{ if } \hspace{2mm}  1\leq i\leq s \\ y_{i} &
	\mapsto & b^{\frac{l}{r_{i}^{\l_{i}}}} \hspace{2mm} \text{ if } \hspace{2mm}  s+1\leq i\leq m
\end{array}   $

\vspace{3mm}

Let $\bar{G}$ be the group $\f_{1}^{-1}(\mathbb{Z}_{k})*\f_{2}^{-1}(\mathbb{Z}_{l})$. Then $\bar{G}$ is isomorphic to $G$ by the isomorphism $\f$ that maps each letter of any reduced word in $x_{i},y_{i}$ using $\f_{1}$ or $\f_{2}$.

Let $S=\{ [a^{c_{i}},b^{d_i}],[b^{e_i},a^{f_i}]\}$ in $G$, where $c_{i}=p_{i}^{\k_{i}}$, $d_{i}=p_{1}^{\l_{1}}\dots p_{i-1}^{\l_{i-1}}p_{i+1}^{\l_{i+1}}\dots p_{s}^{\l_s}$ and $e_{i}=p_{i}^{\l_{i}}$, $f_{i}=p_{1}^{\k_{1}}\dots p_{i-1}^{\k_{i-1}}p_{i+1}^{\k_{i+1}}\dots p_{s}^{\k_s}$ for all $i\in \{1,\dots s\}$. Also, let  $\xi_{k}=p_{1}^{\k_{1}}\dots p_{s}^{\k_s}$ and $\xi_{l}=p_{1}^{\l_{1}}\dots p_{s}^{\l_s}$.

We know that $\gamma_{\om}(\bar{G})$ is the normal closure of the subset $\bar{S}=\{[x_{i},y_{j}] \}$, where   $i\in\{1,\dots,n\}$, $j\in\{1,\dots,m\}$ and if $i,j\leq s$ then $i\neq j$.
We will prove that the image of $\bar{S}$ under the isomorphism $\f$ is the set $S$.

Let $i\in \{s+1,\dots,n\}$ fixed. Then $[x_{i},y_{j}]\in \gamma_{\om}(\bar{G})$ for all $j\in \{1,\dots,m\}$, which means that $[a^{\frac{k}{q_{i}^{\k_{i}}}}, b^{\frac{l}{p_{j}^{\l_{j}}}}]\in \gamma_{\om}(G)$ for all $j\in \{1,\dots,s\}$ and also $[a^{\frac{k}{q_{i}^{\k_{i}}}}, b^{\frac{l}{r_{j}^{\l_{j}}}}]\in \gamma_{\om}(G)$ for all $j\in \{s+1,\dots,m\}$.  Now since $$gcd\big(\frac{l}{p_{1}^{\l_{1}}},\frac{l}{p_{2}^{\l_{2}}},\dots,\frac{l}{p_{s}^{\l_{s}}},\frac{l}{r_{s+1}^{\l_{s+1}}},\frac{l}{r_{s+2}^{\l_{s+2}}},\dots,\frac{l}{r_{m}^{\l_{m}}}\big)=1$$ these relations can be reduced to the  relation $[a^{\frac{k}{q_{i}^{\k_{i}}}}, b]\in \gamma_{\om}(G)$. Now, since this relation holds for every $i\in \{s+1,\dots,n\}$ and $gcd\big(\frac{k}{q_{s+1}^{\k_{s+1}}},\dots,\frac{k}{q_{n}^{\k_{n}}}\big)=\xi_{k}$ all these relations can again be reduced to the relation $[a^{\xi_{k}},b]\in \gamma_{\omega}(G)$. 

With similar arguments the relationships $[x_{i},y_{j}]\in \gamma_{\om}(\bar{G})$ where, $i\in \{1,\dots,n\}$ and $j\in \{s+1,\dots,m\}$ can be reduced to the relation $[b^{\xi_{l}},a]\in \gamma_{\om}(G)$.
\vspace{2mm}

Now let   $i\in \{1,\dots,s\}$ fixed.  Then $[x_{i},y_{j}]\in \gamma_{\om}(\bar{G})$ for all $j\in \{1,\dots,i-1,i+1,\dots,m\}$ ,  which means that $[a^{\frac{k}{p_{i}^{\k_{i}}}}, b^{\frac{l}{p_{j}^{\l_{j}}}}]\in \gamma_{\om}(G)$ for all $j\in \{1,\dots,i-1,i+1,\dots,s\}$ and $[a^{\frac{k}{p_{i}^{\k_{i}}}}, b^{\frac{l}{r_{j}^{\l_{j}}}}]\in \gamma_{\om}(G)$ for all $j\in \{s+1,\dots,m\}$.
\vspace{3mm}

 It holds that  $$gcd\big(\frac{l}{p_{1}^{\l_{1}}},\dots,\frac{l}{p_{i-1}^{\l_{i-1}}},\frac{l}{p_{i+1}^{\l_{i+1}}},\dots,\frac{l}{p_{s}^{\l_{s}}},\frac{l}{r_{s+1}^{\l_{s+1}}},\dots,\frac{l}{r_{m}^{\l_{m}}}\big)=1.$$ Therefore, these relations can be reduced to the relation $[a^{\frac{k}{p_{i}^{\k_{i}}}}, b^{p_{i}^{\l_{i}}}]\in \gamma_{\om}(G)$. Thus, for every $i\in \{s+1,\dots,n\}$ we have that $[a^{\frac{k}{p_{i}^{\k_{i}}}}, b^{p_{i}^{\l_{i}}}]\in \gamma_{\om}(G)$. But since $[a^{\xi_{k}},b]\in \gamma_{\omega}(G)$ and $gcd(\frac{k}{p_{i}^{\k_{i}}},\xi_{k})=\frac{\xi_{k}}{p_{i}^{\k_{i}}}$ we have that  $[a^{\frac{\xi_{k}}{p_{i}^{\k_{i}}}}, b^{p_{i}^{\l_{i}}}]\in \gamma_{\om}(G)$. 
 
 With similar arguments we have that  for all $j\in \{1,\dots,s\}$,  $[b^{\frac{\xi_{l}}{p_{i}^{\l_{i}}}}, a^{p_{i}^{\k_{i}}}]\in \gamma_{\om}(G)$. 
 
 Since every relation is now a consequence of the relations $[a^{\frac{\xi_{k}}{p_{i}^{\k_{i}}}}, b^{p_{i}^{\l_{i}}}]\in \gamma_{\om}(G)$ and $[b^{\frac{\xi_{l}}{p_{i}^{\l_{i}}}}, a^{p_{i}^{\k_{i}}}]\in \gamma_{\om}(G)$ we deduce that $\gamma_{\om}(G)=S^{G}$.
\end{remark*}

\section{Calculation of $\gamma_{\om}(G)$ for GBS tree-groups}
\vspace{2mm}
\subsection{The segment}\label{segm}
We will now use Lemma \ref{Lemma1} and Corollary \ref{Cor1} in order to calculate the intersection $\gamma_{\omega}(G)$, where $G$ is the amalgamated free product of two infinite cyclic groups, that is a GBS group where the underlying graph is a segment. We will first provide a proof to determine explicitly when such a group is residually nilpotent, that is $\gamma_{\omega}(G)$ is trivial. This result was first proved by McCarron in \cite{McC}.
\vspace{1mm}

\begin{corollary}
	Let $G$ be the free product of the infinite cyclic groups $\langle a\rangle$ and $\langle b\rangle$ amalgamated over the cyclic subgroups $\langle a^{k}\rangle$ and $\langle b^{l}\rangle$ where $gcd(k,l)=1$. Then $\gamma_{\omega}(G)=G'$.
\end{corollary}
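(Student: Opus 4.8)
The plan is to exploit that $G=\langle a,b\rangle$ is two-generated, so that $G'=\gamma_2(G)$ coincides with the normal closure $\langle\langle [a,b]\rangle\rangle$ of the single commutator $[a,b]$: indeed $G/\langle\langle [a,b]\rangle\rangle$ is generated by two commuting elements, hence abelian, giving $G'\subseteq\langle\langle [a,b]\rangle\rangle$, while the reverse inclusion is automatic. Since $\gamma_{\omega}(G)=\bigcap_i\gamma_i(G)$ is an intersection of characteristic subgroups it is normal in $G$, and it is contained in $\gamma_2(G)=G'$. Thus the whole statement reduces to the single containment $[a,b]\in\gamma_{\omega}(G)$: once this is known, $\gamma_{\omega}(G)$ is a normal subgroup containing $[a,b]$, hence it contains all of $\langle\langle [a,b]\rangle\rangle=G'$, and combined with $\gamma_{\omega}(G)\subseteq G'$ this forces $\gamma_{\omega}(G)=G'$.

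The containment $[a,b]\in\gamma_{\omega}(G)$ is exactly Lemma \ref{Lemma2} applied in its extreme case. In our group the defining relation is $a^{k}=b^{l}$, and the hypothesis $\gcd(k,l)=1$ lets me take the divisors $\kappa=k$ and $\lambda=l$ themselves; then $k=\kappa s$ and $l=\lambda r$ force $s=r=1$, while $\gcd(\kappa,\lambda)=\gcd(k,l)=1$, so Lemma \ref{Lemma2} yields $[a^{s},b^{r}]=[a,b]\in\gamma_{\omega}(G)$. This is really the heart of the matter, and with Lemma \ref{Lemma2} already available there is essentially no obstacle left; the only thing worth checking is that the degenerate cases $k=1$ or $l=1$ (where $G$ is infinite cyclic and both sides are trivial) cause no trouble, which they do not.

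Alternatively, to match the ``calculation'' viewpoint announced for this subsection, I would pass to the central quotient. The center $Z(G)$ is the amalgamated subgroup $\langle a^{k}\rangle=\langle b^{l}\rangle$, so $G/Z(G)\cong\mathbb{Z}_{k}*\mathbb{Z}_{l}$. Since $\gcd(k,l)=1$ the two finite cyclic factors share no prime, so the parameter $s$ of Corollary \ref{Cor1} is zero and that corollary identifies $\gamma_{\omega}(\mathbb{Z}_{k}*\mathbb{Z}_{l})$ with the normal closure of all the commutators $[x_i,y_j]$, that is with $(\mathbb{Z}_{k}*\mathbb{Z}_{l})'$. One then transfers this back through Lemma \ref{Lemma1}: for every $i\geq 2$ the quotient map $\pi\colon G\to G/Z(G)$ restricts to an injection on $\gamma_i(G)$ (because $\gamma_i(G)\cap Z(G)$ is trivial), hence to an isomorphism of $\gamma_{\omega}(G)$ onto $\gamma_{\omega}(G/Z(G))$ and of $G'$ onto $(G/Z(G))'$; comparing images inside $G/Z(G)$, where $\pi(\gamma_{\omega}(G))=\gamma_{\omega}(G/Z(G))=(G/Z(G))'=\pi(G')$ and $\pi|_{G'}$ is injective, gives $\gamma_{\omega}(G)=G'$. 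The one delicate point on this second route is precisely this transfer back from the quotient, which the first route avoids entirely; I would therefore present the Lemma \ref{Lemma2} argument as the main proof.
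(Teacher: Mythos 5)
Your main argument is correct and is essentially the paper's own proof: both apply Lemma \ref{Lemma2} with $\kappa=k$, $\lambda=l$, $s=r=1$ to get $[a,b]\in\gamma_{\omega}(G)$, and then use that $G/\langle\langle[a,b]\rangle\rangle=G^{ab}$ is abelian to conclude $\gamma_{\omega}(G)=G'$. The alternative route via $G/Z(G)$ that you sketch is not needed and is not what the paper does, so presenting the Lemma \ref{Lemma2} argument as the main proof is the right choice.
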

	\begin{proof}
		By Lemma \ref{Lemma2}, since $gcd(k,l)=1$ we have that $[a,b]\in \gamma_{\omega}(G)$. Moreover since \\${G}/{\langle [a,b] \rangle}={G}/{G'}=G^{ab}$ is abelian (and thus nilpotent) we have that $\gamma_{\omega}(G)=G'$.
	\end{proof}

\begin{remark*}
If $k,l$ are not powers of the same prime number then the above Corollary states that $G$ is not residually nilpotent.
\end{remark*}

\begin{proposition}\label{prop8}
Let $G$ be the  free product of the infinite cyclic groups $\langle a\rangle$ and $\langle b\rangle$ amalgamated over the cyclic subgroups $\langle a^{k}\rangle$ and $\langle b^{l}\rangle$. If $k,l$ are  powers of the same prime number $p$ then $G$ is residually a finite $p$-group.
\end{proposition}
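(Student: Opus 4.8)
The plan is to prove residual $p$-finiteness directly, by separating each nontrivial element of $G$ from the identity in a finite $p$-group. Write $k=p^{m}$ and $l=p^{n}$, and set $C=\langle a^{k}\rangle=\langle b^{l}\rangle$. Since $a^{k}$ lies in the abelian group $\langle a\rangle$ and $b^{l}=a^{k}$ lies in the abelian group $\langle b\rangle$, and $a,b$ generate $G$, the subgroup $C$ is central, hence normal, with $G/C\cong\mathbb{Z}_{p^{m}}*\mathbb{Z}_{p^{n}}$. I note that Lemma \ref{Lemma1} together with this identification already shows $G$ is residually nilpotent (as $G/Z(G)$ is a free product of finite $p$-groups, hence residually nilpotent), but residual nilpotence does not by itself yield residual $p$-finiteness; the central elements will have to be handled separately. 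So I would take $1\neq g\in G$ and split on whether $g\in C$.

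First suppose $g\notin C$. Then the image of $g$ in $G/C\cong\mathbb{Z}_{p^{m}}*\mathbb{Z}_{p^{n}}$ is nontrivial. Since $\mathbb{Z}_{p^{m}}$ and $\mathbb{Z}_{p^{n}}$ are finite $p$-groups, hence trivially residually finite-$p$, Gruenberg's result \cite{Gruen} (equivalently, the degenerate case of Proposition \ref{segment}) shows that their free product $G/C$ is residually finite-$p$. Composing the quotient map $G\to G/C$ with a homomorphism $G/C\to P$ onto a finite $p$-group that does not kill the image of $g$ then separates $g$ in a finite $p$-group.

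It remains to treat $1\neq g\in C$. Such a $g$ is central; by Lemma \ref{intersectionGBS} the intersection $G'\cap Z(G)$ is trivial, so $g\notin G'$ and hence $g$ has nontrivial image in the abelianization $G^{ab}$. A Smith normal form computation on the single relation $p^{m}a=p^{n}b$ gives $G^{ab}\cong\mathbb{Z}\oplus\mathbb{Z}_{p^{\min(m,n)}}$, whose torsion subgroup is a $p$-group, so $G^{ab}$ is itself residually finite-$p$. Thus the nontrivial image of $g$ survives in some finite $p$-quotient of $G^{ab}$, and pulling back separates $g$. Combining the two cases shows that $G$ is residually a finite $p$-group.

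Most of the work here is carried by the earlier results, so the argument is short; the one point needing genuine care is the central case. Since these elements die in $G/Z(G)$, they are invisible to the free-product argument, and the resolution is exactly that $G'\cap Z(G)=1$ (Lemma \ref{intersectionGBS}) forces $g$ into the non-torsion behaviour of $G^{ab}$, where residual $p$-finiteness is available. Accordingly, I expect the determination of the structure of $G^{ab}$ — equivalently, checking that the image of $a^{k}$ is nontrivial and that the torsion of $G^{ab}$ is a $p$-group — to be the only computational step, and the crucial conceptual observation to be that one must prove residual $p$-finiteness directly rather than inferring it from residual nilpotence.
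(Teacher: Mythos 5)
Your proof is correct and follows essentially the same route as the paper: both arguments separate a nontrivial element using the two quotients $G^{ab}\cong\mathbb{Z}\times\mathbb{Z}_{p^{\min(m,n)}}$ and $G/C\cong\mathbb{Z}_{p^{m}}*\mathbb{Z}_{p^{n}}$, with Lemma \ref{intersectionGBS} ($G'\cap Z(G)=1$) bridging the two cases. The only cosmetic difference is that you split on $g\in C$ versus $g\notin C$, whereas the paper splits on whether the exponent sums of $a,b$ in $g$ vanish (i.e.\ $g\in G'$ or not); the two dichotomies are interchanged by the same lemma.
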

\begin{proof}
We have that $G$ is the amalgamated free product $G=G_{1}*_{A=B} G_{2}$, where $G_{1}=\langle a \rangle$, $G_{2}=\langle b  \rangle$ and $A,$ $ B$ are the subgroups $\langle a^{p^\k}\rangle$ and $\langle b^{p^\l}\rangle$ respectively, where $\k,\l\in \mathbb{N}$ and $p$ is a prime number. 
	
	Let $G^{ab}={G}/{G'}$ be the abelianization of $G$. Thus $$G^{ab}=\langle a,b \hspace{1mm}|\hspace{1mm} a^{p^\k}=b^{p^\l}, [a,b]=1 \rangle.$$ We have that $G^{ab}$ is residually a finite $p$-group. Indeed, assume without loss of generality $\k\leq l$ and let $x=ab^{-p^{\l-\k}}$ and $y=b$. Then $G^{ab}$ has a new presentation 
	$$G^{ab}=\langle a,b \hspace{1mm}|\hspace{1mm}  x^{p^\k}y^{p^\k p^{\l-\k}}=y^{p^\l}, [x,y]=1 \rangle.$$
	Now the relation $x^{p^\k}y^{p^\k p^{\l-\k}}=y^{p^\l}$ gives us $x^{p^\k}=1$ and thus  $G^{ab}\cong \mathbb{Z}\times\mathbb{Z}_{p^\k}$ which is residually a finite $p$-group .

	Now let $N=\langle a^{p^\k}\rangle=\langle b^{p^\l}\rangle$ (which is in fact the center of $G$). Then $N$ is normal in $G$ and ${G}/{N}\cong \mathbb{Z}_{p^\k}* \mathbb{Z}_{p^\l}$, which is residually a finite $p$-group from Gruenberg's result. 
	
	Let $g\in G$ with $g\neq 1$ a reduced word. Let $\f_{1}$ be the epimorphism $\f_{1}:G \to {G^{ab}}$ and $\f_{2}$ be the epimorphism  $\f_{2}:G \to {G}/{N}$. If the exponent sum of $a$ or $b$ in $g$ is different than zero then $\f_{1}(g)\neq 1$. Otherwise $g$ can be written as a product of commutators and thus $g\in \gamma_{2}(G)$. But since $\gamma_{2}(G)\cap Z(G)$ is trivial (by Lemma \ref{intersectionGBS})  we have that $\f_{2}(g)\neq 1$. Therefore, for all $g\neq1$ we have a homomorphism form $G$ to residually a finite $p$-group such that the image of $g$ is not trivial. Consequently, $G$ is residually a finite $p$-group.
\end{proof}
\begin{corollary}\label{cor3}
Let $G$ be the free product of the infinite cyclic groups $\langle a\rangle$ and $\langle b\rangle$ amalgamated over the cyclic subgroups $\langle a^{k}\rangle$ and $\langle b^{l}\rangle$. Then $G$ is residually nilpotent if and only if $k,l$ are powers of the same prime number.
\end{corollary}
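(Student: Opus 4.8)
The plan is to establish the two implications separately, drawing the easy direction from Proposition \ref{prop8} and the substantive direction from Lemma \ref{Lemma2}.

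For sufficiency, suppose $k$ and $l$ are powers of the same prime $p$. Then Proposition \ref{prop8} already tells us that $G$ is residually a finite $p$-group, and since every finite $p$-group is nilpotent, this forces $G$ to be residually nilpotent. Thus this direction requires no new argument beyond quoting the proposition.

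For necessity I would argue the contrapositive: assuming $k$ and $l$ are not powers of a common prime (and that $|k|,|l|\geq 2$, as a genuine amalgamation requires), I would first record the arithmetic observation that there must then exist two distinct primes $p$ and $q$ with $p \mid k$ and $q \mid l$ — for otherwise every prime dividing $k$ would coincide with every prime dividing $l$, forcing both $k$ and $l$ to be powers of a single prime. With such $p,q$ in hand, I would apply Lemma \ref{Lemma2} taking $\kappa = p$ and $\lambda = q$: these are divisors of $k$ and $l$ respectively with $\gcd(\kappa,\lambda)=1$, so writing $s = k/p$ and $r = l/q$ the lemma produces $[a^{s},b^{r}]\in\gamma_{\omega}(G)$.

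The step I expect to be the crux is verifying that this commutator is genuinely nontrivial, since Lemma \ref{Lemma2} on its own only places it inside $\gamma_{\omega}(G)$. Here I would exploit the amalgam structure: by Lemma \ref{intersectionGBS} the center of $G$ is the amalgamated subgroup $\langle a^{k}\rangle=\langle b^{l}\rangle$, and because $p\mid k$ with $p\geq 2$ the element $a^{s}=a^{k/p}$ is a nontrivial element of $\langle a\rangle$ that does not lie in $\langle a^{k}\rangle$; likewise $b^{r}=b^{l/q}\notin\langle b^{l}\rangle$. Two elements drawn from the distinct factors that both lie outside the amalgamated subgroup cannot commute, by the normal form theorem for amalgamated free products, since $a^{s}b^{r}a^{-s}b^{-r}$ is then a reduced word of length four. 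Hence $[a^{s},b^{r}]\neq 1$, so $\gamma_{\omega}(G)\neq 1$ and $G$ is not residually nilpotent, which completes the contrapositive and hence the proof.
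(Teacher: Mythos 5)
Your proof is correct, and the sufficiency half coincides with the paper's one-line argument (Proposition \ref{prop8} plus the fact that residually finite-$p$ implies residually nilpotent). Where you genuinely diverge is in the necessity direction. The paper delegates this to the earlier unnumbered corollary ($\gcd(k,l)=1$ implies $\gamma_{\omega}(G)=G'$) and the remark following it, which strictly speaking only covers the coprime case; for $k,l$ not powers of a common prime but with $\gcd(k,l)>1$ (say $k=6$, $l=10$) the paper's stated justification does not directly apply, and the general case is really only settled later by Theorem \ref{amalgamatedprod}. You instead extract coprime prime divisors $p\mid k$, $q\mid l$, invoke Lemma \ref{Lemma2} to place $[a^{k/p},b^{l/q}]$ in $\gamma_{\omega}(G)$, and then -- crucially -- verify nontriviality via the normal form theorem, since $a^{k/p}$ and $b^{l/q}$ lie outside the amalgamated subgroup and so the commutator is a reduced word of length four. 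This nontriviality check is a step the paper never makes explicit at this point, so your route is both self-contained and more careful. Two small remarks: the identification of $Z(G)$ with the amalgamated subgroup is not what Lemma \ref{intersectionGBS} asserts (that lemma says $G'\cap Z(G)=1$), but your argument does not actually need the center at all -- membership outside $\langle a^{k}\rangle=\langle b^{l}\rangle$ is all the normal form theorem requires; and your aside restricting to $|k|,|l|\geq 2$ correctly flags a degenerate case that the paper silently ignores.
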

\begin{proof}
	The corollary follows from the fact that every residually finite $p$-group is residually nilpotent.
\end{proof}
\vspace{1mm}
The above result determines the case where the amalgamated free product $G$ of two infinite cyclic groups in residually nilpotent and thus $\gamma_{\omega}(G)=\{1\}$.
We will now calculate the intersection $\gamma_{\omega}(G)$ in the general case.

\begin{theorem}\label{amalgamatedprod}
Let $G$ be the free product of the infinite cyclic groups $\langle a\rangle$ and $\langle b\rangle$ amalgamated over the cyclic subgroups $\langle a^{k}\rangle$ and $\langle b^{l}\rangle$, where $k=p_{1}^{\k_{1}}\dots p_{s}^{\k_s}q_{s+1}^{\k_{s+1}}\dots q_{n}^{\k_{n}}$ and  $l=p_{1}^{\l_{1}}\dots p_{s}^{\l_s}r_{s+1}^{\l_{s+1}}\dots r_{m}^{\l_{m}}$ are their respective prime factorizations. Then $\gamma_{\omega}(G)$ is the normal closure of the subset \\ $\big{\{} [a^{c_{i}},b^{d_i}],[b^{e_i},a^{f_i}]\big{\}}$ in $G$, where $c_{i}=p_{i}^{\k_{i}}$, $d_{i}=p_{1}^{\l_{1}}\dots p_{i-1}^{\l_{i-1}}p_{i+1}^{\l_{i+1}}\dots p_{s}^{\l_s}$ and $e_{i}=p_{i}^{\l_{i}}$,\\ $f_{i}=p_{1}^{\k_{1}}\dots p_{i-1}^{\k_{i-1}}p_{i+1}^{\k_{i+1}}\dots p_{s}^{\k_s}$ for all $i\in \{1,\dots s\}$.
\end{theorem}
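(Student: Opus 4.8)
The plan is to reduce the computation to the free product of two finite cyclic groups, where the generating set is already recorded in Corollary~\ref{Cor1} and the Remark following it, and then transfer that set back to $G$ through its center.

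First I would identify the center. As noted in Proposition~\ref{prop8}, the amalgamated subgroup $N=\langle a^{k}\rangle=\langle b^{l}\rangle$ is normal in $G$ and coincides with $Z(G)$, so the quotient $G/Z(G)$ is the free product $\mathbb{Z}_{k}*\mathbb{Z}_{l}$ of the finite cyclic images $\langle \bar a\rangle\cong\mathbb{Z}_{k}$ and $\langle \bar b\rangle\cong\mathbb{Z}_{l}$. By Lemma~\ref{Lemma1} the quotient map $\pi\colon G\to G/Z(G)$ induces an isomorphism of $\gamma_{\omega}(G)$ onto $\gamma_{\omega}(G/Z(G))$; the key input is that $\gamma_{\omega}(G)\cap Z(G)\subseteq G'\cap Z(G)=\{1\}$ by Lemma~\ref{intersectionGBS}, so $\pi$ restricts to an injection on $\gamma_{\omega}(G)$. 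The Remark after Corollary~\ref{Cor1}, applied to $A=\mathbb{Z}_{k}$ and $B=\mathbb{Z}_{l}$ with the stated prime factorizations, then computes $\gamma_{\omega}(\mathbb{Z}_{k}*\mathbb{Z}_{l})$ as the normal closure of exactly the set $\overline{S}=\{[\bar a^{c_{i}},\bar b^{d_{i}}],[\bar b^{e_{i}},\bar a^{f_{i}}]\}$ with $c_{i},d_{i},e_{i},f_{i}$ as in the statement, and $\pi(S)=\overline{S}$.

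It remains to pull this back to $G$. I would first check $S\subseteq\gamma_{\omega}(G)$ directly from Lemma~\ref{Lemma2}: for $[a^{c_{i}},b^{d_{i}}]$ write $k=\kappa\,c_{i}$ and $l=\lambda\,d_{i}$ with $\kappa=k/p_{i}^{\kappa_{i}}$ and $\lambda=l/d_{i}=p_{i}^{\lambda_{i}}r_{s+1}^{\lambda_{s+1}}\cdots r_{m}^{\lambda_{m}}$; since $\kappa$ carries every common prime except $p_{i}$ while $\lambda$ carries only $p_{i}$ among the common primes (the $q$'s and $r$'s being disjoint from each other and from the $p$'s), one has $gcd(\kappa,\lambda)=1$, so Lemma~\ref{Lemma2} yields $[a^{c_{i}},b^{d_{i}}]\in\gamma_{\omega}(G)$, and symmetrically for $[b^{e_{i}},a^{f_{i}}]$. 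Hence $S^{G}\subseteq\gamma_{\omega}(G)$. For the reverse inclusion I would use that $\pi$ is surjective, so $\pi(S^{G})=\overline{S}^{\,G/Z(G)}=\gamma_{\omega}(G/Z(G))=\pi(\gamma_{\omega}(G))$; since $\pi$ is injective on $\gamma_{\omega}(G)$, equality of the images forces $S^{G}=\gamma_{\omega}(G)$.

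The genuinely substantive content is not in Theorem~\ref{amalgamatedprod} itself but in the Remark after Corollary~\ref{Cor1} that it invokes; granting that, the only place demanding care is the transfer step, where one must simultaneously use that normal closures are preserved under the surjection $\pi$ and that $\gamma_{\omega}(G)$ meets the kernel $Z(G)$ trivially. I expect the $gcd$ bookkeeping verifying $S\subseteq\gamma_{\omega}(G)$ to be routine but the most error-prone point, since the exponents $c_{i},d_{i},e_{i},f_{i}$ deliberately mix the shared primes $p_{i}$ with the non-shared primes $q_{j},r_{j}$.
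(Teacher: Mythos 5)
Your proposal is correct and follows essentially the same route as the paper's (very terse) proof: pass to $G/Z(G)\cong\mathbb{Z}_{k}*\mathbb{Z}_{l}$ via Lemma \ref{Lemma1} and invoke Corollary \ref{Cor1} together with its Remark. You additionally spell out the transfer step (injectivity of the quotient map on $\gamma_{\omega}(G)$, preservation of normal closures under the surjection) and the $\gcd$ verification via Lemma \ref{Lemma2}, which the paper leaves implicit; these details are accurate.
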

\begin{proof}

 By Lemma \ref{Lemma1} we  know that $\gamma_{\om}(G)\cong\gamma_{\om}(G/Z(G))=\gamma_{\om}(\mathbb{Z}_{k}*\mathbb{Z}_{l})$.
  Now if $H$ is the free product $H=\mathbb{Z}_{k}*\mathbb{Z}_{l}$ using Corollary \ref{Cor1} (and the remark) for $H$ we have the result.
\end{proof}
\vspace{2mm}
\subsection{The case of the tree}\label{tree}
\vspace{1mm}
We will now calculate $\gamma_{\omega}(G)$ in the case where the underlying graph of the generalized Baumslag-Solitar group $G$ is a tree $T$  based on the calculation of $\gamma_{\om}(G)$ in the case where the graph is a segment. More specifically, we prove that in order to calculate $\gamma_{\om}(G)$ it is sufficient to calculate the intersection $\gamma_{\om}$ for each subgroup of $G$ that is defined by a path in the tree (using Theorem \ref{amalgamatedprod}) and  take their union. 

Remember that since $T$ is a tree, we have that $Z(G)$ is a non-trivial cyclic subgroup of each vertex group of $G$. (see \cite{Delga})
\vspace{1mm}
\begin{theorem}
Let $G$ be the GBS group presented by the following labeled tree $T$ 
and let $H_{(i,j)}^{(i',j')}$ be the subgroup generated by the pair of distinct vertex groups  $\<x_{i,j}\>$ and  $\<x_{i',j'}\>$. Let  $S_{(i,j)}^{(i',j')}$ be the set defined in Theorem \ref{amalgamatedprod} for each such subgroup and let $S$ be their union. Then $\gamma_{\omega}(G)$ is the normal closure of $S$ in $G$.

\begin{center}
	\begin{tikzpicture}[scale=0.97]
		
		\draw[thick,-] (1.5,0) -- (4,0) node[midway,sloped,above] {\footnotesize{$n_{1,1}\hspace{6mm} m_{1,1}$}};
		\draw[ thick,-] (4,0) -- (8,0) node[midway,sloped,above] {\footnotesize{$n_{2,1}\hspace{11mm}m_{2,1}$}};
		\draw[ thick,-] (4,0) -- (8,2.5) node[midway,sloped,above] {\footnotesize{$n_{2,2}\hspace{18mm}m_{2,2}$}};
		\draw[ thick,-] (4,0) -- (8,-2.5) node[midway,sloped,above] {\footnotesize{$n_{2,3}\hspace{18mm}m_{2,3}$}};
		\draw[thick,-] (8,0) -- (12,0)node[midway,sloped,above] {\footnotesize{$n_{3,1}\hspace{5mm}m_{3,1}$}};
		\draw[thick,-] (8,0) -- (12,1.7)node[midway,sloped,above] {\footnotesize{$n_{3,2}\hspace{10mm}m_{3,2}$}};
		\draw[thick,-] (8,0) -- (12,-1.7)node[midway,sloped,above] {\footnotesize{$n_{3,3}\hspace{10mm}m_{3,3}$}};
		\draw[thick,-] (8,2.5) -- (12,2)node[midway,sloped,above] {\footnotesize{$n_{3,4}\hspace{10mm}m_{3,4}$}};
		\draw[thick,-] (8,2.5) -- (12,3.8)node[midway,sloped,above] {\footnotesize{$n_{3,6}\hspace{10mm}m_{3,6}$}};
		\draw[thick,-] (8,-2.5) -- (12,-2)node[midway,sloped,above] {\footnotesize{$n_{3,5}\hspace{10mm}m_{3,5}$}};
		\draw[thick,-] (8,-2.5) -- (12,-3.8)node[midway,sloped,above] {\footnotesize{$n_{3,7}\hspace{10mm}m_{3,7}$}};
		\fill (1.5cm,0cm) circle (1.5pt);  \fill (4cm,0cm) circle (1.5pt);
		\fill (8cm,2.5cm) circle (1.5pt);\fill (8cm,-2.5cm) circle (1.5pt);
		\fill (8cm,0cm) circle (1.5pt); \fill (12cm,1.7cm) circle (1.5pt); \fill (12cm,-1.7cm) circle (1.5pt);\fill (12cm,0cm) circle (1.5pt);\fill (12cm,-2cm) circle (1.5pt);\fill (12cm,2cm) circle (1.5pt);\fill (12cm,-3.8cm) circle (1.5pt);\fill (12cm,3.8cm) circle (1.5pt);
		\fill (13cm,0cm) circle (1pt);  \fill (13.5cm,0cm) circle (1pt); \fill (14cm,0cm) circle (1pt);
		\fill (9.8cm,3.9cm) circle (1pt);  \fill (9.8cm,4.2cm) circle (1pt); \fill (9.8cm,4.5cm) circle (1pt);
		\fill (9.8cm,1.5cm) circle (1pt);  \fill (9.8cm,1.7cm) circle (1pt); \fill (9.8cm,1.9cm) circle (1pt);
		\fill (9.8cm,-1.1cm) circle (1pt);  \fill (9.8cm,-1.3cm) circle (1pt); \fill (9.8cm,-1.5cm) circle (1pt);
		\fill (9.8cm,-3.5cm) circle (1pt);  \fill (9.8cm,-3.8cm) circle (1pt); \fill (9.8cm,-4.1cm) circle (1pt);
		\fill (5.5cm,2cm) circle (1pt);  \fill (5.5cm,2.3cm) circle (1pt); \fill (5.5cm,2.6cm) circle (1pt);
		\fill (5.5cm,-1.6cm) circle (1pt);  \fill (5.5cm,-1.9cm) circle (1pt); \fill (5.5cm,-2.2cm) circle (1pt);
		\node at (1.5,-0.4) (nodeA) {\large{$\langle x_{1,1}\rangle$}}; 
		\node at (4,-0.4) (nodeB) {\large{$\langle x_{2,1}\rangle$}};
		\node at (8,-0.4) (nodeP) {\large{$\langle x_{3,1}\rangle$}};
		\node at (8,-2.9) (nodeP) {\large{$\langle x_{3,3}\rangle$}};
		\node at (8,2.9) (nodeP) {\large{$\langle x_{3,2}\rangle$}};
		\node at (12,-0.4) (nodeP) {\large{$\langle x_{4,1}\rangle$}};
		\node at (12.7,-1.6) (nodeP) {\large{$\langle x_{4,3}\rangle$}};
		\node at (12.7,1.4) (nodeP) {\large{$\langle x_{4,2}\rangle$}};
		\node at (12.7,-2.2) (nodeP) {\large{$\langle x_{4,5}\rangle$}};
		\node at (12.7,-3.9) (nodeP) {\large{$\langle x_{4,7}\rangle$}};
		\node at (12.7,2.1) (nodeP) {\large{$\langle x_{4,4}\rangle$}};
		\node at (12.7,3.7) (nodeP) {\large{$\langle x_{4,6}\rangle$}};

	\end{tikzpicture} 

\end{center} 
  
\end{theorem}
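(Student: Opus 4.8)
The plan is to prove the two inclusions $S^{G}\subseteq \g_{\om}(G)$ and $\g_{\om}(G)\subseteq S^{G}$ separately, reducing everything to the central quotient $\bar{G}=G/Z(G)$. Since $T$ is a tree, $Z(G)=\<x_{i,j}^{r_{i,j}}\>$ is a common cyclic subgroup of every vertex group, so $\bar{G}$ is the fundamental group of a tree of \emph{finite} cyclic groups $\Z_{r_{i,j}}$ with the induced edge amalgamations. By Lemma \ref{Lemma1} the quotient map $q\colon G\to\bar G$ restricts to an isomorphism $\g_{\om}(G)\xrightarrow{\ \cong\ }\g_{\om}(\bar G)$, and since $S^{G}\subseteq\g_{\om}(G)$ (proved below) it will suffice to establish the identity $\g_{\om}(\bar G)=\bar S^{\bar G}$ for $\bar S=q(S)$: as $q(\g_{\om}(G))=\g_{\om}(\bar G)=\bar S^{\bar G}=q(S^{G})$ and $q$ is injective on $\g_{\om}(G)\supseteq S^{G}$, this upgrades to $\g_{\om}(G)=S^{G}$.

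For the inclusion $S^{G}\subseteq\g_{\om}(G)$, I would first show that any two vertex generators are linked by a single power relation in $G$. Given distinct vertices, compose the edge relations $x_{\a(e)}^{\l_{e}}=x_{\om(e)}^{\bar\l_{e}}$ along the unique geodesic between them; this yields an identity $x_{i,j}^{K}=x_{i',j'}^{L}$ in $G$, so that $\<x_{i,j},x_{i',j'}\>$ is an amalgamated product of two infinite cyclic groups of exactly the type treated in Theorem \ref{amalgamatedprod}. Each generator of $S_{(i,j)}^{(i',j')}$ is a commutator $[x_{i,j}^{c},x_{i',j'}^{d}]$ whose exponents are built from coprime parts of $K$ and $L$, so Lemma \ref{Lemma2} applies verbatim inside $G$ and places it in $\g_{\om}(G)$. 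As $\g_{\om}(G)$ is a characteristic (in particular normal) subgroup, the normal closure $S^{G}$ is contained in it.

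The substantial direction is $\g_{\om}(\bar G)\subseteq\bar S^{\bar G}$, equivalently that $\bar G/\bar S^{\bar G}$ is residually nilpotent. Here I would exploit the prime decomposition of the vertex groups. For each prime $p$ dividing some $r_{i,j}$, the Sylow splitting $\Z_{r_{i,j}}\cong(\Z_{r_{i,j}})_{p}\times(\Z_{r_{i,j}})_{p'}$ is compatible with the edge relations, so projecting each generator onto its $p$-part is intended to define a retraction $\pi_{p}\colon\bar G\to\bar G_{p}$ onto the fundamental group $\bar G_{p}$ of the corresponding tree of finite cyclic $p$-groups, with a section given by the inclusions of the Sylow subgroups. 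By Proposition \ref{ptree} each $\bar G_{p}$ is residually nilpotent. The argument then mirrors Proposition \ref{freeprodabelian}: using the relations of $\bar S$, which force every commutator between a $p$-part and a $q$-part $(p\neq q)$ of (conjugates of) vertex generators to be trivial modulo $\bar S^{\bar G}$, I would rewrite a given $1\neq g\in\bar G/\bar S^{\bar G}$ as a product $W_{p_{1}}\cdots W_{p_{s}}$ of blocks, each $W_{p_{\beta}}$ a word in conjugates of generators of order a power of $p_{\beta}$. For an index $\beta$ with $W_{p_{\beta}}\neq1$, the retraction $\pi_{p_{\beta}}$ kills the other blocks and is injective on the $p_{\beta}$-pure part (because it admits a section), so $\pi_{p_{\beta}}(g)\neq1$ in the residually nilpotent group $\bar G_{p_{\beta}}$; this detects $g$ in a nilpotent quotient and yields residual nilpotence.

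The hard part will be the rewriting step across the amalgamations. Unlike the free-product situation of Proposition \ref{freeprodabelian}, the blocks $W_{p_{\beta}}$ must be extracted from a Bass--Serre normal form in a tree of groups whose vertex groups genuinely overlap along their edge subgroups, and one must check that the pairwise sets $S_{(i,j)}^{(i',j')}$ --- whose exponents are dictated by the composed path labels $K,L$ rather than by single edges --- collectively supply \emph{every} cross-prime commutation needed to carry out the regrouping. The bookkeeping reconciling these path-dependent exponents with the Sylow decomposition of each $r_{i,j}$, together with verifying that $\pi_{p}$ is genuinely a retraction and that $p$-pure words inject into $\bar G_{p}$, is where the main technical effort lies; once this is in place, the detection argument and the reduction through Lemma \ref{Lemma1} close the proof.
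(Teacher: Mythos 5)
Your proposal follows essentially the same route as the paper: reduce modulo the center via Lemma \ref{Lemma1} to a tree of finite cyclic groups, split each vertex group into its primary (Sylow) components, use the cross-prime commutators to regroup an arbitrary element into prime blocks, and detect a nontrivial block by projecting onto the tree of cyclic $p$-groups, which is residually nilpotent by Proposition \ref{ptree}. The ``hard part'' you defer --- checking that the pairwise, path-dependent sets $S_{(i,j)}^{(i',j')}$ supply all the needed cross-prime commutations --- is exactly what the paper handles by passing to the CRT generating set $\{y_{\a_{(i,j)}}\}$, taking the full set $X$ of coprime-order commutators there, and then reducing $X$ back to $S$ by the gcd procedure of the Remark following Corollary \ref{Cor1}; your observation that the projection is a retraction with a section is a mild sharpening of the paper's assertion that $\phi_{\iota}(W_{p_{\iota}})\neq 1$.
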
 
\begin{proof}

 Assume that every vertex group in $\tilde{G}$ associated to each vertex $v_{i,j}$ is the finite cyclic group $\< x_{i,j}\hspace{1mm}|\hspace{1mm}x_{i,j}^{d_{i,j}}=1\>\cong \mathbb{Z}_{d_{i,j}}$.
	The center of $G$ is $Z(G)=\langle x_{i,j}^{d_{i,j}} \rangle$ , where $n_{i,{j}},m_{i-1,j} | d_{i,j}$. 
	By Lemma \ref{Lemma1}, we have that $\gamma_{\om}(G)$ is isomorphic to $\gamma_{\om}(G/Z(G))$. 	Let $\tilde{G}$ be the group $G/Z(G)$.
	 
	Let $ p_{1_{(i,j)}}^{k_{1_{(i,j)}}} p_{2_{(i,j)}}^{k_{2_{(i,j)}}}\dots  p_{n_{(i,j)}}^{k_{n_{(i,j)}}}$ be the prime factorization of each $d_{i,j}$.  For each vertex group  $\<x_{i,j}\>$ in $\tilde{G}$ let $f_{(i,j)}$ be the following isomorphism.

	$  \begin{array}{rcl}
		f_{(i,j)}: \mathbb{Z}_{d_{i,j}} &  \xrightarrow{\cong} & \mathbb{Z}_{ p_{1_{(i,j)}}^{k_{1_{(i,j)}}}}\times\mathbb{Z}_{p_{2_{(i,j)}}^{k_{2_{(i,j)}}}}\times\dots\times \mathbb{Z}_{p_{n_{(i,j)}}^{k_{n_{(i,j)}}}} \vspace{2mm} \\  x_{i,j} &
		\mapsto & y_{1_{(i,j)}}^{\l_{1_{(i,j)}}} y_{2_{(i,j)}}^{\l_{2_{(i,j)}}}\dots  y_{n_{(i,j)}}^{\l_{n_{(i,j)}}} 
		
	\end{array}   $ \vspace{2mm}
\\
where $ {\l_{1_{(i,j)}}}, {\l_{2_{(i,j)}}},\dots, {\l_{n_{(i,j)}}}$ are the Bezout coefficients of the integers $\frac{d_{i,j}}{p_{\a_{(i,j)}}}$ where $\a=1,\dots, n$,   with $$gcd(\frac{d_{i,j}}{p_{1_{(i,j)}}},\frac{d_{i,j}}{p_{1_{(i,j)}}},\dots,\frac{d_{i,j}}{p_{n_{(i,j)}}})=1$$ 
for each vertex group $\<x_{i,j}\>$.

Let $\hat{G}$ be the group isomorphic to $\tilde{G}$ where each $x_{i,j}$ is replaced using the  isomorphisms $	f_{(i,j)}$. The generating set of $\hat{G}$ is the set $\{y_{\a_{(i,j)}}\}$, for all $\a\in \{1,\dots,n\}$ and all $(i,j)$ with $v_{i,j}\in V(T)$.

 The relations of $\hat{G}$ are the relations $y_{\a_{(i,j)}}^{ p_{\a_{(i,j)}}^{k_{\a_{(i,j)}}}}=1$, for all $\a\in \{1,\dots,n\}$ and all $(i,j)$ with $v_{i,j}\in V(T)$, the relations $[y_{\a_{(i,j)}},y_{\b_{(i,j)}}]$,   for all $\a,\b\in \{1,\dots,n\}$ and all $(i,j)$ with $v_{i,j}\in V(T)$ and the amalgamating relations $f_{(i,j)}(x_{i,j})^{n_{i,j}}=f_{(i+1,j')}(x_{i+1,j'})^{m_{i,j}}$.

Let $X$ be the subset of $\hat{G}$ consisting of the commutators $[y_{\a_{(i,j)}},y_{\b_{(i',j')}}]$, where $(i,j)\neq(i',j')$ and $p_{\a_{(i,j)}}\neq p_{\b_{(i',j')}}$, i.e the set $X$ contains all the commutators between the generators of $\hat{G}$ with coprime orders (some of which may be trivial).  By Corollary \ref{coprime} we have that $X$ is contained in $\gamma_{\om}(\hat{G})$.
We will prove that the normal closure of the set $X$ is $\gamma_{\om}(\hat{G})$ by proving that the group $H=\hat{G}/X^{\hat{G}}$ is residually nilpotent.

	Let $1\neq g\in H$. Using the procedure described in Lemma \ref{freeprodabelian} we can write $g$ as   $g={W}_{p_{1}}{W}_{p_{2}}\dots{W}_{p_{r}}$, where $p_{1},p_{2},\dots, p_{r}$ are the distinct prime numbers that arise in the prime factorizations of the exponents $d_{i,j}$ and each word  
	 ${W}_{p_{i}}$ is a word that consists only of  generators with orders powers of the prime number $p_{i}$. Now since $g\neq 1 $, at least one of the words ${W}_{p_{1}}, {W}_{p_{2}},\dots, {W}_{p_{r}}$ is not trivial. Let $W_{p_{\iota}}$ be such a word, with $\iota\in \{1,\dots,r\}$.

	Let $C$ be the subset of the generating set of $H$ that consists of generators $y_{\a_{(i,j)}}$ with $ord(y_{\a_{(i,j)}})$ be a power of the prime number $p_{\iota}$. Let $R_{C}$ be the amalgamating relations $f_{(i,j)}(x_{i,j})^{n_{i,j}}=f_{(i+1,j')}(x_{i+1,j'})^{m_{i,j}}$, in which if a generator $y_{\a_{(i,j)}}$   does not belong in $C$ then $y_{\a_{(i,j)}}=1$. The relations of $R_{C}$ are of the form $y_{\a_{(i,j)}}^{\mu_{\a_{(i,j)}}}=y_{\b_{(i+1,j')}}^{\nu_{\b_{(i+1,j')}}}$, with $y_{\a_{(i,j)}}, y_{\b_{(i+1,j')}}$ in $C$.

	 We define $G_{p_{\iota}}$ to be the group with generators $\bar{y}_{\a_{(i,j)}}$ for all $\a, (i,j)$ with $y_{\a_{(i,j)}}\in C$.  The relations of $G_{p_{\iota}}$   are the relations $\bar{y}_{\a_{(i,j)}}^{ p_{\a_{(i,j)}}^{k_{\a_{(i,j)}}}}=1$ ($p_{\a_{(i,j)}}=p_{\iota}$) as well as the relations $\bar{y}_{\a_{(i,j)}}^{\mu_{\a_{(i,j)}}}=\bar{y}_{\b_{(i+1,j')}}^{\nu_{\b_{(i+1,j')}}}$, with $y_{\a_{(i,j)}}, y_{\b_{(i+1,j')}}$ belong to $C$.

		Let $\f_{\iota}$ be the homomorphism $\f_{\iota} : H \to G_{p_{\iota}}$ 
		with $\f_{\iota}(y_{\a_{(i,j)}})=\bar{y}_{\a_{(i,j)}}$, for all generators $y_{\a_{(i,j)}}$ that belong to $C$ and $\f_{\iota}(y_{\a_{(i,j)}})=1_{G_{p_{\iota}}}$ for the rest of the generators of $H$. The map $\f_{\iota}$ is indeed a homomorphism since it preserves the relations of $H$. Since $W_{p_{\iota}}\neq 1$ we have that $\phi_{\iota}(W_{p_{\iota}})\neq 1$ and hence $\phi_{\iota}(g)\neq 1$.

		 The group $G_{p_{\iota}}$ is the free product of fundamental groups of subtrees of $T$ with finite cyclic $p_{\iota}$-groups, which by Proposition \ref{ptree} are residually finite $p_{\iota}$-groups and therefore their free product is also  residually a finite $p_{\iota}$-group and hence residually nilpotent.

		 Therefore the normal closure of the set $X$ is in $\gamma_{\om}(\hat{G})$. Now using the inverses of the isomorphisms $f_{(i,j)}$ (as described below) we can calculate $\gamma_{\om}(\bar{G})$ which is equal to  $\gamma_{\om}({G})$.
		 \vspace{2mm}

		 	$  \begin{array}{rcl}
		 	f_{(i,j)}^{-1}:\mathbb{Z}_{ p_{1_{(i,j)}}^{k_{1_{(i,j)}}}}\times\mathbb{Z}_{p_{2_{(i,j)}}^{k_{2_{(i,j)}}}}\times\dots\times \mathbb{Z}_{p_{n_{(i,j)}}^{k_{n_{(i,j)}}}}  &  \xrightarrow{\cong} & \mathbb{Z}_{d_{i,j}} \\  y_{\a_{(i,j)}}  &
		 	\mapsto & x_{i,j}^{\frac{d_{i,j}}{p_{\a_{(i,j)}}^{k_{\a_{(i,j)}}}}}, \text{ for all } \a\in \{1,\dots,n\}
		 	
		 \end{array}   $ \vspace{2mm}

		 Using the procedure described in Corollary \ref{Cor1} we can reduce the image of the set $X$ under these isomorphisms to the set $S$, i.e the union of the sets $S_{(i,j)}^{(i',j')}$. Therefore $\gamma_{\om}(G)$ is the normal closure of the subset $S$ in $G$.
\end{proof}

\section{Calculation of $(N_{p})_{\om}(G)$ for GBS tree-groups}
\subsection{The segment}
We will now calculate the intersection  $(N_{p})_{\om}(G)$ of the subgroups of finite $p$-index of $G$, $p$ prime, in the case where the  underlying graph of the GBS group $G$ is a segment. In order to prove the result we will need the following Lemma in which we calculate the abelianization of $G$.
\begin{lemma}\label{abelseg}
	Let $G$ be the group $G=\langle a,b\hspace{1mm}|\hspace{1mm}a^{k}=b^{l}\rangle$, where $d=gcd(k,l)$. 
	\\Then $G^{ab}\cong \Z_{d}\times\Z$.
\end{lemma}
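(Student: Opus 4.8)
The plan is to compute $G^{ab}$ directly as a finitely presented abelian group and to read off its isomorphism type from the Smith normal form of its relation matrix. First I would observe that $G^{ab} = G/G'$ is obtained from the given presentation of $G$ by imposing commutativity, so
$$G^{ab} = \< a, b \mid a^{k} = b^{l},\ [a,b] = 1\>.$$
Writing this abelian group additively, with $A$ and $B$ the images of $a$ and $b$, it is the quotient of the free abelian group $\Z A \oplus \Z B \cong \Z^{2}$ by the subgroup generated by the single relator $a^{k}b^{-l}$, i.e. by the vector $(k,-l)$. Thus $G^{ab} \cong \Z^{2}/\<(k,-l)\>$, and the whole problem reduces to diagonalizing one integer vector.

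Next I would perform the change of basis. The content (the gcd of the entries) of the relation vector $(k,-l)$ is $\gcd(k,l)=d$, so write $k=d\alpha$ and $l=d\beta$ with $\gcd(\alpha,\beta)=1$; then $(k,-l)=d\,(\alpha,-\beta)$ and $(\alpha,-\beta)$ is a primitive vector of $\Z^{2}$. Since a primitive vector of $\Z^{2}$ always extends to a $\Z$-basis, I would set $f_{1}=(\alpha,-\beta)$ and choose $f_{2}$ completing it to a basis; concretely, Bézout furnishes integers $\gamma,\delta$ with $\alpha\delta+\beta\gamma=1$, and $f_{2}=(\gamma,\delta)$ has $\det(f_{1},f_{2})=\alpha\delta+\beta\gamma=1$. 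In the basis $\{f_{1},f_{2}\}$ the relation subgroup is exactly $\<d f_{1}\>$, whence
$$G^{ab}\cong \Z^{2}/\<d f_{1}\>\cong \bigl(\Z f_{1}/d\,\Z f_{1}\bigr)\oplus \Z f_{2}\cong \Z_{d}\times\Z,$$
which is the assertion.

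Equivalently, this is simply the Smith normal form of the $2\times 1$ integer matrix $\binom{k}{-l}$, whose single invariant factor is $d$. I do not expect a genuine obstacle: the only points requiring care are (i) the correct identification of $G^{ab}$ with $\Z^{2}/\<(k,-l)\>$ — in particular recording the relator as $(k,-l)$ rather than $(k,l)$ — and (ii) the completion of the primitive vector $(\alpha,-\beta)$ to a $\Z$-basis, which is the one place where the hypothesis $\gcd(\alpha,\beta)=1$ (equivalently the definition $d=\gcd(k,l)$) is actually used. Conceptually, the free rank $1$ reflects that $a^{k}=b^{l}$ is a single relation on a rank-$2$ free abelian group, while the torsion factor $\Z_{d}$ measures the failure of $(k,-l)$ to be primitive. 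This also matches the special case already seen in Proposition \ref{prop8}, where $k=p^{\k}$ and $l=p^{\l}$ give $d=p^{\min(\k,\l)}$ and hence $\Z\times\Z_{p^{\min(\k,\l)}}$.
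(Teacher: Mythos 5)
Your proposal is correct and is essentially the paper's own argument in additive notation: the paper performs the same unimodular change of generators via Tietze transformations, setting $\gamma=a^{k/d}b^{-l/d}$ and $\delta=a^{y}b^{x}$ with Bézout coefficients so that the determinant is $1$ and the single relation becomes $\gamma^{d}=1$, which is exactly your basis $\{f_{1},f_{2}\}$ of $\Z^{2}$ with relation vector $df_{1}$. No substantive difference.
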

\begin{proof}
We will prove this using Tietze transformations in the presentation of $G^{ab}=\langle a,b\hspace{1mm}|\hspace{1mm}a^{k}=b^{l}, [a,b]=1 \rangle$. By Bezout's identity there are $x,y\in \Z$ such that $xk+yl=d$. Now let $\gamma=a^{\frac{k}{d}}b^{-\frac{l}{d}}$ and $\delta=\a^{y}b^{x}$. Since the determinant of the matrix 
$$\begin{pmatrix}
	\frac{k}{d} & -\frac{l}{d}\\
	y & x
\end{pmatrix}$$ is $1$ we have that $\{\gamma,\delta\}$ is a new generating set for $G^{ab}$ which has now the presentation 
$$G=\langle \gamma,\delta\hspace{1mm}|\hspace{1mm}\gamma^{d}=1,[\gamma,\delta]=1\rangle\cong \Z_{d}\times\Z.$$
Notice that $a=a^{1}=a^{x\frac{k}{d}+y\frac{l}{d}}=(a^{\frac{k}{d}})^{x}(a^{y})^{\frac{l}{d}}=(\gamma\cdot b^{\frac{l}{d}})^{x}(\delta\cdot b^{-x})^{\frac{l}{d}}=\gamma^{x}\cdot \delta^{\frac{l}{d}}$.\\
Simillarly we have that $b=\gamma^{-y}\delta^{\frac{k}{d}}$.
\end{proof}
We can now  calculate the intersection  $(N_{p})_{\om}(G)$, in the case where the  underlying graph of the GBS group $G$ is a segment.
\begin{theorem}\label{segmNp}
		Let $G$ be the group $G=\langle a,b\hspace{1mm}|\hspace{1mm}a^{k}=b^{l}\rangle$, where $ k=p^{\kappa}k_{1}, l=p^{\lambda}l_{1}$ with $p$ prime and $p\nmid k_{1},l_{1}$. Let $d_{1}=gcd(k_{1},l_{1})$. Then  $(N_{p})_{\om}(G)$ is the normal closure of the subset $\{a^{\frac{k}{d_{1}}}b^{-\frac{l}{d_{1}}}, [a,b^{p^{\lambda}}],[a^{p^{\kappa}},b] \}$ in $G$.
\end{theorem}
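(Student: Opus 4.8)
The plan is to establish the two inclusions $S^{G}\subseteq (N_{p})_{\om}(G)$ and $(N_{p})_{\om}(G)\subseteq S^{G}$, where $S=\{a^{k/d_{1}}b^{-l/d_{1}},\,[a,b^{p^{\lambda}}],\,[a^{p^{\kappa}},b]\}$. Throughout I write $k_{1}'=k_{1}/d_{1}$ and $l_{1}'=l_{1}/d_{1}$, so that $\gcd(k_{1}',l_{1}')=1$ and $p\nmid k_{1}',l_{1}'$, and set $e=\min\{\kappa,\lambda\}$. Since $(N_{p})_{\om}(G)$ is an intersection of normal subgroups it is itself normal, so the first inclusion reduces to checking that every element of $S$ lies in the kernel of each homomorphism $\f\colon G\to P$ onto a finite $p$-group. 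The second inclusion will follow once $\bar G=G/S^{G}$ is shown to be residually a finite $p$-group: then every nontrivial element of $\bar G$ is detected in a finite $p$-group, so no $g\notin S^{G}$ can lie in $(N_{p})_{\om}(G)$.

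For the first inclusion, fix $\f\colon G\to P$ with $P$ a finite $p$-group and put $z=\f(a)^{k}=\f(b)^{l}$, which is central in $P$ as the image of the central element $a^{k}=b^{l}$. Writing $u=\f(a)^{p^{\kappa}}$ and $v=\f(b)^{p^{\lambda}}$, the relations $u^{k_{1}}=v^{l_{1}}=z$ together with $p\nmid k_{1},l_{1}$ force $\langle u\rangle=\langle v\rangle=\langle z\rangle$, a cyclic $p$-group, say of order $p^{s}$. As $u,v$ are then central, $\f([a^{p^{\kappa}},b])=\f([a,b^{p^{\lambda}}])=1$. Writing $u=z^{\alpha}$ and $v=z^{\beta}$, the same relations give $\alpha k_{1}\equiv 1\equiv\beta l_{1}\pmod{p^{s}}$; since $p\nmid d_{1}$, $d_{1}$ is a unit modulo $p^{s}$, and cancelling it in $\alpha k_{1}'d_{1}=\alpha k_{1}\equiv\beta l_{1}=\beta l_{1}'d_{1}$ yields $\alpha k_{1}'\equiv\beta l_{1}'$, i.e. $u^{k_{1}'}=v^{l_{1}'}$, so $\f(a^{k/d_{1}}b^{-l/d_{1}})=1$. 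Hence $S\subseteq(N_{p})_{\om}(G)$.

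For the second inclusion I unwind the structure of $\bar G$. In $\bar G$ the relations $[a^{p^{\kappa}},b]=[a,b^{p^{\lambda}}]=1$ make $a^{p^{\kappa}}$ and $b^{p^{\lambda}}$ central, so $M=\langle a^{p^{\kappa}},b^{p^{\lambda}}\rangle$ is a central (hence normal) subgroup. Killing $M$ collapses $a^{k}=b^{l}$ and $a^{k/d_{1}}=b^{l/d_{1}}$ to trivial relations, so that $\bar G/M\cong\mathbb{Z}_{p^{\kappa}}*\mathbb{Z}_{p^{\lambda}}$, which is residually finite-$p$ by Gruenberg's result (as used in Proposition \ref{prop8}). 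On the abelian side, the relation $a^{k/d_{1}}=b^{l/d_{1}}$ renders $a^{k}=b^{l}$ redundant, so Lemma \ref{abelseg} applied to the exponents $k/d_{1}=p^{\kappa}k_{1}'$ and $l/d_{1}=p^{\lambda}l_{1}'$ gives $\bar G^{ab}\cong\mathbb{Z}_{p^{e}}\times\mathbb{Z}$, again residually finite-$p$.

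Finally I separate points in $\bar G$ exactly as in Proposition \ref{prop8}, using $\f_{1}\colon\bar G\to\bar G^{ab}$ and $\f_{2}\colon\bar G\to\bar G/M$. Given $1\neq g$, if $\f_{1}(g)\neq 1$ we are done; otherwise $g\in\bar G'$, and it remains to see $\f_{2}(g)\neq 1$, which amounts to the key fact $M\cap\bar G'=\{1\}$. This is the main obstacle, and I would settle it by a direct image computation: in $\bar G^{ab}\cong\mathbb{Z}_{p^{e}}\times\mathbb{Z}$ both $a^{p^{\kappa}}$ and $b^{p^{\lambda}}$ land in the free summand $\mathbb{Z}$, with coordinates $p^{\kappa+\lambda-e}l_{1}'$ and $p^{\kappa+\lambda-e}k_{1}'$ respectively. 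If $u^{s}v^{t}\in M\cap\bar G'$ then its $\mathbb{Z}$-coordinate $p^{\kappa+\lambda-e}(sl_{1}'+tk_{1}')$ vanishes, so $sl_{1}'+tk_{1}'=0$; as $\gcd(k_{1}',l_{1}')=1$ this forces $(s,t)=(mk_{1}',-ml_{1}')$, and then $u^{s}v^{t}=(u^{k_{1}'}v^{-l_{1}'})^{m}=1$ because $u^{k_{1}'}=a^{k/d_{1}}=b^{l/d_{1}}=v^{l_{1}'}$ in $\bar G$. Thus $M\cap\bar G'=\{1\}$, so for every $g\neq 1$ one of $\f_{1},\f_{2}$ detects $g$ in a residually finite-$p$ quotient; hence $\bar G$ is residually finite-$p$ and $(N_{p})_{\om}(G)=S^{G}$.
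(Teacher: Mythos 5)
Your proof is correct, and it follows the same overall two--quotient strategy as the paper: first verify $S\subseteq (N_{p})_{\om}(G)$ by an order/congruence computation in a finite $p$-group (your argument via $u=z^{\alpha}$, $v=z^{\beta}$ and cancelling the unit $d_{1}$ modulo $p^{s}$ is equivalent to the paper's Bezout manipulation with $xd_{1}+yp^{\mu}=1$), and then show that $G/S^{G}$ is residually finite-$p$ by playing a central quotient against the abelianization $\mathbb{Z}_{p^{e}}\times\mathbb{Z}$ from Lemma \ref{abelseg}. The one genuine difference is the choice of central subgroup: the paper factors by $N=\langle a^{p^{\kappa}}b^{-p^{\lambda}}\rangle$ and asserts that the quotient is $\langle a,b\mid a^{p^{\kappa}}=b^{p^{\lambda}}\rangle$, handled by Proposition \ref{prop8}, whereas you factor by the larger central subgroup $M=\langle a^{p^{\kappa}},b^{p^{\lambda}}\rangle$, obtaining the plain free product $\mathbb{Z}_{p^{\kappa}}*\mathbb{Z}_{p^{\lambda}}$ and Gruenberg's theorem directly. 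Your choice costs you the extra verification that $M\cap\bar G'=\{1\}$, which you carry out correctly by computing the images of $a^{p^{\kappa}}$ and $b^{p^{\lambda}}$ in the free summand of $\bar G^{ab}$ and using $u^{k_{1}'}=v^{l_{1}'}$ in $\bar G$; the paper's dual dichotomy ($g\notin N$ versus $g\in N$) instead requires that $N$ embed into $\tilde G^{ab}$, which is the same kind of coordinate computation. What your route buys is that it sidesteps a small imprecision in the paper: after adding $a^{p^{\kappa}}=b^{p^{\lambda}}$, the surviving relation $a^{k/d_{1}}=b^{l/d_{1}}$ forces $c^{k_{1}'-l_{1}'}=1$ for the common central element $c=a^{p^{\kappa}}=b^{p^{\lambda}}$, so $\tilde G/N$ is in general a proper quotient of $\langle a,b\mid a^{p^{\kappa}}=b^{p^{\lambda}}\rangle$ (an amalgam of finite cyclic $p$-groups, still residually finite-$p$ by Proposition \ref{segment}, but not literally the group to which Proposition \ref{prop8} applies); your quotient $\bar G/M$ is exactly $\mathbb{Z}_{p^{\kappa}}*\mathbb{Z}_{p^{\lambda}}$ with no such caveat.
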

\begin{proof}
	Let $f$ be a homomorphism from $G$ to a $p$-group $P$. We denote $f(g)$ by $\bar{g}$ for all $g\in G$. We have that 
	$\langle \bar{a} \rangle \geq \langle \bar{a}^{p^{\k}} \rangle = \langle \bar{a}^{k} \rangle = \langle \bar{b}^{l} \rangle = \langle \bar{b}^{p^{\l}} \rangle \leq \langle \bar{b} \rangle$. Therefore the relations $[\bar{a},\bar{b}^{p^{\lambda}}],[\bar{a}^{p^{\kappa}},\bar{b}]$ hold in $P$.

	Let $p^{\mu}=max\{ord(\bar{a}),ord(\bar{b})\}$. Since $gcd(d_{1},p^{\mu})=1$ there are $x,y\in\Z$ such that $x\cdot d_{1}+y\cdot p^{\mu}=1$.    Therefore we have the following equality in $P$.
	$$\bar{a}^{p^{\kappa}\frac{k_{1}}{d_{1}}(x\cdot d_{1})}=\bar{b}^{p^{\lambda}\frac{l_{1}}{d_{1}}(x\cdot d_{1})}\Rightarrow \bar{a}^{p^{\kappa}\frac{k_{1}}{d_{1}}(1-y\cdot p^{\mu})}=\bar{b}^{p^{\lambda}\frac{l_{1}}{d_{1}}(1-y\cdot p^{\mu})}$$
	Now since $\bar{a}^{p^{\mu}}=\bar{b}^{p^{\mu}}=1$ we have that $\bar{a}^{p^{\kappa}\frac{k_{1}}{d_{1}}}=\bar{b}^{p^{\lambda}\frac{l_{1}}{d_{1}}}$ holds in $P$ and thus  $a^{\frac{k}{d_{1}}}b^{-\frac{l}{d_{1}}}\in (N_{p})_{\om}(G)$.

	 Let $S=\{a^{\frac{k}{d_{1}}}b^{-\frac{l}{d_{1}}}, [a,b^{p^{\lambda}}],[a^{p^{\kappa}},b] \}$. Then we have that $S\subseteq (N_{p})_{\om}(G)$. 
	Let now $\tilde{G}=G/S^{G}$, that is $$\tilde{G}= \langle a,b \hspace{1mm}|\hspace{1mm} a^{\frac{k}{d_{1}}}=b^{\frac{l}{d_{1}}},[a,b^{p^{\lambda}}],[a^{p^{\kappa}},b]\rangle$$  
	
	Let $N$ be the normal subgroup of $\tilde{G}$, $N=\langle a^{p^{\k}}b^{-p^{\l}}\rangle\subseteq Z(\tilde{G})$. We have that $\tilde{G}/N=\langle a,b \hspace{1mm}|\hspace{1mm} a^{p^{\k}}=b^{p^{\l}}\rangle$, which by Proposition \ref{prop8} is residually a finite $p$-group. 
	Moreover, by Lemma \ref{abelseg} we have that $\tilde{G}^{ab}=\tilde{G}/\tilde{G}'=\langle a,b \hspace{1mm}|\hspace{1mm} a^{\frac{k}{d_{1}}}=b^{\frac{l}{d_{1}}},[a,b]\rangle$ is also residually a finite $p$-group. 
	
	 Now let $\nu_{1}, \nu_{2}$ be the natural epimorphisms from $\tilde{G}$ to $\tilde{G}/N$ and $\tilde{G}^{ab}$ respectively. If  $g$ is a non trivial element of $\tilde{G}$ with $g\notin N$ then $\nu_{1}(g)$ is not trivial since $Ker\nu_{1}=N$. Now assume that $g\in N$. Using the transformations described in Lemma \ref{abelseg} we can see that $\nu_{2}$ maps the generator $a^{p^{\k}}b^{-p^{\l}}$ of $N$ to a non trivial element of the free abelian factor of $\tilde{G}^{ab}$. Therefore $\nu_{2}(g)$ is not trivial. Consequently, $\tilde{G}$ is residually a finite $p$-group. 
\end{proof}
\subsection{The tree}
We will now calculate $(N_{p})_{\om}(G)$ in the case where the underlying graph of the generalized Baumslag-Solitar group $G$ is a tree $T$. The main idea is the utilization of  Theorem \ref{segmNp}. More specifically, we prove that in order to calculate $(N_{p})_{\om}(G)$ it is sufficient to calculate the intersection $(N_{p})_{\om}$ for each subgroup of $G$ that is defined by a path in the tree. In our proof we will use the following result from Gruenberg.

 \begin{proposition}\label{Gru2}[Gruenberg]\ \\
	If $G$ is a group and $H$ a normal subgroup of $G$ of finite $p$ index,then if $H$ is residually a finite $p$-group then $G$ is is residually a finite $p$-group.
\end{proposition}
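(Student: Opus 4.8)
The plan is to verify the defining property of residual finite‑$p$‑ness directly: given an arbitrary $1\neq g\in G$, I will produce a homomorphism from $G$ onto a finite $p$-group that does not kill $g$. I will split into two cases according to whether or not $g$ lies in $H$; the first case is immediate, and essentially all the content sits in the second.

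First I would dispose of the case $g\notin H$. Since $H$ is normal of $p$-power index, the quotient $G/H$ has order a power of $p$ and is therefore a finite $p$-group, and the natural projection $G\to G/H$ sends $g$ to the nontrivial coset $gH$. This already separates $g$ from the identity inside a finite $p$-group, so nothing more is needed here.

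The substantive case is $1\neq g\in H$. Here I would use the hypothesis that $H$ is residually a finite $p$-group to pick a normal subgroup $K$ of $H$ of $p$-power index with $g\notin K$, and then promote $K$ to a subgroup normal in all of $G$ by passing to its normal core $L=\bigcap_{x\in G}xKx^{-1}$. By construction $L$ is normal in $G$ and $L\subseteq K$, so $g\notin L$. The crucial point, which I expect to be the main obstacle, is to show that $[G:L]$ is again a power of $p$, so that $G/L$ is a genuine finite $p$-group. For this I would exploit the double normality $K\trianglelefteq H\trianglelefteq G$: because $K$ is normal in $H$ and $H$ is normal in $G$, each conjugate $xKx^{-1}$ is itself normal in $H$ and has the same $p$-power index $[H:K]$ there. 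Since $K$ has finite index in $G$ (namely $[G:H]\,[H:K]$), it has only finitely many distinct conjugates, so the diagonal map exhibits $H/L$ as a subgroup of the finite direct product of the finite $p$-groups $H/(xKx^{-1})$, whence $H/L$ is a finite $p$-group. Combining this with the fact that $[G:H]$ is a power of $p$ yields that $[G:L]=[G:H]\,[H:L]$ is a power of $p$.

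With $L$ in hand, the projection $G\to G/L$ onto a finite $p$-group does not kill $g$, which completes the second case. Since every nontrivial element of $G$ survives in some finite $p$-group quotient, $G$ is residually a finite $p$-group, exactly as asserted. The only delicate verification is the index computation for the core, and it reduces cleanly to the observation that conjugates of a subgroup normal in $H$ remain normal in $H$ when $H$ itself is normal in $G$.
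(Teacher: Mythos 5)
Your argument is correct. The paper itself gives no proof of this proposition --- it simply refers the reader to Lemma 1.5 of Gruenberg's paper \cite{Gruen} --- so there is nothing to compare line by line; what you have written is the standard self-contained core argument, and every step checks out. The case $g\notin H$ is handled by the quotient $G/H$, which is a finite $p$-group by hypothesis. In the case $1\neq g\in H$ you correctly identify the one delicate point, namely that the normal core $L=\bigcap_{x\in G}xKx^{-1}$ still has $p$-power index in $G$: since $H\trianglelefteq G$, each conjugate $xKx^{-1}$ lies in $xHx^{-1}=H$, is normal there, and has index $[H:K]$, a power of $p$; finiteness of $[G:K]$ guarantees only finitely many distinct conjugates, so $H/L$ embeds diagonally in a finite product of finite $p$-groups and is itself a finite $p$-group; multiplicativity of the index then gives $[G:L]=[G:H]\,[H:L]$, a power of $p$, and $g\notin L$ because $L\subseteq K$. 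This is a complete and correct proof, arguably a useful addition given that the paper only cites the result.
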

\begin{proof}
For proof, see \cite{Gruen}, Lemma 1.5.
\end{proof}
Moreover, we will use the following Proposition which allows us to calculate the intersection $(N_{p})_{\omega}(G)$ of a group $G$ in steps.
\begin{proposition}\label{steps}
	Let $G$ be a group and $H$ be a subgroup of $G$. Then 
	$$\bigslant{G}{(N_{p})_{\omega}(G)} \cong \bigslant{G/(N_{p})_{\omega}(H)}{(N_{p})_{\omega}\big(G/(N_{p})_{\omega}(H)\big)} $$
\end{proposition}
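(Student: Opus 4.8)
The plan is to show that the subgroup $(N_p)_\omega(H)$ is entirely absorbed by $(N_p)_\omega(G)$, so that dividing out by it first does not disturb the maximal residually finite-$p$ quotient of $G$. Write $R=(N_p)_\omega(G)$ and $K=(N_p)_\omega(H)$, and recall that $R$ (resp. $K$) is by definition the intersection of all normal subgroups of $G$ (resp. of $H$) of finite index a power of $p$. The whole argument rests on the monotonicity relation $K\subseteq R$, combined with the correspondence theorem.

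First I would establish that $K\subseteq R$. Let $M$ be any normal subgroup of $G$ of finite index a power of $p$. Since $M$ is normal in $G$, the product $MH$ is a subgroup of $G$, and the second isomorphism theorem gives $[H:M\cap H]=[MH:M]$, which divides $[G:M]$ and is therefore finite and a power of $p$; moreover $M\cap H$ is normal in $H$ because $M$ is normal in $G$. Hence $M\cap H$ lies in the family of subgroups defining $K$, so $K\subseteq M\cap H\subseteq M$. As $M$ was an arbitrary member of the family defining $R$, intersecting over all such $M$ yields $K\subseteq R$. In particular, $K$ is contained in \emph{every} normal subgroup of $G$ of finite $p$-power index.

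Next I would transport the right-hand side through the projection $\pi\colon G\to G/K$, which is defined precisely because $K$ is normal in $G$ (as $(N_p)_\omega(H)$ is characteristic in $H$, this is automatic once $H$ is normal in $G$). By the correspondence theorem, the normal subgroups of $G/K$ of finite $p$-power index are exactly the $N/K$ with $N$ normal in $G$, $K\subseteq N$, and $[G:N]$ a power of $p$; consequently $(N_p)_\omega(G/K)=\big(\bigcap N\big)/K$, the intersection running over all such $N$, and therefore $\pi^{-1}\big((N_p)_\omega(G/K)\big)=\bigcap N$. This is where the monotonicity step pays off: since $K$ is contained in every $p$-power index normal subgroup of $G$, the constraint $K\subseteq N$ is vacuous, so the intersection is exactly $R=(N_p)_\omega(G)$. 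Thus $\pi^{-1}\big((N_p)_\omega(G/K)\big)=(N_p)_\omega(G)$.

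Finally, the third isomorphism theorem gives
$$\bigslant{G/K}{(N_p)_\omega(G/K)}\;\cong\;\bigslant{G}{\pi^{-1}\big((N_p)_\omega(G/K)\big)}\;=\;\bigslant{G}{(N_p)_\omega(G)},$$
which is the asserted isomorphism. The only genuinely delicate point is the monotonicity inclusion $(N_p)_\omega(H)\subseteq(N_p)_\omega(G)$: it is exactly what forces the family of $p$-power index normal subgroups of $G/K$ to pull back to the full such family in $G$, and it implicitly demands that $K=(N_p)_\omega(H)$ be normal in $G$ so that $G/K$ is a group at all. Everything else is bookkeeping with the correspondence theorem.
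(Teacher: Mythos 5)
Your proof is correct and follows essentially the same route as the paper's: establish the monotonicity $(N_p)_\omega(H)\subseteq(N_p)_\omega(G)$, identify the image of $(N_p)_\omega(G)$ under the projection with $(N_p)_\omega\big(G/(N_p)_\omega(H)\big)$, and conclude by the third isomorphism theorem. The paper's proof merely asserts these two facts in one line, whereas you supply the second-isomorphism-theorem argument for monotonicity and the correspondence-theorem argument for the identification (and you rightly flag that normality of $(N_p)_\omega(H)$ in $G$, which the paper leaves implicit, is needed for the quotient to make sense); there is no gap.
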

\begin{proof}
	Let $\nu$ be the natural epimorphism $\nu:G \twoheadrightarrow G/(N_{p})_{\omega}(H)$. It is sufficient to see that since $(N_{p})_{\omega}(H)\subseteq (N_{p})_{\omega}(G)$ we have that $\nu\big((N_{p})_{\omega}(G)\big)=(N_{p})_{\omega}\big(G/(N_{p})_{\omega}(H)\big)$ and hence the isomorphism holds.
\end{proof}
Let us now prove the main Theorem.
\begin{theorem}\label{Npw_tree}
	Let $G$ be a GBS tree-group and let
	 $H_{(i,j)}$ be the subgroup of $G$ generated by every pair of distinct vertex groups  $\<x_{i}\>$ and  $\<x_{j}\>$. Let  $S_{(i,j)}$ be the set defined in Theorem \ref{segmNp} for each such subgroup and let $S$ be their union. Then $(N_{p})_{\om}(G)$ is the normal closure of $S$ in $G$.
\end{theorem}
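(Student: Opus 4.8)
The plan is to prove the two inclusions $S^{G}\subseteq (N_{p})_{\om}(G)$ and $(N_{p})_{\om}(G)\subseteq S^{G}$ separately, treating the first as routine and the second as the substance of the theorem.

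For the inclusion $S^{G}\subseteq (N_{p})_{\om}(G)$ I would first record the general monotonicity used already in Proposition \ref{steps}: whenever $H\leq G$ one has $(N_{p})_{\om}(H)\subseteq (N_{p})_{\om}(G)$. Indeed, if $K$ is a normal subgroup of $G$ of $p$-power index then $H\cap K$ is normal in $H$ and $[H:H\cap K]=[HK:K]$ divides $[G:K]$, hence is a power of $p$, so any $h\in (N_{p})_{\om}(H)$ lies in $H\cap K\subseteq K$; intersecting over all such $K$ gives the claim. Next, for each pair of distinct vertices the subgroup $H_{(i,j)}=\langle x_{i},x_{j}\rangle$ is, by commensurability of the elliptic elements along the path joining $v_{i}$ to $v_{j}$, an amalgamated product of the form $\langle x_{i},x_{j}\mid x_{i}^{k}=x_{j}^{l}\rangle$, i.e. a GBS segment to which Theorem \ref{segmNp} applies. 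Thus $S_{(i,j)}\subseteq (N_{p})_{\om}(H_{(i,j)})\subseteq (N_{p})_{\om}(G)$, and since $(N_{p})_{\om}(G)$ is normal, taking the union over pairs and then the normal closure yields $S^{G}\subseteq (N_{p})_{\om}(G)$.

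For the reverse inclusion it suffices to prove that $\bar G:=G/S^{G}$ is residually a finite $p$-group, and here I would imitate the two-homomorphism argument of the proof of Theorem \ref{segmNp}. The plan is to exhibit a central subgroup $N$ of $\bar G$, generated by the edge differences $x_{\a(e)}^{p^{\k_{e}}}x_{\om(e)}^{-p^{\l_{e}}}$ (which are central thanks to the commutator relations $[x_{i},x_{j}^{p^{\l}}]$ and $[x_{i}^{p^{\k}},x_{j}]$ in $S$), such that $\bar G/N$ is a GBS tree group in which every edge label is a power of $p$. Such a group is residually a finite $p$-group, which one establishes by an induction on the number of vertices paralleling Proposition \ref{ptree}: collapse at a central edge via Lemma \ref{center} into a free product that is residually $p$ by Gruenberg's theorem and Proposition \ref{Gru2}, with the segment case supplied by Proposition \ref{prop8}. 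In parallel I would compute $\bar G^{ab}$ by the Tietze moves of Lemma \ref{abelseg} applied edge by edge, checking that the relations coming from $S$ kill exactly the coprime-to-$p$ torsion, so that $\bar G^{ab}$ is a finitely generated abelian group whose torsion is a $p$-group and hence residually a finite $p$-group, and that the generators of $N$ have infinite order in $\bar G^{ab}$. A nontrivial $g\in\bar G$ is then separated by $\bar G\twoheadrightarrow\bar G/N$ when $g\notin N$ and by $\bar G\twoheadrightarrow\bar G^{ab}$ when $g\in N$, exactly as in the segment case.

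An alternative I would keep in reserve is a direct induction on $|V(T)|$ using Proposition \ref{steps}: remove a leaf $v$ joined to $u$, set $H=H_{(u,v)}$, compute $(N_{p})_{\om}(H)=S_{(u,v)}^{H}$ by Theorem \ref{segmNp}, and pass to $G/(N_{p})_{\om}(H)$, where the inductive hypothesis controls the subtree while Proposition \ref{Gru2} propagates residual $p$-finiteness across the newly attached vertex. The main obstacle in either route is the same bookkeeping problem: one must verify that the relations in $S$, which are indexed by all pairs of vertices (hence by all paths, not merely by edges), combine consistently so that the non-$p$ part of every amalgamation is removed while the $p$-adic valuations $p^{\k}$, $p^{\l}$ are preserved. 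Concretely this means tracking, along every path, the quantities $k/d_{1}$, $p^{\k}$, $p^{\l}$ of Theorem \ref{segmNp} and confirming that the greatest common divisors taken over the various paths through a common vertex are mutually compatible, so that $\bar G/N$ genuinely has only $p$-power labels and $N$ is genuinely detected by $\bar G^{ab}$.
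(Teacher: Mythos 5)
Your first inclusion ($S^{G}\subseteq (N_{p})_{\om}(G)$) is fine and is essentially what the paper asserts without comment; the monotonicity argument is the right justification. The second inclusion is where you genuinely diverge from the paper, and your main route has a gap I do not think is repairable as stated. You take $N$ to be generated by the edge differences $x_{\a(e)}^{p^{\k_{e}}}x_{\om(e)}^{-p^{\l_{e}}}$ and claim $\bar G/N$ is the GBS tree group with all labels powers of $p$. It is not: the edge relation $x_{i}^{p^{s}n}=x_{j}^{p^{r}m}$ with $\gcd(n,m)=1$ still holds in $\bar G/N$, and together with the new relation $x_{i}^{p^{s}}=x_{j}^{p^{r}}$ it forces $x_{j}^{p^{r}(n-m)}=1$, where $n-m$ need not be a power of $p$. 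Concretely, for the segment $G=\langle a,b\mid a^{5}=b^{20}\rangle$ with $p=5$, adjoining $a^{5}=b^{5}$ yields $b^{15}=a^{15}=1$, so the quotient is $\Z_{15}*_{\Z_{3}}\Z_{15}$, which contains an element of order $3$ and is therefore not residually a finite $5$-group; the half of your two-homomorphism argument that separates $g\notin N$ through $\bar G/N$ then proves nothing. (You are faithfully imitating the corresponding step in the printed proof of Theorem \ref{segmNp}, which misidentifies $\tilde G/N$ in the same way; but the paper's proof of Theorem \ref{Npw_tree} does not use the edge-difference subgroup.) A secondary problem is centrality: $S_{(i,k)}$ only gives $[x_{i}^{p^{\kappa}},x_{k}]=1$ for $\kappa$ the $p$-exponent of $x_{i}$ in the \emph{path} relation from $v_{i}$ to $v_{k}$, which is in general strictly larger than the exponent on a single edge at $v_{i}$, so your generators of $N$ are not visibly central.

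The paper's actual argument avoids both problems by quotienting instead by $K=\langle x_{1}^{p^{M_{1}}},\dots,x_{N}^{p^{M_{N}}}\rangle$, where $M_{i}$ is the \emph{maximum} $p$-exponent occurring on $x_{i}$ over all pair relations (this makes $K$ central and makes the offending non-$p$ torsion collapse): then $\tilde G/K$ is a tree of finite cyclic $p$-groups handled by Proposition \ref{ptree}, $\tilde G^{ab}/K$ is a finite $p$-group so Gruenberg's Proposition \ref{Gru2} reduces everything to $K$, and the substantive work is the inductive Tietze computation showing $K\cong\Z$ inside $\tilde G^{ab}$. Your reserve plan (peeling off a leaf and invoking Proposition \ref{steps}) is too thin to assess: $G/(N_{p})_{\om}(H_{(u,v)})$ is no longer a GBS tree group, so the inductive hypothesis does not apply to it, and Proposition \ref{Gru2} needs a normal subgroup of finite $p$-power index that you have not produced. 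I would redo the second inclusion along the paper's lines.
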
 
\begin{proof}
	Since $H_{(i,j)}$ are subgroups of $G$ it is obvious that $S_{(i,j)}$ are contained in $(N_{p})_{\om}(G)$. Using Proposition \ref{steps} we will calculate $(N_{p})_{\om}(G)$ in two steps. 
	
	Consider every pair of generators $x_{i}$ and $x_{j}$ such that $\<x_{i}\>$ and  $\<x_{j}\>$ correspond to adjacent vertices. Then since the relation $x_{i}^{p^{s_{i}}\nu_{i}}=x_{j}^{p^{r_{j}}\mu_{j}}$ holds we know that $$x_{i}^{p^{s_{i}}\frac{\nu_{i}}{d_{i,j}}}\cdot x_{j}^{-p^{r_{j}}\frac{\mu_{i}}{d_{i,j}}}\in (N_{p})_{\om}(G), \text{ where\hspace{2mm}} d_{i,j}=gcd(\nu_{i},\mu_{j})$$
	
	Let $\bar{G}$ be the quotient group of $G$ by the normal closure of the set that consists of the above relations. Then $\bar{G}$ is again a GBS tree-group with different edge groups.  Notice that if $n_{i}=\frac{\nu_{i}}{d_{i,j}}$ and $m_{i}=\frac{\mu_{i}}{d_{i,j}}$ then in $\bar{G}$ if two generators $x_{i}$ and $x_{j}$ correspond to adjacent vertices the following relation holds.
	
	$$x_{i}^{p^{s_{i}}n_{i}}=x_{j}^{p^{r_{j}}m_{j}}, \text{ where\hspace{2mm}} gcd(n_{i},m_{j})=1$$ 
	Consider now every subgroup $\bar{H}_{(i,j)}$ of $\bar{G}$ that is generated by every pair of distinct vertex groups  $\<x_{i}\>$ and  $\<x_{j}\>$ and let  $\bar{S}_{(i,j)}$ be the set defined in Theorem \ref{segmNp} for each such subgroup and  $\bar{S}$ be their union. We shall show that  $\tilde{G}=\bar{G}/\bar{S}^{G}$ is residually a finite $p$-group. 
	
  In order to prove that we will first calculate the abelianization $\tilde{G}^{ab}$ of  $\tilde{G}$ and prove it is residually a finite $p$-group.

 	Let $|V(T)|=N$ and $x_{1},\dots,x_{N}$ be the generators of the vertex groups. Then $\tilde{G}^{ab}$  contains the following type of relations.
 	
 	Relations of the form 
 	
 	$$x_{i}^{p^{s_{i}}n_{i}}=x_{j}^{p^{r_{j}}m_{j}}, \text{ where\hspace{2mm}} gcd(n_{i},m_{j})=1\hspace{5mm} (R_{1})$$ in case where $\<x_{i}\>$ and $\<x_{j}\>$ correspond to adjacent vertices. 
 	
 	If on the other hand we  assume that the vertices that correspond to $\<x_{i}\>$ and  $\<x_{j}\>$ are connected by the following path

 	\begin{center}

 	\vspace{1mm}
 		
 		\begin{tikzpicture}
 			
 			\draw[thick,-] (1,0) -- (4,0) node[midway,sloped,above] {\footnotesize{$p^{s_{i_{1}}}n_{i_{1}}\hspace{7mm}p^{r_{i_{1}}}m_{i_{1}}$}};
 			\draw[thick,-] (4,0) -- (7,0) node[midway,sloped,above] {\footnotesize{$p^{s_{i_{2}}}n_{i_{2}}\hspace{7mm}p^{r_{i_{2}}}m_{i_{2}}$}};
 			
 			\fill (1.0cm,0cm) circle (1.5pt);  \fill (4cm,0cm) circle (1.5pt);
 			\fill (7cm,0cm) circle (1.5pt);
 			
 			\fill (8cm,0cm) circle (1.2pt);
 			\fill (8.5cm,0cm) circle (1.2pt);
 			\fill (9cm,0cm) circle (1.2pt);
 				\fill (10cm,0cm) circle (1.5pt);
 			\fill (13cm,0cm) circle (1.5pt);
 				\draw[thick,-] (10,0) -- (13,0) node[midway,sloped,above] {\footnotesize{$p^{s_{i_{k}}}n_{i_{k}}\hspace{7mm}p^{r_{i_{k}}}m_{i_{k}}$}};

 			\node at (1.0,-0.4) (nodeA) {{$\< x_{i} \> $}};

 			\node at (13,-0.4) (nodeP){{$\< x_{j} \> $}};

 		\end{tikzpicture} 
 	
 	\end{center}
  then we have a relation of the form 
  $$x_{i}^{p^{s_{i}}\frac{n_{i_{1}}\cdots n_{i_{k}}}{d_{i_{1},i_{k}}}}=x_{j}^{p^{r_{j}}\frac{m_{i_{1}}\cdots m_{i_{k}}}{d_{i_{1},i_{k}}}}, \text{where\hspace{2mm}} d_{i_{1},i_{k}}=gcd(n_{i_{1}}\cdots n_{i_{k}},m_{i_{1}}\cdots m_{i_{k}})\hspace{5mm} (R_{2})$$ 
Finally we have the relations that are commutators between the generators $x_{i}$, that is 
  $$[x_{i},x_{j}]=1, \text{ for all } i\neq j \in \{1,\dots,N\}.\hspace{5mm}(R_{3})$$

  For each generator $x_i$, using relations $(R_{1})$ and $(R_{2})$,  we define $M_{i}=max\{s_{i},r_{i}\}$ for all $i\neq j\in \{1,\dots,N\}.$ That is, for each generator $x_i$, we choose the highest power of $p$ that arises in $x_i$ among the relations of $x_i$ with every other generator $x_{j}$, $j\neq i$. Then, if $K$ is the subgroup of $\tilde{G}^{ab}$ generated by $\{ x_{1}^{p^{M_{1}}},  x_{2}^{p^{M_{2}}},\dots, x_{N}^{p^{M_{N}}}\}$, we have that $\tilde{G}^{ab}/K$ is a quotient group of $\Z_{p^{M_{1}}}\times\Z_{p^{M_{2}}}\times\cdots\times\Z_{p^{M_{N}}}$ and thus a finite $p$-group. Therefore, using Gruenberg's Proposition \ref{Gru2} it is sufficient to prove that the subgroup $K$ is residually a finite $p$-group. 
  
  Let $y_{i}=x_{i}^{p^{M_{i}}}$, for all $i\in \{1,\dots,N\}$.
  Now $K$ has a presentation with generating set $\{y_{1},\dots,y_{N}\}$ and  relations of the form:
  \begin{enumerate}
  	\item[($R'_{1})$] $y_{i}^{n_{i,j}}=y_{j}^{m_{i,j}}$
  		\item[($R'_{2})$] $y_{i}^{\frac{n_{i_{1}}\cdots n_{i_{k}}}{d_{i_{1},i_{k}}}}=y_{j}^{\frac{m_{i_{1}}\cdots m_{i_{k}}}{d_{i_{1},i_{k}}}}$
  			\item[($R'_{3})$] $[y_{i},y_{j}]=1$
  \end{enumerate}
\textbf{Claim:} $K\cong \Z$.\\
We will prove the claim  using induction on the number of vertices of the tree.	

Let $|V(T)|=2$. Then we have that $K$ is the abelianization of the fundamental group of the following segment of groups, where $gcd(n,m)=1$.
	
\begin{center}

	\vspace{3mm}
	
	\begin{tikzpicture}
		
		\draw[thick,-] (1,0) -- (4,0) node[midway,sloped,above] {\footnotesize{${n}\hspace{15mm} {m}$}};
	
		\fill (1.0cm,0cm) circle (1.5pt);  \fill (4cm,0cm) circle (1.5pt);
		
		\node at (1.0,-0.4) (nodeA) {{$\< y_{1} \> $}};

		\node at (4,-0.4) (nodeP){{$\< y_{2} \> $}};

	\end{tikzpicture} 
	
\end{center}
Then by Lemma \ref{abelseg} the result holds.

Assume now that $|V(T)|=N$. We will prove that with appropriate changes on the generating set we can take a group that is the abelianization of the fundamental group of a tree of groups where the underlying tree has $N-1$ vertices, in which the hypothesis holds.

 We choose an edge of the underlying tree as depicted in the following figure, where $gcd(m_{1},n_{1})=1$. There are $\k,\l\in \Z$ such that $\k n_{1}+\l m_{1}=1$.
Let $\a=y_{1}^\l y_{2}^{\k}$. Then using Tietze transformation, we can remove $y_{1}$ and $y_{2}$ from the generating set by replacing them with $y_{1}=\a^{{m_{1}}}$ and $y_{2}=\a^{{n_{1}}}$. Therefore, the original underlying tree $T$ is now a tree $T'$ with $N-1$ vertices.

\begin{center}
\vspace{3mm}

	\begin{minipage}{150pt}
		
	\begin{tikzpicture}
		\centering
		\draw[thick,-] (1,0) -- (3,0) node[midway,sloped,above] {\footnotesize{$n_{1}\hspace{10mm} m_{1}$}};
		\draw[dashed,-] (-0.2,0) -- (1,0) node[midway,sloped,above] {\footnotesize{}};
		\draw[dashed,-] (0,1) -- (1,0) node[midway,sloped,above] {\footnotesize{}};
		\draw[dashed,-] (0,-1) -- (1,0) node[midway,sloped,above] {\footnotesize{}};
		\draw[dashed,-] (3,0) -- (4.2,0) node[midway,sloped,above] {\footnotesize{}};
		\draw[dashed,-] (3,0) -- (4,1) node[midway,sloped,above] {\footnotesize{}};
		\draw[dashed,-] (3,0) -- (4,-1) node[midway,sloped,above] {\footnotesize{}};
		\fill (1.0cm,0cm) circle (1.5pt);  \fill (3cm,0cm) circle (1.5pt);
		
		\node at (1.0,-0.4) (nodeA) {{$\< y_{1} \> $}};

		\node at (3,-0.4) (nodeP){{$\< y_{2} \> $}}; 
	\end{tikzpicture} 
	\end{minipage}
\begin{minipage}{80pt}
\begin{tikzpicture}
\hspace*{-0.01cm}
	\draw[ultra thick,->] (1,0) -- (3,0) node[midway,sloped,above] {\footnotesize{Tietze}};
\end{tikzpicture} 
\end{minipage}
	\begin{minipage}{150pt}
		\begin{tikzpicture}
		\centering
		\draw[dashed,-] (1,0) -- (3,0) node[sloped,above] {\footnotesize{}};
		\draw[dashed,-] (1,0) -- (2.8,1) node[sloped,above] {\footnotesize{}};
		\draw[dashed,-] (1,0) -- (2.8,-1) node[sloped,above] {\footnotesize{}};
		\draw[dashed,-] (1,0) -- (-0.8,1) node[sloped,above] {\footnotesize{}};
			\draw[dashed,-] (1,0) -- (-0.8,-1) node[sloped,above] {\footnotesize{}};
				\draw[dashed,-] (1,0) -- (-1,0) node[sloped,above] {\footnotesize{}};
		\fill (1.0cm,0cm) circle (1.5pt); 
		
		\node at (1.0,-0.4) (nodeA) {{$\< \a \> $}};

	\end{tikzpicture} 
	\end{minipage}
\end{center}
\vspace{3mm}
Let us now check the relations of $K$. Let $y_{k}$ be a random generator that corresponds to a vertex in $T$ and assume that the following path connects the vertices $\< y_{1} \>$ and $\<y_{k} \>$. 
	\begin{center}

	\vspace{3mm}
	
	\begin{tikzpicture}
		
		\draw[thick,-] (1,0) -- (4,0) node[midway,sloped,above] {\footnotesize{$n_{1}\hspace{10mm}m_{1}$}};
		\draw[thick,-] (4,0) -- (7,0) node[midway,sloped,above] {\footnotesize{$n_{2}\hspace{10mm}m_{2}$}};
		
		\fill (1.0cm,0cm) circle (1.5pt);  \fill (4cm,0cm) circle (1.5pt);
		\fill (7cm,0cm) circle (1.5pt);
		
		\fill (8cm,0cm) circle (1.2pt);
		\fill (8.5cm,0cm) circle (1.2pt);
		\fill (9cm,0cm) circle (1.2pt);
		\fill (10cm,0cm) circle (1.5pt);
		\fill (13cm,0cm) circle (1.5pt);
	
			\draw[thick,-] (10,0) -- (13,0) node[midway,sloped,above] {\footnotesize$n_{k-1}\hspace{10mm}m_{k-1}$};
	
\node at (1.0,-0.4) (nodeA) {{$\< y_{1} \> $}}; 

\node at (4.0,-0.4) (nodeA) {{$\< y_{2} \> $}}; 

\node at (7.0,-0.4) (nodeA) {{$\< y_{3} \> $}};

\node at (10,-0.4) (nodeA) {{$\< y_{k-1} \> $}}; 
\node at (13,-0.4) (nodeP){{$\< y_{k} \> $}};

	\end{tikzpicture} 
	
\end{center}
The first subpath in which we have the corresponding relations
\begin{itemize}
	\item[] $y_{1}^{{n_{1}}}=y_{2}^{{m_{1}}}$
	\item[] $y_{2}^{{n_{2}}}=y_{3}^{{m_{2}}}$
	\item[] $y_{1}^{\frac{n_{1}n_{2}}{d_{1,2}}}=y_{3}^{\frac{m_{1}m_{2}}{d_{1,2}}}$
\end{itemize}
will be be replaced by the following segment

\begin{center}
	\vspace{3mm}

	\begin{minipage}{180pt}
		
		\begin{tikzpicture}[scale=0.8]
			\centering
			
		\draw[thick,-] (1,0) -- (4,0) node[midway,sloped,above] {\footnotesize{$n_{1}\hspace{10mm}m_{1}$}};
		\draw[thick,-] (4,0) -- (7,0) node[midway,sloped,above] {\footnotesize{$n_{2}\hspace{10mm}m_{2}$}};
		
		\fill (1.0cm,0cm) circle (1.5pt);  \fill (4cm,0cm) circle (1.5pt);
		\fill (7cm,0cm) circle (1.5pt);

		\node at (1.0,-0.4) (nodeA) {{$\< y_{1} \> $}}; 
		
		\node at (4.0,-0.4) (nodeA) {{$\< y_{2} \> $}}; 
		
		\node at (7.0,-0.4) (nodeA) {{$\< y_{3} \> $}};  
		\end{tikzpicture} 
	\end{minipage}
	\begin{minipage}{80pt}
		\begin{tikzpicture}
			\centering
			\draw[ultra thick,->] (5,1) -- (6.5,1) node[midway,sloped,above] {\footnotesize{Tietze}};
		\end{tikzpicture} 
	\end{minipage}
	\begin{minipage}{150pt}
		\begin{tikzpicture}
			\centering
			\draw[thick,-] (1,0) -- (3,0) node[sloped,above] {\footnotesize{}};
			
			\fill (1.0cm,0cm) circle (1.5pt); 
				\fill (3.0cm,0cm) circle (1.5pt); 
			\node at (1.0,-0.4) (nodeA) {{$\< \a \> $}}; 
				\node at (3.0,-0.4) (nodeA) {{$\< y_{3} \> $}};

		\end{tikzpicture} 
	\end{minipage}
\end{center}
where the corresponding relations now will be 
\begin{itemize}
	\item[]$\a ^{{n_{1}n_{2}}}=y_{3}^{{m_{2}}} \hspace{3mm}(1)$
	\item[] $\a^{m_{1}\cdot \frac{n_{1}n_{2}}{d_{1,2}}}=y_{3}^{\frac{m_{1}m_{2}}{d_{1,2}}}\hspace{3mm}(2)$
\end{itemize}
We know that that $d_{1,2}= gcd(n_{1},m_{2})\cdot gcd(n_{2},m_{1})$. 

Let $d_{1}=gcd(n_{1},m_{2})$ and $d_{2}=gcd(n_{2},m_{1})$. Thus 

\[
\begin{array}{lll}
	m_{2}=d_{1}m_{2}'& \text{and} & 	n_{1}=d_{1}n_{1}' \\
		n_{2}=d_{2}n_{2}'& \text{and} & 	m_{1}=d_{2}m_{1}'
\end{array} \] 
Hence $(1)$ now becomes $\a^{d_{1}(n_{1}'n_{2})}=y_{3}^{d_{1}m_{2}'}\hspace{3mm} (1')$. \\ While $(2)$ becomes $\a^\frac{d_{2}m_{1}'d_{2}n_{1}'n_{2}}{d_{1}d_{2}}=y_{3}^{\frac{d_{2}m_{1}'d_{1}m_{2}'}{d_{1}d_{2}}}\Rightarrow \a^{m_{1}'(n_{1}'n_{2})}=y_{3}^{m_{1}'(m_{2}')}\hspace{3mm}(2')$\\

Now since $gcd(d_{1},m_{1})=1$ we have that the relation that finally corresponds to the segment between $\< \a \>$ and 
$\< y_{3} \> $ is the relation $\a^{\nu}=y_{3}^{\mu}$, where $\nu=n_{1}'n_{2}$ and $\mu=m_{2}'$. We notice that since $gcd(m_{2}',n_{2})=1$ and  $gcd(m_{2}',n_{1}')=1$ we also conclude that $gcd(\nu,\mu)=1.$ 

The relations between the generator $y_{k}$ with the generators $ y_{1}$ and $y_{2}$ are the following.

\begin{enumerate}
	\item[]  $y_{1}^{\frac{n_{1}\cdots n_{k-1}}{d_{1,k-1}}}=y_{k}^{\frac{m_{1}\cdots m_{k-1}}{d_{1,k-1}}} \hspace{3mm}(3)$
	\item[]  $y_{2}^{\frac{n_{2}\cdots n_{k-1}}{d_{2,k-1}}}=y_{k}^{\frac{m_{2}\cdots m_{k-1}}{d_{2,k-1}}} \hspace{3mm}(4)$

\end{enumerate}
Using the above Tietze transformations, these relations will become 
\begin{enumerate}
	\item[]  $\a^{\frac{m_{1}\cdot n_{1}\cdots n_{k-1}}{d_{1}\cdot d_{1,k-1}}}=y_{k}^{\frac{m_{1}\cdots m_{k-1}}{d_{1,k-1}}} \hspace{3mm}(3')$
	\item[]  $\a^{ \frac{n_{1}\cdot n_{2}\cdots n_{k-1}}{d_{1}d_{2,k-1}}}=y_{k}^{\frac{m_{2}\cdots m_{k-1}}{d_{2,k-1}}} \hspace{3mm}(4')$
	
\end{enumerate}
We will prove that $(3'),(4')$ give us the following relation 
$$\a^{\frac{\nu n_{3}\cdots n_{k-1}}{d}}=y_{k}^{\frac{\mu m_{3}\cdots m_{k-1}}{d}}$$
where $d=gcd(\nu n_{3}\cdots n_{k-1},\mu m_{3}\cdots m_{k-1})=gcd(\frac{n_{1}}{d_{1}}n_{2}n_{3}\cdots n_{k-1},\frac{m_{2}}{d_{1}}m_{3}\cdots m_{k-1})$.

Let $\d=gcd(n_{1}n_{2}\cdots n_{k-1},m_{2}m_{3}\cdots m_{k-1}).$ Then $\d=d_{1}d$. Moreover since $\d|d_{1,k-1}$ we have that $ d_{1,k-1}=\d \cdot \d_{1}$ for some integer $\d_{1}$, while since $d_{2,k-1}|\d$ we have that $\d=d_{2,k-1}\cdot \d_{2}
$ for some integer $\d_{2}$. 

Now the relation $(3')$ can be written as $\a ^{(\frac{n_{1}\cdots n_{k-1}}{\d})\frac{m_{1}}{\d_{1}}}= y_{k} ^{(\frac{m_{2}\cdots m_{k-1}}{\d})\frac{m_{1}}{\d_{1}}}$, while the relation $(4')$ can be written as $\a ^{(\frac{n_{1}\cdots n_{k-1}}{\d})\d_{2}}= y_{k} ^{(\frac{m_{2}\cdots m_{k-1}}{\d})\d_{2}}$. But since $gcd(\frac{m_{1}}{\d_{1}},\d_{2})=1$ we have that  $\a ^{(\frac{n_{1}\cdots n_{k-1}}{\d})}= y_{k} ^{(\frac{m_{2}\cdots m_{k-1}}{\d})}\Leftrightarrow \a ^{(\frac{n_{1}\cdots n_{k-1}}{d_{1}d})}= y_{k} ^{(\frac{m_{2}\cdots m_{k-1}}{d_{1}d})}$. Finally, since $n_{1}n_{2}=d_{1}\nu$ and $m_{2}=d_{1}\mu$, we have $\a^{\frac{\nu n_{3}\cdots n_{k-1}}{d}}=y_{k}^{\frac{\mu m_{3}\cdots m_{k-1}}{d}}$.
\vspace{2mm}

If on the other hand the path that connects the vertices $\< y_{1} \>$ and $\<y_{k} \>$ does not contain the vertex $\< y_{2} \>$ it suffices to interchange the roles of $\< y_{1} \>$ and $\<y_{2} \>$ in the above procedure. Therefore we proved our claim.

Consider now the abelian subgroup $K$ generated by $\{ x_{1}^{p^{M_{1}}},  x_{2}^{p^{M_{2}}},\dots, x_{N}^{p^{M_{N}}}\}$ as a subgroup of $\tilde{G}$ and let $\hat{G}=\tilde{G}/{K}$. We will prove that $\hat{G}$ is also residually a finite $p$-group. First, since the relation  $x_{i}^{p^{M_{i}}}=1$ holds for every $i\in\{1,\dots,N\}$ we can remove all the commutators from the presentation of $\hat{G}$.

 Moreover we claim that every relation between generators that correspond to non-adjacent vertices arises from relations between adjacent vertices and therefore these relations can also be removed from the presentation. Indeed if
that vertices that correspond to $\<x_{i}\>$ and  $\<x_{j}\>$ are connected by the following path

\begin{center}

	\vspace{1mm}
	
	\begin{tikzpicture}
		
		\draw[thick,-] (1,0) -- (4,0) node[midway,sloped,above] {\footnotesize{$p^{s_{i_{1}}}n_{i_{1}}\hspace{7mm}p^{r_{i_{1}}}m_{i_{1}}$}};
		\draw[thick,-] (4,0) -- (7,0) node[midway,sloped,above] {\footnotesize{$p^{s_{i_{2}}}n_{i_{2}}\hspace{7mm}p^{r_{i_{2}}}m_{i_{2}}$}};
		
		\fill (1.0cm,0cm) circle (1.5pt);  \fill (4cm,0cm) circle (1.5pt);
		\fill (7cm,0cm) circle (1.5pt);
		
		\fill (8cm,0cm) circle (1.2pt);
		\fill (8.5cm,0cm) circle (1.2pt);
		\fill (9cm,0cm) circle (1.2pt);
		\fill (10cm,0cm) circle (1.5pt);
		\fill (13cm,0cm) circle (1.5pt);
		\draw[thick,-] (10,0) -- (13,0) node[midway,sloped,above] {\footnotesize{$p^{s_{i_{k}}}n_{i_{k}}\hspace{7mm}p^{r_{i_{k}}}m_{i_{k}}$}};

		\node at (1.0,-0.4) (nodeA) {{$\< x_{i} \> $}};

		\node at (13,-0.4) (nodeP){{$\< x_{j} \> $}};

	\end{tikzpicture} 
	
\end{center}
then using the relations between the adjacent vertices we have the following relation between $x_{i}$ and $x_{j}$.
  $$x_{i}^{p^{s_{i}}{n_{i_{1}}\cdots n_{i_{k}}}}=x_{j}^{p^{r_{j}}{m_{i_{1}}\cdots m_{i_{k}}}}$$
Let $M_{i,j}=max\{M_{i},M_{j}\}$. If $d_{i_{1},i_{k}}=gcd(n_{i_{1}}\cdots n_{i_{k}},m_{i_{1}}\cdots m_{i_{k}})$, then $gcd(d_{i_{1},i_{k}},p^{M_{i,j}})=1$ and thus there are $z,w\in\Z$ such that $d_{i_{1},i_{k}}\cdot z+p^{M_{i,j}}\cdot w=1$. Therefore, raising the above relation to $z$ we have
 $$x_{i}^{p^{s_{i}}\frac{n_{i_{1}}\cdots n_{i_{k}}}{d_{i_{1},i_{k}}}(d_{i_{1},i_{k}}\cdot z)}=x_{j}^{p^{r_{j}}\frac{m_{i_{1}}\cdots m_{i_{k}}}{d_{i_{1},i_{k}}}(d_{i_{1},i_{k}}\cdot z)}\Leftrightarrow$$
  $$x_{i}^{p^{s_{i}}\frac{n_{i_{1}}\cdots n_{i_{k}}}{d_{i_{1},i_{k}}}(1-p^{M_{i,j}}\cdot w)}=x_{j}^{p^{r_{j}}\frac{m_{i_{1}}\cdots m_{i_{k}}}{d_{i_{1},i_{k}}}(1-p^{M_{i,j}}\cdot w)}$$
But since $x_{i}^{p^{M_{i,j}}}=x_{j}^{p^{M_{i,j}}}=1$ we get $x_{i}^{p^{s_{i}}\frac{n_{i_{1}}\cdots n_{i_{k}}}{d_{i_{1},i_{k}}}}=x_{j}^{p^{r_{j}}\frac{m_{i_{1}}\cdots m_{i_{k}}}{d_{i_{1},i_{k}}}}$ and so our claim holds. Notice now that $\hat{G}$ is a fundamental group of a tree $T$ of finite cyclic $p$-groups. Therefore, by Proposition \ref{ptree} we have that $\hat{G}$ is indeed residually a finite $p$-group.

Now let $1\neq g\in \tilde{G}$ and  let $\nu_{1}, \nu_{2}$ be the natural epimorphisms from $\tilde{G}$ to $\hat{G}$ and $\tilde{G}^{ab}$ respectively. If $g\notin {K}=Ker\nu_{1}$ then $\nu_{1}(g)$ is not trivial. On the other hand, if $g\in {K}$, then since $\nu_{2}(K)=K$ which is isomorphic to $\Z$, and so residually a finite $p$-group, we have that $\nu_{2}(g)$ is not trivial. Therefore we conclude that $\tilde{G}$ is residually a finite $p$-group.
 \end{proof}

\section{An example}
We now provide an example where we calculate the intersections $\g_{\omega}(G)$ and $(N_{p})_{\om}(G)$, where $G$ is a GBS tree group. 
	
	Let $G$ be the GBS group presented by the following labeled tree. 
	\begin{center}

		\begin{tikzpicture}
			
			\draw[thick,-] (1.1,0) -- (4,0) node[midway,sloped,above] {\footnotesize{$42\hspace{12mm}30$}};
			
			\draw[ thick,-] (4,0) -- (7,1.7) node[midway,sloped,above] {\footnotesize{$14\hspace{15mm}3$}};
			\draw[ thick,-] (4,0) -- (7,-1.7) node[midway,sloped,above] {\footnotesize{$21\hspace{14mm}12$}};

			\draw[thick,-] (7,1.7) -- (10,1.7)node[midway,sloped,above] {\footnotesize{$10\hspace{12mm}15$}};

			\fill (1.1cm,0cm) circle (1.5pt);  \fill (4cm,0cm) circle (1.5pt);
			\fill (7cm,1.7cm) circle (1.5pt);\fill (7cm,-1.7cm) circle (1.5pt);
			\fill (10cm,1.7cm) circle (1.5pt);

			\node at (1.1,-0.4) (nodeA) {{$\< \a \> $}}; 
			\node at (4,-0.4) (nodeB) {{$\< \b \> $}}; 
			
			\node at (7.3,-1.9) (nodeP) {{$\< \d \> $}}; 
			\node at (7.2,1.4) (nodeP) {{$\< \g \> $}};

			\node at (10.4,1.5) (nodeP){{$\< \e \> $}};

		\end{tikzpicture} 
	\end{center}
	The group $G$ has the following presentation. 
	
	$$G=\< \a,\b,\g,\d,\e \hspace{1mm}|\hspace{1mm}\a^{42} =\b^{30}, \b^{21} =\d^{12}, \b^{14} =\g^3, \g^{10} = \e^{15} \> $$
	We calculate $\gamma_{\omega}(G)$ and $(N_{3})_{\om}(G)$.

 \subsection{Calculation of $\gamma_{\omega}(G)$}
	Using the relations of $G$, the center $Z(G)$ of $G$ is the cyclic subgroup $$Z(G)= \< \a^{588} \> =  \< \b^{420} \> =  \< \g^{90} \> =   \< \d^{240} \> =  \< \e^{135} \>. $$
	
	Let $\bar{G}$ be the group $G/Z(G)$.  Using the following isomorphisms \\
	\vspace{3mm}
	$  \begin{array}{rcl}
		f_{1}: \mathbb{Z}_{588} &  \xrightarrow{\cong} & \mathbb{Z}_{2^2}\times\mathbb{Z}_{3}\times\mathbb{Z}_{7^2} \\ \alpha &
		\mapsto & x_{1}^{-5}x_{2}^{25}x_{3}^{3} 
		
	\end{array}   $

	$  \begin{array}{rcl}
		f_{2}: \mathbb{Z}_{420} &  \xrightarrow{\cong} & \mathbb{Z}_{2^2}\times\mathbb{Z}_{3}\times\mathbb{Z}_{5}\times\mathbb{Z}_{7} \\ \b &
		\mapsto & y_{1}^{85}y_{2}^{-85}y_{3}^{34}y_{4}^{2}
		
	\end{array}   $

	$  \begin{array}{rcl}
		f_{3}: \mathbb{Z}_{90} &  \xrightarrow{\cong} & \mathbb{Z}_{2}\times\mathbb{Z}_{3^2}\times\mathbb{Z}_{5} \\ \g &
		\mapsto & w_{1}^{-7}w_{2}^{28}w_{3}^{2} 
		
	\end{array}   $

	$  \begin{array}{rcl}
		f_{4}: \mathbb{Z}_{240} &  \xrightarrow{\cong} & \mathbb{Z}_{2^4}\times\mathbb{Z}_{3}\times\mathbb{Z}_{5} \\ \d &
		\mapsto & z_{1}^{95}z_{2}^{-19}z_{3}^{2} 
		
	\end{array}   $

	$  \begin{array}{rcl}
		f_{5}: \mathbb{Z}_{135} &  \xrightarrow{\cong} & \mathbb{Z}_{3^3}\times\mathbb{Z}_{5} \\ \e &
		\mapsto & t_{1}^{11}t_{2}^{-2} 
		
	\end{array}   $

	we have that $\bar{G}$ is isomorphic to  $\hat{G}$ with presentation $$\hat{G}=\< S_{\hat{G}} \hspace{1mm}|\hspace{1mm} R_{ord}, R_{com},R_{amalg} \>$$ where
	\begin{itemize} 
		\item[] $S_{\hat{G}}$ is the generating set 
		$$\{ x_{1},x_{2},x_{3},y_{1},y_{2},y_{3},y_{4},z_{1},z_{2},z_{3},w_{1},w_{2},w_{3},t_{1},t_{2}\}$$ 
		\item[] ${R}_{ord}$ are the relators
		$$\{ x_{1}^{4},,x_{2}^3,x_{3}^{49},y_{1}^4,y_{2}^3,y_{3}^5,y_{4}^7,z_{1}^{16},z_{2}^3,z_{3}^5,w_{1}^2,w_{2}^{9},w_{3}^5,t_{1}^{27},t_{2}^5\}$$ 
		\item[] ${R}_{com}$ are the relators

		\begin{center}
			$\begin{Bmatrix}
				[x_{1},x_{2}],[x_{1},x_{3}],[x_{2},x_{3}],[y_{1},y_{2}],[y_{1},y_{3}],[y_{1},y_{4}],[y_{2},y_{3}],[y_{2},y_{4}],\\
				[y_{3},y_{4}], [z_{1},z_{2}],[z_{1},z_{3}],[z_{2},z_{3}],[w_{1},w_{2}],[w_{1},w_{3}],[w_{2},w_{3}]
			\end{Bmatrix}$
		\end{center}
		
		\item[] and  ${R}_{amalg}$ are the relations
		\begin{center}
			$\begin{Bmatrix}
				(x_{1}^{-5}x_{2}^{25}x_{3}^{3} )^{42} =(y_{1}^{85}y_{2}^{-85}y_{3}^{34}y_{4}^{2})^{30}, (y_{1}^{85}y_{2}^{-85}y_{3}^{34}y_{4}^{2})^{21} =(z_{1}^{95}z_{2}^{-19}z_{3}^{2} )^{12},\\
				(y_{1}^{85}y_{2}^{-85}y_{3}^{34}y_{4}^{2})^{14} =(w_{1}^{-7}w_{2}^{28}w_{3}^{2} 
				)^3, (w_{1}^{-7}w_{2}^{28}w_{3}^{2} 
				)^{10} = (t_{1}^{11}t_{2}^{-2})^{15}
			\end{Bmatrix}$
		\end{center}
		Using the relations $R_{ord}$ and $R_{com}$ the above are equivalent to the relations $$\{x_{1}^{2}x_{3}^{28}=y_{1}^{2}y_{4}^{4},\hspace{1mm} y_{1}y_{3}^{4}=z_{1}^{4}z_{3}^{4},\hspace{1mm} y_{1}^{2}y_{2}y_{3}=w_{1}w_{2}^{6}w_{3},\hspace{1mm} w_{2}=t_{1}^{3}\}$$
	\end{itemize} 
	Now, by Corollary \ref{coprime} the following set $S$ is contained in $\gamma_{\om}(\hat{G})$. \vspace*{-5mm}
	\begin{center}
		$\begin{Bmatrix}
			[x_{1},y_{2}],[x_{1},y_{3}],[x_{1},y_{4}],[x_{1},z_{2}],[x_{1},z_{3}],[x_{1},w_{2}],[x_{1},w_{3}],[x_{1},t_{1}],[x_{1},t_{2}],[x_{2},y_{1}],[x_{2},y_{3}],\\
			[x_{2},y_{4}],[x_{2},z_{1}],[x_{2},z_{3}], [x_{2},w_{1}],[x_{2},w_{3}],[x_{2},t_{2}],[x_{3},y_{1}],[x_{3},y_{2}],[x_{3},y_{3}],[x_{3},z_{1}],[x_{3},z_{2}],\\
			[x_{3},z_{3}], [x_{3},w_{1}],[x_{3},w_{2}],[x_{3},w_{3}],[x_{3},t_{1}],[x_{3},t_{2}],[y_{1},z_{2}],[y_{1},z_{3}],[y_{1},w_{2}],[y_{1},w_{3}], [y_{1},t_{1}],	\\
			[y_{1},t_{2}],[y_{2},z_{1}],[y_{2},z_{3}],[y_{2},w_{1}],[y_{2},w_{3}],[y_{2},t_{2}],[y_{3},z_{1}],[y_{3},z_{2}],[y_{3},w_{1}],[y_{3},w_{2}],[y_{3},t_{1}],\\
			[y_{4},z_{1}],[y_{4},z_{2}],[y_{4},z_{3}],[y_{4},w_{1}],[y_{4},w_{2}],[y_{4},w_{3}], [y_{4},t_{1}],[y_{4},t_{2}],[z_{1},w_{2}],[z_{1},w_{3}],[z_{1},t_{1}],\\
			[z_{1},t_{2}],[z_{2},w_{1}],[z_{2},w_{3}],[z_{2},t_{2}], [z_{3},w_{1}],[z_{3},w_{2}],[z_{3},t_{1}],[w_{1},t_{1}],[w_{1},t_{2}],[w_{2},t_{2}],[w_{3},t_{1}]
		\end{Bmatrix}$
	\end{center}

	We prove that the normal closure of $S$ is $\gamma_{\om}(\hat{G})$ by proving that the group $H=\hat{G}/S^{\hat{G}}$ is residually nilpotent. Notice that the set $S$ contains all the commutators between the generators of $\hat{G}$ with coprime orders (some of which may be trivial).

	Let $1\neq g\in H$. Then $g$ can be written as a word $g=\chi_{1}\chi_{2}\dots\chi_{r}$, where each word $\chi_{i}$  is of the form $x_{1}^{\k_{1,i}}x_{2}^{\k_{2,i}}x_{3}^{\k_{3,i}}y_{1}^{\l_{1,i}}y_{2}^{\l_{2,i}}y_{3}^{\l_{3,i}}y_{4}^{\l_{4,i}}z_{1}^{\mu_{1,i}}z_{2}^{\mu_{2,i}}z_{3}^{\mu_{3,i}}w_{1}^{\nu_{1,i}}w_{2}^{\nu_{2,i}}w_{3}^{\nu_{3,i}}t_{1}^{\rho_{1,i}}t_{2}^{\rho_{2,i}}$, for all $i\in \{1,\dots,r\}$.
	Using the relations of $H$, each subword  $\chi_{i}$ can be rewritten as 
	${w}_{i,2}{w}_{i,3}{w}_{i,5}{w}_{i,7}$, where each word ${w}_{i,p}$ is a word that consists only of  generators with orders powers of the prime number $p$. Now $g={w}_{1,2}{w}_{1,3}{w}_{1,5}{w}_{1,7}\dots{w}_{r,2}{w}_{r,3}{w}_{r,5}{w}_{r,7}$.
	Again using the relations of $H$, we have that $[w_{j_{1},p_{1}},w_{j_{2},p_{2}}]=1$ for all  $j_{1}, j_{2}\in \{1,\dots,r\}$ and for all prime numbers $p_{1}\neq p_{2}\in \{2,3,5,7\}$. Hence $g$ can be written as the word $$g={w}_{1,2}{w}_{2,2}\dots {w}_{r,2}{w}_{1,3}{w}_{2,3}\dots {w}_{r,3}{w}_{1,5}{w}_{2,5}\dots {w}_{r,5}{w}_{1,7}{w}_{2,7}\dots {w}_{r,7}.$$
	Finally, let $W_{p}$ be the word ${w}_{1,p}{w}_{2,p}\dots {w}_{r,p}$ for all $p\in \{2,3,5,7\}$,  and thus $g={W}_{2}{W}_{3}{W}_{5}{W}_{7}$.
	Notice that each word ${W}_{p}$ is a word that consists only of  generators with order powers of the prime number $p$. Now since $g\neq 1 $ at least one of the words ${W}_{2}, {W}_{3}, {W}_{5}$ and ${W}_{7}$ is not trivial. 
	
	If  $W_{2}\neq 1$, let $G_{2}$ be the group 
	$$G_{2}=\< \bar{x}_{1}, \bar{y}_{1}, \bar{z}_{1}\bar{w}_{1}\hspace{1mm}|\hspace{1mm}\bar{x}_{1}^{4}=\bar{y}_{1}^{4}=\bar{z}_{1}^{16}=\bar{w}_{1}^{2}=1, \bar{x}_{1}^{2}=\bar{y}_{1}^{2}, \bar{y}_{1}=\bar{z}_{1}^{4}, \bar{y}_{1}^{2}=\bar{w}_{1}\>$$
	That is the fundamental group of the following graph of groups.
	\begin{center}
		\vspace{1mm}
		
		\begin{tikzpicture}
			
			\draw[thick,-] (1.1,0) -- (4,0) node[midway,sloped,above] {\footnotesize{$2\hspace{12mm}2$}};
			
			\draw[ thick,-] (4,0) -- (7,1.7) node[midway,sloped,above] {\footnotesize{$2\hspace{15mm}1$}};
			\draw[ thick,-] (4,0) -- (7,-1.7) node[midway,sloped,above] {\footnotesize{$1\hspace{14mm}4$}};

			\fill (1.1cm,0cm) circle (1.5pt);  \fill (4cm,0cm) circle (1.5pt);
			\fill (7cm,1.7cm) circle (1.5pt);\fill (7cm,-1.7cm) circle (1.5pt);

			\node at (1,-0.4) (nodeA) {{$\< \bar{x}_{1} \>\cong \mathbb{Z}_{4} $}}; 
			\node at (3.5,-0.4) (nodeB) {{$\< \bar{y}_{1} \>\cong \mathbb{Z}_{4} $}};
			
			\node at (8,-1.9) (nodeP) {{$\< \bar{z}_{1} \>\cong \mathbb{Z}_{16} $}};
			\node at (7.9,1.4) (nodeP) {{$\< \bar{w}_{1} \>\cong \mathbb{Z}_{2} $}};

		\end{tikzpicture} 
	\end{center}
	
	Let $\f_{2}$ be the homomorphism $\f_{2} : H \to G_{2}$ that
	maps $x_{1}$ to $\bar{x}_{1}$, $y_{1}$ to $\bar{y}_{1}$, $z_{1}$ to $\bar{z}_{1}$, $w_{1}$ to $\bar{w}_{1}$ and the remaining generators of $H$ to $1$. (The map $\f_{2}$ is indeed a homomorphism since it preserves the relations of $H$). Since $W_{2}\neq1 $ we have that $\phi_{2}(W_{2})\neq 1$ and hence $\phi_{2}(g)\neq 1$. Since the group $G_{2}$ is residually nilpotent (by Proposition \ref{ptree}) we have that $H$ is also residually nilpotent.

	If  $W_{3}\neq 1$, let $G_{3}$ be the group 
	$$G_{3}=\< \bar{x}_{2}, \bar{y}_{2}, \bar{z}_{2}\bar{w}_{2},\bar{t}_{1}\hspace{1mm}|\hspace{1mm}\bar{x}_{2}^{3}=\bar{y}_{2}^{3}=\bar{z}_{2}^{3}=\bar{w}_{2}^{9}=\bar{t}_{1}^{27}=1, \bar{y}_{2}=\bar{w}_{2}^{6}, \bar{w}_{2}=\bar{t}_{1}^{3}\>$$
	That is the group $\<\bar{x}_{2}\> * \<\bar{z}_{2}\> * \bar{G}_{3} $, where $\<\bar{x}_{2}\>\cong \mathbb{Z}_{3}$, $\<\bar{z}_{2}\>\cong \mathbb{Z}_{3}$ and $\bar{G}_{3}$ is fundamental group of the following graph of groups.
	\begin{center}
		\vspace{1mm}
		
		\begin{tikzpicture}
			
			\draw[thick,-] (1.1,0) -- (4,0) node[midway,sloped,above] {\footnotesize{$1\hspace{16mm}6$}};
			
			\draw[ thick,-] (4,0) -- (7,0) node[midway,sloped,above] {\footnotesize{$1\hspace{16mm}3$}};
			
			\fill (1.1cm,0cm) circle (1.5pt);  \fill (4cm,0cm) circle (1.5pt); \fill (7cm,0cm) circle (1.5pt);

			\node at (1,-0.4) (nodeA) {{$\< \bar{y}_{2} \>\cong \mathbb{Z}_{3} $}}; 
			\node at (3.9,-0.4) (nodeB) {{$\< \bar{w}_{2} \>\cong \mathbb{Z}_{9} $}};
			
			\node at (6.8,-0.4) (nodeP) {{$\< \bar{t}_{1} \>\cong \mathbb{Z}_{27} $}};
			
		\end{tikzpicture} 
	\end{center}
	Notice that $\bar{G}_{3}$ is isomorphic to the group  $\<\bar{t}_{1}\>\cong \mathbb{Z}_{27}$, since we can remove the generators $\bar{y}_{2}$ and $\bar{w}_{2}$ from the presentation, using Tietze transformations.
	
	Let $\f_{3}$ be the homomorphism $\f_{3} : H \to G_{3}$ that
	maps $x_{2}$ to $\bar{x}_{2}$, $y_{2}$ to $\bar{y}_{2}$, $z_{2}$ to $\bar{z}_{2}$, $w_{2}$ to $\bar{w}_{2}$,  $t_{1}$ to $\bar{t}_{1}$ and the remaining generators of $H$ to $1$. Since $W_{3}\neq1 $ we have that $\phi_{3}(W_{2})\neq 1$ and hence $\phi_{3}(g)\neq 1$. Now since the group $\bar{G}_{3}$ is residually a finite $3$-group by Proposition \ref{ptree} and therefore the free product $\<\bar{x}_{2}\> * \<\bar{z}_{2}\> * \bar{G}_{3} $ is residually a finite $3$-group and thus residually nilpotent. Hence, the group $H$ is also residually nilpotent.

	If  $W_{5}\neq 1$, let $G_{5}$ be the group 
	$$G_{5}=\<  \bar{y}_{3}, \bar{z}_{3}\bar{w}_{3},\bar{t}_{2}\hspace{1mm}|\hspace{1mm}\bar{y}_{3}^{5}=\bar{z}_{3}^{5}=\bar{w}_{3}^{5}=\bar{t}_{2}^{5}=1, \bar{y}_{3}^{4}=\bar{z}_{3}^{4}, \bar{y}_{3}=\bar{t}_{3}\>$$
	That is the group $\<\bar{t}_{2}\>  * \bar{G}_{5} $, where $\<\bar{t}_{2}\>\cong \mathbb{Z}_{5}$, $\<\bar{z}_{2}\>\cong \mathbb{Z}_{3}$ and $\bar{G}_{3}$ is fundamental group of the following graph of groups.
	\begin{center}

		\begin{tikzpicture}
			
			\draw[thick,-] (1.1,0) -- (4,0) node[midway,sloped,above] {\footnotesize{$4\hspace{16mm}4$}};
			
			\draw[ thick,-] (4,0) -- (7,0) node[midway,sloped,above] {\footnotesize{$1\hspace{16mm}1$}};
			
			\fill (1.1cm,0cm) circle (1.5pt);  \fill (4cm,0cm) circle (1.5pt); \fill (7cm,0cm) circle (1.5pt);

			\node at (1,-0.4) (nodeA) {{$\< \bar{z}_{3} \>\cong \mathbb{Z}_{5} $}}; 
			\node at (3.9,-0.4) (nodeB) {{$\< \bar{y}_{3} \>\cong \mathbb{Z}_{5} $}};
			
			\node at (6.8,-0.4) (nodeP) {{$\< \bar{w}_{3} \>\cong \mathbb{Z}_{5} $}};
			
		\end{tikzpicture} 
	\end{center}
	Again, using Tietze transformations we can remove the generators $\bar{y}_{3}$ and $\bar{w}_{3}$ and thus the group $\bar{G}_{5}$ is isomorphic to the group  $\<\bar{z}_{3}\>\cong \mathbb{Z}_{5}$.

	Let $\f_{5}$ be the homomorphism $\f_{5} : H \to G_{5}$ that
	maps  $y_{3}$ to $\bar{y}_{3}$, $z_{3}$ to $\bar{z}_{3}$, $w_{3}$ to $\bar{w}_{3}$,  $t_{2}$ to $\bar{t}_{2}$ and the rest of the generators of $H$ to $1_{G_{5}}$. Since $W_{5}\neq1 $ we have that $\phi_{3}(W_{5})\neq 1$ and hence $\phi_{5}(g)\neq 1$. Now since the groups $\<\bar{t}_{2}\>$ and $\bar{G}_{5}$ are residually finite $5$-groups, their free product $\<\bar{t}_{2}\> *  \bar{G}_{5}$ is also residually a finite $5$-group and thus residually nilpotent. Hence, the group $H$ is also residually nilpotent.

	Finally, if  $W_{5}\neq 1$, let $G_{5}$ be the group 
	$$G_{7}=\<  \bar{x}_{3}, \bar{y}_{4}\hspace{1mm}|\hspace{1mm}\bar{x}_{3}^{49}=\bar{y}_{4}^{7}=1, \bar{x}_{3}^{28}=\bar{y}_{4}^{4}\>$$
	That is the fundamental group of the following graph of groups.
	\begin{center}

		\begin{tikzpicture}
			
			\draw[thick,-] (1.1,0) -- (4,0) node[midway,sloped,above] {\footnotesize{$28\hspace{16mm}4$}};

			\fill (1.1cm,0cm) circle (1.5pt);  \fill (4cm,0cm) circle (1.5pt);
			
			\node at (1,-0.4) (nodeA) {{$\< \bar{x}_{3} \>\cong \mathbb{Z}_{49} $}}; 
			\node at (3.9,-0.4) (nodeB) {{$\< \bar{y}_{4} \>\cong \mathbb{Z}_{7} $}};
			
		\end{tikzpicture} 
	\end{center}
	Let $\f_{7}$ be the homomorphism $\f_{7} : H \to G_{7}$ that
	maps  $x_{3}$ to $\bar{x}_{3}$, $y_{4}$ to $\bar{y}_{4}$ and the rest of the generators of $H$ to $1_{G_{5}}$. Since $W_{7}\neq1 $ we have that  $\phi_{7}(g)\neq 1$. Now since the group $G_{7}$ is residually nilpotent (by Proposition \ref{ptree}) we have that $H$ is also residually nilpotent.

	Therefore,  $\gamma_{\om}(\hat{G})=S^{\hat{G}}$. Now using the inverse of the isomorphisms $f_{1},f_{2},f_{3},f_{4}$ and $f_{5}$ we will calculate $\gamma_{\om}(\bar{G})$ which is equal to  $\gamma_{\om}({G})$.

	$  \begin{array}{rcl}
		f_{1}^{-1}: \mathbb{Z}_{2^2}\times\mathbb{Z}_{3}\times\mathbb{Z}_{7^2} &  \xrightarrow{\cong} &\mathbb{Z}_{588}  \\ x_{1} &
		\mapsto & \a^{3\cdot 7^2} \\  x_{2} &
		\mapsto & \a^{2^{2}\cdot 7^2} \\  x_{3} &
		\mapsto & \a^{2^{2}\cdot 3}
		
	\end{array}   $

	$  \begin{array}{rcl}
		f_{2}^{-1}:  \mathbb{Z}_{2^2}\times\mathbb{Z}_{3}\times\mathbb{Z}_{5}\times\mathbb{Z}_{7}&  \xrightarrow{\cong} &  \mathbb{Z}_{420}  \\ y_{1} &
		\mapsto & \b^{3\cdot5\cdot7} \\  y_{2} &
		\mapsto & \b^{2^{2}\cdot5\cdot7} \\  y_{3} &
		\mapsto & \b^{2^{2}\cdot3\cdot7}\\  y_{4} &
		\mapsto & \b^{2^{2}\cdot3\cdot5}
		
	\end{array}   $

	$  \begin{array}{rcl}
		f_{3}^{-1}: \mathbb{Z}_{2}\times\mathbb{Z}_{3^2}\times\mathbb{Z}_{5}   &  \xrightarrow{\cong} &\mathbb{Z}_{90} \\ w_{1} &
		\mapsto & \g^{3^{2}\cdot 5} \\  w_{2} &
		\mapsto & \g^{2\cdot 5} \\  w_{3} &
		\mapsto & \g^{2\cdot 3^{2}}
		
	\end{array}   $

	$  \begin{array}{rcl}
		f_{4}^{-1}: \mathbb{Z}_{2^4}\times\mathbb{Z}_{3}\times\mathbb{Z}_{5} &  \xrightarrow{\cong} & \mathbb{Z}_{240}  \\ z_{1} &
		\mapsto & \d^{3\cdot 5} \\  z_{2} &
		\mapsto & \d^{2^{4}\cdot 5} \\  z_{3} &
		\mapsto & \d^{2^{4}\cdot 3}
		
	\end{array}   $

	$  \begin{array}{rcl}
		f_{5}^{-1}: \mathbb{Z}_{3^3}\times\mathbb{Z}_{5} &  \xrightarrow{\cong} &\mathbb{Z}_{135}  \\ t_{1} &
		\mapsto & \e^5 \\  t_{2} &
		\mapsto & \e^{3^3}
		
	\end{array}   $

	Consider the following subset $S_{x}$ of $S$.

	\begin{center}
		$\begin{Bmatrix}
			[x_{1},y_{2}],[x_{1},y_{3}],[x_{1},y_{4}],[x_{1},z_{2}],[x_{1},z_{3}],[x_{1},w_{2}],[x_{1},w_{3}],\\ [x_{1},t_{1}],[x_{1},t_{2}],[x_{2},y_{1}],[x_{2},y_{3}], [x_{2},y_{4}],[x_{2},z_{1}],[x_{2},z_{3}],\\ [x_{2},w_{1}],[x_{2},w_{3}],[x_{2},t_{2}],[x_{3},y_{1}],[x_{3},y_{2}],[x_{3},y_{3}],[x_{3},z_{1}],\\
			[x_{3},z_{2}],[x_{3},z_{3}], [x_{3},w_{1}],[x_{3},w_{2}],[x_{3},w_{3}],[x_{3},t_{1}],[x_{3},t_{2}]
		\end{Bmatrix}$
	\end{center} 
	Therefore the following set $S_{1}$ belongs to $\gamma_{\om}(G)$.
	\vspace*{-3mm}
	\begin{center}
		$\begin{Bmatrix}
			[\a^{3\cdot 7^2} ,\b^{2^{2}\cdot5\cdot7}],[\a^{3\cdot 7^2} ,\b^{2^{2}\cdot3\cdot7}],[\a^{3\cdot 7^2} ,\b^{2^{2}\cdot3\cdot5}
			],[\a^{3\cdot 7^2} ,\d^{2^{4}\cdot 5}],[\a^{3\cdot 7^2} ,\d^{2^{4}\cdot 3}],[\a^{3\cdot 7^2} , \g^{2\cdot 5}],[\a^{3\cdot 7^2} ,\g^{2\cdot 3^{2}}],\\ [\a^{3\cdot 7^2} ,\e^5],[\a^{3\cdot 7^2} ,\e^{3^3}],[\a^{2^{2}\cdot 7^2} ,\b^{3\cdot5\cdot7}],[\a^{2^{2}\cdot 7^2} ,\b^{2^{2}\cdot3\cdot7}], [\a^{2^{2}\cdot 7^2} ,\b^{2^{2}\cdot3\cdot5}],[\a^{2^{2}\cdot 7^2} ,\d^{3\cdot 5}],[\a^{2^{2}\cdot 7^2} ,\d^{2^{4}\cdot 3}],\\ [\a^{2^{2}\cdot 7^2} ,\g^{3^{2}\cdot 5}],[\a^{2^{2}\cdot 7^2} ,\g^{2\cdot 3^{2}}],[\a^{2^{2}\cdot 7^2} ,\e^{3^3}],[\a^{2^{2}\cdot 3} ,\b^{3\cdot5\cdot7}],[\a^{2^{2}\cdot 3} ,\b^{2^{2}\cdot5\cdot7}],[\a^{2^{2}\cdot 3} ,\b^{2^{2}\cdot3\cdot7}],[\a^{2^{2}\cdot 3} ,\d^{3\cdot 5}],\\
			[\a^{2^{2}\cdot 3} ,\d^{2^{4}\cdot 5}],[\a^{2^{2}\cdot 3} ,\d^{2^{4}\cdot 3}], [\a^{2^{2}\cdot 3} ,\g^{3^{2}\cdot 5}],[\a^{2^{2}\cdot 3} , \g^{2\cdot 5}],[\a^{2^{2}\cdot 3} ,\g^{2\cdot 3^{2}}],[\a^{2^{2}\cdot3},\e^5],[\a^{2^{2}\cdot3},\e^{3^3}]
		\end{Bmatrix}$
	\end{center}

	Since the commutators $[\a^{3\cdot 7^2} ,\b^{3\cdot5\cdot7}],[\a^{3\cdot 7^2} ,\b^{2^{2}\cdot3\cdot7}],[\a^{3\cdot 7^2} ,\b^{2^{2}\cdot3\cdot5}
	]$ belong to $\gamma_{\om}(G)$ and $gcd(2^{2}\cdot5\cdot7,2^{2}\cdot3\cdot7,2^{2}\cdot3\cdot5)=2^{2}$ we can replace these three elements by the element  $[\a^{3\cdot 7^2} ,\b^{2^{2}}]$. 
	
	We do the same for the rest of the elements of $S_{1}$.\vspace{3mm}\\
	The elements  $[\a^{3\cdot 7^2} ,\d^{2^{4}\cdot 5}],[\a^{3\cdot 7^2} ,\d^{2^{4}\cdot 3}]$ are replaced by $[\a^{3\cdot 7^2} ,\d^{2^{4}}]$.\\
	The elements $[\a^{3\cdot 7^2} , \g^{2\cdot 5}],[\a^{3\cdot 7^2} ,\g^{2\cdot 3^{2}}]$ are replaced by $[\a^{3\cdot 7^2} , \g^{2}]$.\\
	The elements $[\a^{3\cdot 7^2} ,\e^5],[\a^{3\cdot 7^2} ,\e^{3^3}]$ are replaced by $[\a^{3\cdot 7^2} ,\e]$.\\
	The elements $[\a^{2^{2}\cdot 7^2} ,\b^{3\cdot5\cdot7}],[\a^{2^{2}\cdot 7^2} ,\b^{2^{2}\cdot3\cdot7}], [\a^{2^{2}\cdot 7^2} ,\b^{2^{2}\cdot3\cdot5}]$ are replaced by $[\a^{2^{2}\cdot 7^2} ,\b^{3}]$.\\
	The elements $[\a^{2^{2}\cdot 7^2} ,\d^{3\cdot 5}],[\a^{2^{2}\cdot 7^2} ,\d^{2^{4}\cdot 3}]$ are replaced by $[\a^{2^{2}\cdot 7^2} ,\d^{3}]$.\\
	The elements $ [\a^{2^{2}\cdot 7^2} ,\g^{3^{2}\cdot 5}],[\a^{2^{2}\cdot 7^2} ,\g^{2\cdot 3^{2}}]$ are replaced by $ [\a^{2^{2}\cdot 7^2} ,\g^{3^{2}}]$. \\
	The elements $[\a^{2^{2}\cdot 3} ,\b^{3\cdot5\cdot7}],[\a^{2^{2}\cdot 3} ,\b^{2^{2}\cdot5\cdot7}],[\a^{2^{2}\cdot 3} ,\b^{2^{2}\cdot3\cdot7}]$
	are replaced by $[\a^{2^{2}\cdot 3} ,\b^{7}]$.\\
	The elements $[\a^{2^{2}\cdot 3} ,\d^{3\cdot 5}],
	[\a^{2^{2}\cdot 3} ,\d^{2^{4}\cdot 5}],[\a^{2^{2}\cdot 3} ,\d^{2^{4}\cdot 3}]$ are replaced by $[\a^{2^{2}\cdot 3} ,\d]$.\\
	The elements $[\a^{2^{2}\cdot 3} ,\g^{3^{2}\cdot 5}],[\a^{2^{2}\cdot 3} , \g^{2\cdot 5}],[\a^{2^{2}\cdot 3} ,\g^{2\cdot 3^{2}}]$ are replaced by $[\a^{2^{2}\cdot 3} ,\g]$.\\
	The elements $[\a^{2^{2}\cdot3},\e^5],[\a^{2^{2}\cdot3},\e^{3^3}]$ are replaced by $[\a^{2^{2}\cdot3},\e]$.
	\vspace{3mm}

	Now since $[\a^{2^{2}\cdot3},\e],[\a^{3\cdot 7^2} ,\e]\in \gamma_{\om}(G)$ we have that $[\a^{3},\e]\in \gamma_{\om}(G)$ and thus $[\a^{3},\e^{3^3}]\in \gamma_{\om}(G)$. Therefore, since $[\a^{2^{2}\cdot 7},\e^{3^3}]$
	also belongs to $\gamma_{\om}(G)$ and $gcd(3,{2^{2}\cdot 7})=1$ we have that $[\a,\e^{3^3}]\in \gamma_{\om}(G)$. Working in the same way, we replace $[\a^{2^{2}\cdot 3} ,\g]$, $[\a^{3\cdot 7^2} , \g^{2}]$,  $ [\a^{2^{2}\cdot 7^2} ,\g^{3^{2}}]$ by  $ [\a^{2^{2}} ,\g^{3^{2}}]$,  $ [\a^{3} ,\g^{2}]$ and $[\a^{2^{2}\cdot 3} ,\d]$, $[\a^{2^{2}\cdot 7^2} ,\d^{3}]$,  $[\a^{3\cdot 7^2} ,\d^{2^{4}}]$ by  $[\a^{3} ,\d^{2^{4}}]$ and  $[\a^{2^2} ,\d^{3}]$. Therefore the set $S_{1}$ is reduced to the following.
	$$S_{1}=\Big\{	[\a^{3\cdot 7^2} ,\b^{2^{2}}], [\a^{2^{2}\cdot 7^2} ,\b^{3}], [\a^{2^{2}\cdot 3} ,\b^{7}],
	[\a^{2^{2}} ,\g^{3^{2}}],  [\a^{3} ,\g^{2}], 
	[\a^{2^2} ,\d^{3}], [\a^{3} ,\d^{2^{4}}], 
	[\a^{3},\e], [\a,\e^{3^3}]\Big\}$$

	We work similarly for the rest of the elements of $S$. Let $S_{y}$ be the following subset of $S$. 
	\begin{center}
		$\begin{Bmatrix}
			[y_{1},x_{2}],[y_{1},x_{3}],[y_{1},z_{2}],[y_{1},z_{3}],[y_{1},w_{2}],[y_{1},w_{3}], [y_{1},t_{1}], [y_{1},t_{2}], \\
			[y_{2},x_{1}], [y_{2},x_[3]], [y_{2},z_{1}],[y_{2},z_{3}],[y_{2},w_{1}],[y_{2},w_{3}],[y_{2},t_{2}], \\
			[y_{3},x_{1}], 	[y_{3},x_{2}], [y_{3},x_[3]],[y_{3},z_{1}],[y_{3},z_{2}],[y_{3},w_{1}],[y_{3},w_{2}],[y_{3},t_{1}],\\ 	[y_{4},x_{1}], 	[y_{4},x_{2}],
			[y_{4},z_{1}],[y_{4},z_{2}],[y_{4},z_{3}],[y_{4},w_{1}],[y_{4},w_{2}],[y_{4},w_{3}], [y_{4},t_{1}],[y_{4},t_{2}]
		\end{Bmatrix}$
	\end{center}
	The image of $S_{y}$ under $f_{i}^{-1}$, $i\in \{1,2,3,4,5\}$ gives us (under appropriate reduction) the following set $S_{2}$.

	\begin{center}
		\[
		S_{2} = \left.
		\begin{Bmatrix}	[\b^{3\cdot 7} ,\a^{2^{2}}], [\b^{2^{2}\cdot 7} ,\a^{3}], [\b^{2^{2}\cdot 3} ,\a^{7^2}],
			[\b^{3\cdot 5} ,\g^{2}], [\b^{2^{2}\cdot 5} ,\g^{3^2}], \\ [\b^{2^{2}\cdot 3} ,\g^{5}],[\b^{3\cdot 5} ,\d^{2^{4}}], [\b^{2^{2}\cdot 5} ,\d^{3}], [\b^{2^{2}\cdot 3} ,\d^{5}],[\b^{5} ,\e^{3^{3}}], [\b^{3},\e^{5}]\end{Bmatrix}
		\right.
		\]
		
	\end{center}

	If we do the same for the rest of the subsets $S_{z}$, $S_{w}$ and $S_{t}$ and take the union of their images we end up in the following set, whose normal closure in $G$ is $\gamma_{\om}(G)$.

	\begin{center}
		\[
		\gamma_{\omega}(G) = \left.
		{\begin{Bmatrix}[\a^{3\cdot 7^2} ,\b^{2^{2}}], [\a^{2^{2}\cdot 7^2} ,\b^{3}], [\a^{2^{2}\cdot 3} ,\b^{7}],	[\b^{3\cdot 7} ,\a^{2^{2}}], [\b^{2^{2}\cdot 7} ,\a^{3}], [\b^{2^{2}\cdot 3} ,\a^{7^2}], \\ [\a^{2^{2}} ,\g^{3^{2}}],  [\a^{3} ,\g^{2}], 
				[\a^{2^2} ,\d^{3}], [\a^{3} ,\d^{2^{4}}], 
				[\a^{3},\e], [\a,\e^{3^3}],\\
				[\b^{3\cdot 5} ,\g^{2}], [\b^{2^{2}\cdot 5} ,\g^{3^2}],  [\b^{2^{2}\cdot 3} ,\g^{5}], [\g^{3^{2}\cdot 5} ,\b^{2^2}], [\g^{2\cdot 5} ,\b^{3}],  [\g^{2^\cdot 3^2} ,\b^{5}] \\ [\b^{3\cdot 5} ,\d^{2^{4}}], [\b^{2^{2}\cdot 5} ,\d^{3}], [\b^{2^{2}\cdot 3} ,\d^{5}],[\d^{3\cdot 5} ,\b^{2^{2}}], [\d^{2^{4}\cdot 5} ,\b^{3}], [\d^{2^{4}\cdot 3} ,\b^{5}],\\
				[\b^{5} ,\e^{3^{3}}], [\b^{3},\e^{5}],	[\g^{5} ,\e^{3^{3}}], [\g^{3^2},\e^{5}],	[\d^{5} ,\e^{3^{3}}], [\d^{3},\e^{5}]\\
				[\g^{3^{2}\cdot 5} ,\d^{2^{4}}], [\g^{2\cdot 5} ,\d^{3}], [\g^{2\cdot 3^2} ,\d^{5}],[\d^{3\cdot 5} ,\g^{2}], [\d^{2^{4}\cdot 5} ,\g^{3^2}], [\d^{2^{4}\cdot 3} ,\g^{5}]
		\end{Bmatrix}}^{G}
		\right.
		\]
		
	\end{center}
\

\subsection{Calculation of $(N_{3})_{\om}(G)$}

We consider the ten subgroups of $G$ which can be formed as amalgamated free products  defined by distinct vertex groups.
\begin{itemize}
	\item   $H_{\a}^{\b}=\< \a,\b\hspace{1mm}|\hspace{1mm}\a^{42}=\b^{30}\>$
	
		\item   $H_{\a}^{\g}=\< \a,\g \hspace{1mm}|\hspace{1mm}\a^{294}=\g^{30}\>$
			\item   $H_{\a}^{\d}=\< \a,\d \hspace{1mm}|\hspace{1mm}\a^{294}=\d^{120}\>$
		
			\item   $H_{\a}^{\e}=\< \a,\e \hspace{1mm}|\hspace{1mm}\a^{588}=\e^{135}\>$

			\item   $H_{\b}^{\g}=\< \b,\g \hspace{1mm}|\hspace{1mm}\b^{14}=\g^{3}\>$
			\item   $H_{\b}^{\d}=\< \b,\d \hspace{1mm}|\hspace{1mm}\b^{21}=\d^{12}\>$
			\item   $H_{\b}^{\e}=\< \b,\e \hspace{1mm}|\hspace{1mm}\b^{140}=\e^{45}\>$
			\item   $H_{\g}^{\d}=\< \g,\d \hspace{1mm}|\hspace{1mm}\g^{9}=\d^{24}\>$
			\item   $H_{\g}^{\e}=\< \g,\e \hspace{1mm}|\hspace{1mm}\g^{10}=\e^{15}\>$
			\item   $H_{\d}^{\e}=\< \d,\e \hspace{1mm}|\hspace{1mm}\d^{240}=\e^{135}\>$		
\end{itemize}

Now using Theorem \ref{segmNp} for each such subgroup we can calculate each intersection $(N_{3})_{\om}(H_{i}^{j})$, $i\neq j$. Hence we have the following.

\begin{itemize}
	\item   $(N_{3})_{\om}(H_{\a}^{\b})={\a^{3\cdot7}\b^{-3\cdot 5},[\a,\b^3],[\a^{3},\b]}^{G}$
	
	\item   $(N_{3})_{\om}(H_{\a}^{\g})={[\a,\g^{3^2}],[a^{3},\g]}^{G}$
	\item   $(N_{3})_{\om}(H_{\a}^{\d})={\a^{3\cdot 7^2}\d^{-3\cdot 2^{2}\cdot 5},[\a,\d^{3}],[a^{3},\d]}^{G}$
	
	\item   $(N_{3})_{\om}(H_{\a}^{\e})={[\a,\e^{3^{3}}],[\a^{3},\e]}^{G}$

	\item   $(N_{3})_{\om}(H_{\b}^{\g})={[\b,\g]}^{G}$
	\item   $(N_{3})_{\om}(H_{\b}^{\d})= {[\b,\d^3],[\b^{3},\d]}^{G}$
	\item   $(N_{3})_{\om}(H_{\b}^{\e})={\b^{2^{2}\cdot 7}\e^{-3^2},[\b,\e]}^{G}$
	\item   $(N_{3})_{\om}(H_{\g}^{\d})={[\g,\d^3],[\g^{3^2},\d]}^{G}$
	\item   $(N_{3})_{\om}(H_{\g}^{\e})={\g^{2}\e^{-3},[\g,\e]}^{G}$
	\item   $(N_{3})_{\om}(H_{\d}^{\e})={\d^{3\cdot2^4}\e^{-3^3},[\d,\e^{3^3}],[\d^{3},\e]}^{G}$		
\end{itemize}

Now let $S$ be the union $S=\bigcup(N_{3})_{\om}(H_{i}^{j})$, for all $i,j\in \{\a,\b,\g,\d,\e\}$ with $i\neq j$. We will prove that $\tilde{G}=G/S^G$ is residually a finite $3$-group.
First we calculate the abelianization of $\tilde{G}$ which has the following presentation.
$$\tilde{G}^{ab}=\<\a,\b,\g,\d,\e\hspace{1mm}|\hspace{1mm}R_{amag},R_{com}\>$$
 where $R_{amag}$ are the following relations between the generators:
  \begin{center}
 	$\begin{Bmatrix}
 \a^{21}=\b^{15},\a^{294}=\g^{45},\a^{147}=\d^{60},\a^{588}=\e^{135},\b^{14}=\g^{3},\\ \b^{21}=\d^{12}, \b^{28}=\e^{9},\g^{9}=\d^{24}, \g^{2}=\e^{3},\d^{48}=\e^{27}
\end{Bmatrix}$
\end{center}
and $R_{com}$ are the commutators between the generators:
\begin{center}
	$\begin{Bmatrix}
		[\a,\b],[\a,\g],[\a,\d],[\a,\e],[\b,\g],[\b,\d],[\b,\e],[\g,\d],[\g,\e],[\d,\e]
	\end{Bmatrix}$
\end{center}
Now in order to find which abelian group this is, it is sufficient to calculate the Smith normal form of the following matrix.
\begin{center}

$\begin{array}{lcl}
	A & = &  \begin{bmatrix}
	    21 & -15 & 0 & 0 & 0 \\
		294 & 0 & -45 & 0 & 0 \\
	    147 & 0 & 0 & -60 & 0 \\
	    588 & 0 & 0 & 0 & -135 \\
	    0 & 14 & -3 & 0 & 0 \\
	    0 & 21 & 0 & -12 & 0 \\
	    0 & 28 & 0 & 0 & -9 \\
	    0 & 0 & 9 & -24 & 0 \\
	     0 & 0 & 2 & 0 & -3 \\
	    0 & 0 & 0 & 48 & -27 \\    	    
	\end{bmatrix}
\end{array}$
\end{center}

Applying the algorithm we find that

\begin{center}
 $\begin{array}{lcl}
 	SNF(A) & = &  \begin{bmatrix}
 		1 & 0 & 0 & 0 & 0 \\
 		0 & 1 & 0 & 0 & 0 \\
 		0 & 0 & 3 & 0 & 0 \\
 		0 & 0 & 0 & 3 & 0 \\
 		0 & 0 & 0 & 0 & 0 \\
 		0 & 0 & 0 & 0 & 0 \\
 		0 & 0 & 0 & 0 & 0 \\
 		0 & 0 & 0 & 0 & 0 \\
 		0 & 0 & 0 & 0 & 0 \\
 		0 & 0 & 0 & 0 & 0 \\    	    
 	\end{bmatrix}
 \end{array}$

\end{center}
Therefore we conclude that $\tilde{G}^{ab}\cong\Z\times\Z_{3}\times\Z_{3}$ which is residually a finite $3$-group.

Let us now consider the subgroup $N=\<\a^{3},\b^{3},\g^{9},\d^{3},\e^{27}\>$. Notice that this is an abelian  subgroup of $\tilde{G}$. We define the group $\bar{G}=\tilde{G}/N$ which has the following presentation.
$$\bar{G}=\< \a,\b,\g,\d,\e\hspace{1mm}|\hspace{1mm}\a^{3}=\b^{3}=\g^{9}=\d^{3}=\e^{27}=1,\b^{2}=\g^{3}, \b=\e^{9}, \g^{2}=\e^{3},[\b,\g],[\b,\e],[\g,\e]\>$$
or equivalently
$$ \bar{G}=\Z_{3}* \Z_{3}*\< \b,\g,\e\hspace{1mm}|\hspace{1mm}\b^{3}=\g^{9}=\e^{27}=1,\b^{2}=\g^{3}, \b=\e^{9}, \g^{2}=\e^{3},[\b,\g],[\b,\e],[\g,\e]\>$$
Let $H$ be the group $$H=\< \b,\g,\e\hspace{1mm}|\hspace{1mm}\b^{3}=\g^{9}=\e^{27}=1,\b^{2}=\g^{3}, \b=\e^{9}, \g^{2}=\e^{3},[\b,\g],[\b,\e],[\g,\e]\>$$
One can easily see that using Tietze transformations this group is the cyclic group $\Z_{27}$ which is residually a finite $3$-group and thus $\bar{G}$ is residually a finite $3$-group.

Now, it is sufficient to prove that $N$ as a subgroup of the abelianization $\tilde{G}^{ab}$ is isomorphic to $\Z$. Then if  $1\neq g\in \tilde{G}$ and   $\nu_{1}, \nu_{2}$ be the natural epimorphisms from $\tilde{G}$ to $\bar{G}$ and $\tilde{G}^{ab}$ respectively, either $g\notin {N}=Ker\nu_{1}$ and thus $\nu_{1}(g)$ is not trivial or  $g\in {N}$ and thus  $\nu_{2}(g)$ is not trivial. 

Indeed, let $x_{1}=\a^{3}, x_{2}=\b^{3},x_{3}=\g^{9},x_{4}=\d^{3},x_{5}=\e^{27}$. Then $N$ has the following presentation.
$$N=\< x_{1},x_{2},x_{3},x_{3},x_{5}\hspace{1mm}|\hspace{1mm}\bar{R}_{am},\hspace{1mm}[x_{i},x_{j}]=1\text{ for all }i,j\in\{1,\dots,5\}
\>$$
where $\bar{R}_{am}$ is the following set of relations. $$\big\{x_{1}^{7}=x_{2}^{5},x_{1}^{98}=x_{3}^{5},x_{1}^{49}=x_{4}^{20},x_{1}^{196}=x_{5}^{5},x_{2}^{14}=x_{3},x_{2}^{7}=x_{4}^{4},x_{2}^{28}=x_{5},x_{3}=x_{4}^{8},x_{3}^{2}=x_{2}^{5},x_{4}^{16}=x_{5}\big\}$$
Now in order to find which abelian group this is we again calculate the Smith normal form of the following matrix.
\begin{center}

	$\begin{array}{lcl}
		B & = &  \begin{bmatrix}
			7 & -5 & 0 & 0 & 0 \\
			98 & 0 & -5 & 0 & 0 \\
			49 & 0 & 0 & -20 & 0 \\
			196 & 0 & 0 & 0 & -5 \\
			0 & 14 & -1 & 0 & 0 \\
			0 & 7 & 0 & -4 & 0 \\
			0 & 28 & 0 & 0 & -1 \\
			0 & 0 & 1 & -8 & 0 \\
			0 & 0 & 2 & 0 & -1 \\
			0 & 0 & 0 & 16 & -1 \\    	    
		\end{bmatrix}
	\end{array}$
\end{center}

Applying the algorithm we find that

\begin{center}
	$\begin{array}{lcl}
		SNF(B) & = &  \begin{bmatrix}
			1 & 0 & 0 & 0 & 0 \\
			0 & 1 & 0 & 0 & 0 \\
			0 & 0 & 1 & 0 & 0 \\
			0 & 0 & 0 & 1 & 0 \\
			0 & 0 & 0 & 0 & 0 \\
			0 & 0 & 0 & 0 & 0 \\
			0 & 0 & 0 & 0 & 0 \\
			0 & 0 & 0 & 0 & 0 \\
			0 & 0 & 0 & 0 & 0 \\
			0 & 0 & 0 & 0 & 0 \\    	    
		\end{bmatrix}
	\end{array}$

\end{center}
Therefore we conclude that indeed ${N}\cong\Z$ and hence $(N_{3})_{\om}(G)=S^G$.
 \qed



\begin{thebibliography}{99}  

		\bibitem{BS} G.Baumslag and D.Solitar, Some two-generator one-relator non-Hopﬁan groups, \textit{Bull.Amer. Math. Soc.}, 68 (1962), 199--201
	
	\bibitem{Delga}  A. L. Delgado, D. J. S. Robinson, M. Timm, Cyclic normal subgroups of generalized Baumslag-Solitar groups,  \textit{Comm. Algebra} 45 (4) (2017), 1808--1818
	
	 
\bibitem{Fore} M. Forester,  Splittings of generalized Baumslag-Solitar groups, \textit{Geom. Dedicata}, Volume 121 (2006),  43-59 
		
	\bibitem{Gruen} K.W. Gruenberg, Residual properties of infinite soluble
	groups, {\it Proc. LMS} 7 (3) (1957), 29--62.	
	
	\bibitem{KMP} C.E. Kofinas, V. Metaftsis and A.I. Papistas, Baumslag-Solitar groups and residual nilpotence, \textit{Proceedings of the Edinburgh Math. Society}, 66 (2)  (2023) 532--547 
	  
	                                         
	\bibitem{LevittA}  G. Levitt, Quotients and subgroups of Baumslag-Solitar groups, \textit{J. Group Theory} 18 (1) (2015),	1--43.
	
	\bibitem{LevittB} G. Levitt, On the automorphism group of generalized Baumslag-Solitar groups, \textit{Geom.
		Topol.} 11 (2007), 473--515.
	
	
	\bibitem{McC} J. McCarron, Residually Nilpotent one-relator groups with nontrivial centre, {\it Proc. of the Am. Math. Soc.},  124 (1) (1996) 1--5
	
		\bibitem{Mold} D.I. Moldavanskii, The intersection of the subgroups of finite index
	in Baumslag–Solitar groups, {\it Mathematical Notes}, { 87} No. 1 (2010), 88--95.
	
	\bibitem{Mold-ru} D.I. Moldavanskii, The intersection of the subgroups of finite $p$-index in Baumslag–Solitar groups, {\it Vestnik Ivanovo State Univ. Ser. Natural, Social Sci.}, { 2} (2010) 106--111. (Russian)
	
	\bibitem{RV} E. Raptis, D. Varsos, The residual nilpotence of HNN-extensions with base group a finite or a f.g. abelian group,  {\it J. of Pure and Applied Algebra}  76 (1991), 167--178
	
	\bibitem{Robi} D. J. S. Robinson, Recent results on generalized Baumslag-Solitar groups, {\it Note Mat}. 30 (1) (2010), 37--53.
	
	\bibitem{RozSok} A. V. Rozov, E. V. Sokolov, On the residual nilpotence of free products of nilpotent groups with central amalgamated subgroups, {\it Vestn. Tomsk. Gos. Univ. Mat. Mekh.}, (2016), no. 6(44), 34--44

\bibitem{sok} E. V. Sokolov, Certain residual properties of generalized Baumslag-Solitar groups. {\it J. Algebra} 582 (2021), 1--25. 
	
	\bibitem{Varsos} D. Varsos, The residual nilpotence of the fundamental group of certain graphs of groups, {\it Houston J.Math.} (1996) 22, 233–-248 
	
	\bibitem{Wilkes} G. Wilkes, A criterion for residual $p$-finiteness of arbitrary graphs of finite $p$-groups. {\it Journal of Group Theory} 22 (2019): 837844.


\end{thebibliography}
\end{document}